\documentclass[a4paper,12pt]{article}
\title{{\bf Multiple cover formula of generalized DT invariants I:
parabolic stable pairs}}
\date{}
\author{Yukinobu Toda}

\usepackage{makeidx}

\usepackage{latexsym}
\usepackage{amscd}
\usepackage{amsmath}
\usepackage{amssymb}
\usepackage{amsthm}
\usepackage{float}
\usepackage[dvips]{graphicx}

\usepackage[all,ps,dvips]{xy}

\usepackage{array}
\usepackage{amscd}
\usepackage[all]{xy}
\usepackage{makeidx}
\usepackage{latexsym}
\DeclareFontFamily{U}{rsfs}{%
\skewchar\font127}
\DeclareFontShape{U}{rsfs}{m}{n}{%
<-6>rsfs5<6-8.5>rsfs7<8.5->rsfs10}{}
\DeclareSymbolFont{rsfs}{U}{rsfs}{m}{n}
\DeclareSymbolFontAlphabet
{\mathrsfs}{rsfs}
\DeclareRobustCommand*\rsfs{%
\@fontswitch\relax\mathrsfs}
\setlength{\oddsidemargin}{0cm}
\setlength{\evensidemargin}{0cm}
\setlength{\textwidth}{16cm}
\setlength{\textheight}{23cm}
\setlength{\topmargin}{-0.3cm}

\theoremstyle{plain}
\newtheorem{thm}{Theorem}[section]
\newtheorem{prop}[thm]{Proposition}
\newtheorem{lem}[thm]{Lemma}

\newtheorem{defi}[thm]{Definition}
\newtheorem{rmk}[thm]{Remark}
\newtheorem{cor}[thm]{Corollary}

\newtheorem{prop-defi}[thm]{Proposition-Definition}
\newtheorem{thm-defi}[thm]{Theorem-Definition}
\newtheorem{lem-defi}[thm]{Lemma-Definition}

\newtheorem{conj}[thm]{Conjecture}
\newtheorem{exam}[thm]{Example}

\newdimen\argwidth
\def\db[#1\db]{
 \setbox0=\hbox{$#1$}\argwidth=\wd0
 \setbox0=\hbox{$\left[\box0\right]$}
  \advance\argwidth by -\wd0
 \left[\kern.3\argwidth\box0 \kern.3\argwidth\right]}

\newcommand{\aA}{\mathcal{A}}

\newcommand{\eE}{\mathcal{E}}
\newcommand{\fF}{\mathcal{F}}

\newcommand{\hH}{\mathcal{H}}

\newcommand{\lL}{\mathcal{L}}
\newcommand{\mM}{\mathcal{M}}
\newcommand{\nN}{\mathcal{N}}
\newcommand{\oO}{\mathcal{O}}

\newcommand{\Supp}{\mathop{\rm Supp}\nolimits}
\newcommand{\Hom}{\mathop{\rm Hom}\nolimits}

\newcommand{\dR}{\mathbf{R}}

\newcommand{\Pic}{\mathop{\rm Pic}\nolimits}

\newcommand{\Chow}{\mathop{\rm Chow}\nolimits}

\newcommand{\id}{\textrm{id}}

\newcommand{\Ext}{\mathop{\rm Ext}\nolimits}
\newcommand{\Spec}{\mathop{\rm Spec}\nolimits}

\newcommand{\Coh}{\mathop{\rm Coh}\nolimits}

\newcommand{\divv}{\mathop{\rm div}\nolimits}

\newcommand{\cneq}{\mathrel{\raise.095ex\hbox{:}\mkern-4.2mu=}}
\newcommand{\eqcn}{\mathrel{=\mkern-4.5mu\raise.095ex\hbox{:}}}

\newcommand{\Cok}{\mathop{\rm Cok}\nolimits}

\newcommand{\Aut}{\mathop{\rm Aut}\nolimits}

\newcommand{\SL}{\mathop{\rm SL}\nolimits}

\newcommand{\DT}{\mathop{\rm DT}\nolimits}
\newcommand{\PT}{\mathop{\rm PT}\nolimits}

\newcommand{\Imm}{\mathop{\rm Im}\nolimits}

\newcommand{\Ker}{\mathop{\rm Ker}\nolimits}

\newcommand{\GL}{\mathop{\rm GL}\nolimits}

\newcommand{\cl}{\mathop{\rm cl}\nolimits}
\begin{document}
\maketitle
\begin{abstract}
In this paper, we introduce the notion 
of parabolic stable pairs on Calabi-Yau 
3-folds
and invariants counting them. 
By applying the wall-crossing formula
developed by Joyce-Song, Kontsevich-Soibelman, 
 we see that
they are related to 
generalized Donaldson-Thomas invariants 
counting one dimensional semistable sheaves
on Calabi-Yau 3-folds. 
Consequently,  
the conjectural multiple cover formula 
of generalized DT invariants is shown to be 
 equivalent 
to a certain product expansion formula of 
the generating series of parabolic 
stable pair invariants.
The application of this result 
to the multiple cover formula will be 
pursued in the subsequent paper. 
\end{abstract}
\section{Introduction}
The purpose of this paper is to introduce 
the notion of parabolic stable pairs, 
construct their moduli spaces and counting invariants. 
This is a new notion relevant 
to curve
counting invariants on Calabi-Yau 3-folds, 
and very similar to 
classical parabolic vector bundles on curves~\cite{MeSe}, 
coherent systems~\cite{LeP} and stable pairs~\cite{PT}, \cite{JS}. 
The wall-crossing formula established by Joyce-Song~\cite{JS} and
Kontsevich-Soibelman~\cite{K-S} are applied 
to the study of parabolic stable pairs.
Applying the wall-crossing formula, 
we see that 
the parabolic stable pair invariants
are related to generalized Donaldson-Thomas (DT)
invariants counting one dimensional semistable 
sheaves on Calabi-Yau 3-folds. 
On the other hand, the generalized DT invariants 
are expected to satisfy a certain multiple cover
formula, which is equivalent to 
the strong rationality conjecture 
of the generating series of Pandharipande-Thomas (PT)
invariants. 
The conjectural multiple cover formula 
is shown to be equivalent to 
a certain product expansion of the 
generating series of parabolic 
stable pair invariants. 
In a subsequent 
paper~\cite{Todmu}, by applying the results in the present paper, 
we will show 
the multiple cover formula for generalized DT invariants 
in some cases.  

\subsection{Motivation}
Let $X$ be a smooth 
projective Calabi-Yau 3-fold over $\mathbb{C}$, i.e. 
\begin{align*}
\bigwedge^3 T_X^{\vee} \cong \oO_X, \quad 
H^1(X, \oO_X)=0. 
\end{align*}
The notion of DT invariants is introduced in~\cite{Thom} 
in order to give a holomorphic analogue of 
Casson invariants on real 3-manifolds. 
They are integer valued counting invariants of
stable sheaves on a Calabi-Yau 3-fold $X$, 
and their rank one theory is conjectured to be
equivalent to Gromov-Witten theory~\cite{MNOP}. 
In recent years, the \textit{generalized DT invariants} 
counting not only stable sheaves but also semistable 
sheaves are introduced by Joyce-Song~\cite{JS}, 
Kontsevich-Soibelman~\cite{K-S}. 
They are $\mathbb{Q}$-valued, 
and their definition involves sophisticated techniques on 
motivic Hall algebras. 
It is very difficult to compute or study the generalized 
DT invariants from their definition. 

In this paper, we are interested in generalized
DT invariants counting one dimensional semistable sheaves
on $X$.   
Given data,  
\begin{align*}
n \in \mathbb{Z}, \quad \beta \in H_2(X, \mathbb{Z}),
\end{align*}
the generalized DT invariant is denoted by  
\begin{align}\label{intro:N}
N_{n, \beta} \in \mathbb{Q}. 
\end{align}
The invariant (\ref{intro:N}) counts 
one dimensional semistable sheaves $F$ on $X$ satisfying 
\begin{align*}
\chi(F)=n, \quad [F]=\beta. 
\end{align*}
If $\beta$ and $n$ are coprime, (e.g. $n=1$,)
then the invariant $N_{n, \beta}$ is an integer, and can be defined 
as a holomorphic Casson invariant as in~\cite{Thom}. 
However if $\beta$ and $n$ are not 
coprime, then the invariant (\ref{intro:N}) may not 
be an integer, and its definition requires 
techniques on Hall algebras. 
Roughly speaking, it is defined by the integration of the Behrend function~\cite{Beh}
on the `logarithm' of the moduli stack of semistable sheaves
in the Hall algebra. For the detail, see Definition~\ref{defi:Nn}. 

A particularly important case is when 
$n=1$. In this case, the invariant $N_{1, \beta}$
is nothing but Katz's definition of \textit{genus zero 
Gopakumar-Vafa invariant}~\cite{Katz}. 
In~\cite{JS}, \cite{Tsurvey}, the following 
conjecture is proposed: 
\begin{conj}
{\bf\cite[Conjecture~6.20]{JS}, 
\cite[Conjecture~6.3]{Tsurvey}}\label{conj:mult}
We have the following formula, 
\begin{align}\label{form:mult}
N_{n, \beta}=\sum_{k\ge 1, k|(n, \beta)}
\frac{1}{k^2}N_{1, \beta/k}. 
\end{align}
\end{conj}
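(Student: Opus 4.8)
The plan is to prove the formula by analysing the support morphism and establishing an Euler-characteristic independence, rather than by the Hall-algebra reduction to parabolic stable pairs used in the body of the paper. First I would reformulate (\ref{form:mult}) through its M\"obius-inverted ``BPS'' form: define invariants $\Omega_{n,\beta}$ by $N_{n,\beta}=\sum_{k\mid(n,\beta)}\frac{1}{k^2}\Omega_{n/k,\beta/k}$, so that, by a short induction on the divisibility of $(n,\beta)$, the conjecture (\ref{form:mult}) becomes the single assertion that $\Omega_{n,\beta}=N_{1,\beta}$, i.e.\ that $\Omega_{n,\beta}$ is independent of $n$. The target is therefore a $\chi$-independence statement for the generalized DT invariants of one dimensional sheaves.

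A first, elementary reduction is tensoring by line bundles: the operation $F\mapsto F\otimes L$ identifies the moduli stacks in classes $(n,\beta)$ and $(n+\beta\cdot c_1(L),\beta)$ and preserves the Behrend function, so $N_{n,\beta}=N_{n+\beta\cdot c_1(L),\beta}$ and hence $\Omega_{n,\beta}$ is periodic in $n$. Deformation invariance of $N_{n,\beta}$ then lets me replace $X$ and $\beta$ by convenient models. This already settles the primitive case, where $N_{n,\beta}=N_{1,\beta}$ is the integer-valued DT/Katz invariant and every term in (\ref{form:mult}) with $k>1$ is absent; the content is entirely in the imprimitive classes.

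For the imprimitive case I would use the Hilbert--Chow (support) morphism $\pi_{n,\beta}\colon M_{n,\beta}\to \Chow_{\beta}(X)$ sending a semistable sheaf to its support cycle, and express the Behrend-weighted count $N_{n,\beta}$ through the fibres of $\pi_{n,\beta}$. Over the open locus of reduced irreducible support the fibres are compactified Jacobians, and the expected mechanism is an Euler-characteristic (degree-$n$) independence of their weighted cohomology, in the spirit of the $\chi$-independence phenomena known for Hitchin-type systems. The sum $\sum_{k\mid(n,\beta)}$ should then be produced by stratifying $\Chow_{\beta}(X)$ according to divisibility of the support cycle: on the stratum where the cycle is $k$ times a cycle of class $\beta/k$, a semistable sheaf is assembled from sheaves of class $\beta/k$, and the factor $1/k^2$ arises from the induced symmetric-product structure on the base together with the central $\mathbb{G}_m$-stabiliser, each contributing $1/k$.

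The main obstacle is this third step: the fibres of $\pi_{n,\beta}$ are compactified Jacobians over arbitrarily singular, reducible and non-reduced curves on a general Calabi--Yau $3$-fold, and no decomposition theorem or $\chi$-independence result is available at this level of generality; moreover the $(-1)$-shifted symplectic (d-critical) structure of $M_{n,\beta}$ obstructs a naive purely cohomological computation, since the Behrend weighting need not factor through $\pi_{n,\beta}$. Controlling these weighted fibre integrals, and matching the symmetric-product contributions precisely to the coefficients $1/k^2$, is exactly the difficulty that the parabolic stable pair reformulation of this paper is designed to circumvent, and I expect any direct attack to require first restricting to geometries (local curves, local surfaces) where the support map and its fibres can be described explicitly.
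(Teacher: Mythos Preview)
The statement you are addressing is Conjecture~\ref{conj:mult}, and the paper does \emph{not} prove it. The paper's contribution is to show (Proposition~\ref{prop:mult:para}) that the conjecture is equivalent to a product expansion formula for the generating series of parabolic stable pair invariants, and then to reduce the global conjecture to its local version over a fixed reduced curve (Proposition~\ref{loc:global}); the local version is attacked, in special cases only, in the sequel~\cite{Todmu}. There is therefore no proof in the paper to compare your proposal against.

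Your proposal is likewise not a proof but a strategy outline, and you correctly identify the essential gap yourself: the $\chi$-independence of the Behrend-weighted fibre integrals of the support morphism over arbitrary (singular, reducible, non-reduced) curves in a general Calabi--Yau $3$-fold is not known, and the Behrend weighting need not factor through the support map. The first two steps --- the M\"obius reformulation in terms of $\Omega_{n,\beta}$ and the periodicity of $N_{n,\beta}$ in $n$ via tensoring by line bundles --- are correct and standard. The claim that periodicity plus deformation invariance ``already settles the primitive case'' is too quick: for primitive $\beta$ the image of $\beta\cdot c_1(-)\colon H^2(X,\mathbb{Z})\to\mathbb{Z}$ need not be all of $\mathbb{Z}$, so tensoring alone does not reach every residue class of $n$, and invoking deformation invariance to pass to a ``convenient model'' where this index is $1$ is not obviously possible for an arbitrary $(X,\beta)$. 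The third step is where all the content lies, and as you acknowledge no argument is available at this level of generality.

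In summary, your proposal is an honest description of a well-known heuristic approach to the conjecture rather than a proof; the paper's parabolic-pair reformulation is a genuinely different route, designed precisely to trade the intractable Hall-algebra definition of $N_{n,\beta}$ for an integer-valued invariant on a scheme to which localization techniques can be applied.
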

The formula (\ref{form:mult}) 
is called a \textit{multiple cover formula} of $N_{n, \beta}$. 
The motivation of the above conjecture is that 
the formula (\ref{form:mult}) is equivalent to 
Pandharipande-Thomas's conjecture~\cite[Conjecture~3.14]{PT} on 
the strong rationality of the generating series of
stable pair invariants. 
(See Subsection~\ref{subsec:conjectural} for some more discussions.)
So far the above conjecture is checked in 
few cases, e.g. $\beta$ is a multiple of 
a homology class of a $(-1, -1)$-curve. 
(cf.~\cite[Example~4.14]{JS}.)
In general the above conjecture seems to be 
difficult to solve, especially due to 
the technical difficulties of the definition of (\ref{intro:N}). 

In the present and the subsequent paper~\cite{Todmu}, 
we approach Conjecture~\ref{conj:mult} by the following ideas: 
\begin{itemize}
\item Introduce the notion of \textit{parabolic stable pairs}
and their counting invariants. They are integer valued and
we can relate them to the invariant $N_{n, \beta}$.  
Consequently the conjectural multiple cover formula (\ref{form:mult})
can be reduced to a certain 
formula relating parabolic stable pair invariants and 
the invariant $N_{1, \beta}$. 
\item There is also a local version of parabolic stable pair theory, and 
the local theory admits an action of the Jacobian group of 
the underlying curve. 
We study the formula (\ref{form:mult}) by the localization 
with respect to the 
actions of the Jacobian group to parabolic stable pair invariants. 
\end{itemize}
The second idea on the Jacobian localizations will be 
pursued in~\cite{Todmu}. 
The present paper is devoted to give a foundation on 
parabolic stable pairs.

\subsection{Parabolic stable pairs}
Recall that a parabolic structure on a 
vector bundle on a curve is given by 
a data of a filtration on some fiber 
together with a parabolic weight~\cite{MeSe}. 
We apply this idea to a one dimensional sheaf $F$ on  
a Calabi-Yau 3-fold $X$. In this case, 
we interpret the `fiber' as 
a tensor product 
of the sheaf $F$ with 
the structure sheaf of a fixed divisor $H$ inside $X$. 

Let $\oO_X(1)$ 
be an ample line bundle on $X$, and 
we set $\omega =c_1(\oO_X(1))$. 
We would like to take a divisor in $X$, 
\begin{align*}
H \in \lvert \oO_X(h) \rvert, \quad 
h \gg 0, 
\end{align*}
so that $H$ intersects with one dimensional 
sheaves $F$ we are interested in transversally. 
In fact if we fix $d \in \mathbb{Z}_{>0}$, then 
we can find $H$ so that 
any one cycle $C$ on 
$X$ with $\omega \cdot [C] \le d$
satisfies, (cf.~Lemma~\ref{lem:trans},)
\begin{align*}\dim H \cap C =0.
\end{align*}
Below we fix such $d$ and $H$. 
We introduce the notion of parabolic 
stable pairs on $X$ to be pairs,
\begin{align}\label{intro:para}
(F, s), \quad s \in F \otimes \oO_{H}, 
\end{align}
where $F$ is a pure one dimensional sheaf on $X$
with $\omega \cdot [F] \le d$. 
The above pair (\ref{intro:para}) should satisfy 
the following stability condition:
\begin{itemize}
\item The sheaf $F$ is an $\omega$-semistable sheaf. 
We denote
\begin{align*}
\mu_{\omega}(F) \cneq \frac{\chi(F)}{\omega \cdot [F]}.
\end{align*}
\item For any surjection $\pi \colon F \twoheadrightarrow F'$
with $\mu_{\omega}(F')=\mu_{\omega}(F)$, we have 
\begin{align*}
(\pi \otimes \oO_{H})(s) \neq 0.
\end{align*}
\end{itemize}

The reason we call a pair (\ref{intro:para}) 
as a parabolic stable pair is that it 
resembles both of some particular 
parabolic vector bundles on 
smooth projective curves~\cite{MeSe}, and
stable pairs studied by Pandharipande-Thomas~\cite{PT}, Joyce-Song~\cite{JS}, 
based on the earlier work of coherent systems by Le Potier~\cite{LeP}.    
See Figure~\ref{fig:one}
for a geometric picture, and 
Subsection~\ref{subsec:relation} for the discussion.

\begin{figure}[htbp]
 \begin{center}
  \includegraphics[width=70mm]{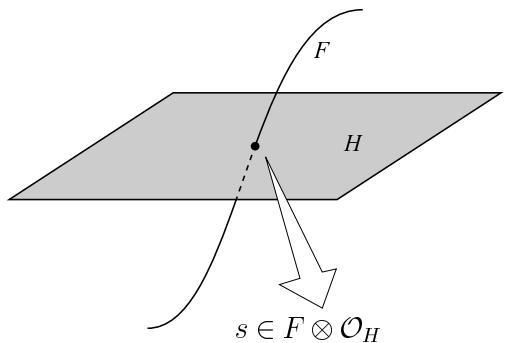}
 \end{center}
 \caption{Parabolic stable pair}
 \label{fig:one}
\end{figure}

Let \begin{align*}
\mM^{\rm{par}}_n(X, \beta) \colon 
\mathrm{Sch}/\mathbb{C} \to \mathrm{Set},
\end{align*}
be the moduli functor which 
assigns an $\mathbb{C}$-scheme $T$ 
to the set of flat families of parabolic
stable pairs $(F, s)$ over $T$ satisfying 
$[F]=\beta$ and $\chi(F)=n$. 
Our first result is the following:
\begin{thm}{\bf [Theorem~\ref{thm:moduli}]}
\label{intro:thmA}
The moduli functor $\mM^{\rm{par}}_n(X, \beta)$
is represented by a 
projective scheme of  
finite type over $\mathbb{C}$, 
denoted by $M_n^{\rm{par}}(X, \beta)$.
\end{thm}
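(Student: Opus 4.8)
The plan is to realize $M_n^{\rm{par}}(X,\beta)$ as a GIT quotient, closely following the construction of moduli of stable pairs (coherent systems) by Le Potier and the treatment in Pandharipande--Thomas and Joyce--Song, with the modification coming from the parabolic section living in $F\otimes\oO_H$ rather than in $H^0(F(m))$. The first step is boundedness: the underlying sheaves $F$ are pure one-dimensional, $\omega$-semistable with $[F]=\beta$ and $\chi(F)=n$, so by Grothendieck/Simpson they form a bounded family; hence there is $m\gg 0$ such that $F(m)$ is globally generated with vanishing higher cohomology and $h^0(F(m))=P(m)$ is a fixed polynomial $\chi(F(m))$ for all such $F$. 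Fixing a vector space $V$ with $\dim V=P(m)$, every such $F$ is a quotient $V\otimes\oO_X(-m)\twoheadrightarrow F$, parametrized by a locally closed subscheme of the Quot scheme $\Quot(V\otimes\oO_X(-m),P)$; this is projective and the $\omega$-semistable locus is open, and $\GL(V)$ acts with orbits corresponding to isomorphism classes.

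Next I would incorporate the parabolic data. Since $H\in|\oO_X(h)|$ is chosen (Lemma~\ref{lem:trans}) so that $\dim(H\cap C)=0$ for all relevant one-cycles, tensoring the universal quotient by $\oO_H$ and pushing forward gives a locally free sheaf on (an open part of) the Quot scheme whose fiber over $[V\otimes\oO_X(-m)\twoheadrightarrow F]$ is $F\otimes\oO_H$; the total space of this bundle is a quasi-projective scheme $\widetilde{Q}$ carrying a universal pair $(\fF,s)$ together with the induced $\GL(V)$-action. The parabolic stability condition — that $(\pi\otimes\oO_H)(s)\neq 0$ for every quotient $\pi\colon F\twoheadrightarrow F'$ with $\mu_\omega(F')=\mu_\omega(F)$ — is an open condition on $\widetilde Q$ (its failure is closed, being detected on the relative Quot scheme of $\fF$ along the finitely many destabilizing numerical types, which is proper). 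Call the open semistable-and-parabolic-stable locus $\widetilde Q^{\rm par}$; then $M_n^{\rm par}(X,\beta)$ should be the quotient $\widetilde Q^{\rm par}/\GL(V)$.

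To get a projective scheme I would set this up as a GIT quotient: linearize the $\GL(V)$-action on a projective completion using a Gieseker-type embedding of $\Quot$ (sending $F$ to $\bigwedge^{\rm something} H^0(F(m'))$ for a second large $m'$) twisted by the line bundle coming from the section data, with a rational parameter playing the role of the parabolic weight, exactly as Le Potier does for coherent systems. The key points are then: (i) identify, via the Hilbert--Mumford numerical criterion, the GIT-(semi)stable points with the parabolic stable pairs — here one-parameter subgroups correspond to weighted filtrations of $V$, hence to filtrations of $F$, and the section $s$ contributes a term from whether it survives in the associated graded, reproducing the condition $(\pi\otimes\oO_H)(s)\neq 0$; (ii) check that stability equals semistability for the chosen weight (so the quotient is geometric, a fine moduli space, hence represents the functor), using that parabolic stability as defined has no strictly semistable objects once the weight is generic small; and (iii) conclude properness/projectivity from GIT together with a valuative/Langton-type argument showing the parabolic stable pair functor is proper (given a family over a punctured curve, the limit sheaf $F_0$ exists by Langton and the limiting section extends because $\fF\otimes\oO_H$ is flat over the base).

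The main obstacle I expect is step (i)--(ii): matching the GIT numerical criterion with the parabolic stability condition and, in particular, pinning down the range of the parabolic weight for which GIT-semistability coincides with GIT-stability and with the intrinsic definition of parabolic stable pair given in the text (including the subtlety that $s$ lies in $F\otimes\oO_H$, a torsion sheaf supported on $H\cap\Supp F$, so the relevant Hilbert polynomial contribution of the section is a constant rather than a polynomial — this forces the weight to be taken in an appropriate asymptotic regime, much as the parameter $m$ is taken large for ordinary stable pairs). Boundedness of the parabolic-unstable locus (to know the bad locus is closed) and the valuative criterion are comparatively routine given Langton's theorem and the properness of Quot, so I would spend most of the effort on the GIT weight analysis and on verifying that the resulting quotient genuinely corepresents, and in fact represents, $\mM^{\rm par}_n(X,\beta)$.
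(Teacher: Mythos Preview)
Your plan is correct and matches the paper's approach essentially step for step: boundedness, Quot scheme with open semistable locus $U$, the vector bundle $R_U$ over $U$ with fiber $F\otimes\oO_H$, and a GIT quotient with the Hilbert--Mumford analysis (stable $=$ semistable $=$ parabolic stable) as the technical core. A few implementation details differ: the paper does not introduce a separate rational parabolic weight but instead embeds $\mathbb{P}(R_U)$ into a product of two Grassmannians (one encoding the section via a surjection $V\otimes K\otimes M\twoheadrightarrow (F\otimes\oO_H(m')\otimes M)/(F\otimes\oO_H)$, the other encoding the sheaf via the usual twist by $l\gg 0$), linearizes by $\oO_{G_1}(1)\boxtimes\oO_{G_2}(1)$, and lets $l\to\infty$ play the role of your asymptotic weight regime; openness of the parabolic stable locus is deduced from the GIT identification rather than argued directly via a relative Quot; and there is no separate Langton/valuative argument, since projectivity comes for free from GIT once stable $=$ semistable.
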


\subsection{Counting invariants of parabolic stable pairs}
By Theorem~\ref{intro:thmA},
we are able to define the integer valued
invariants counting
parabolic stable pairs,
\begin{align*}
\mathrm{DT}^{\rm{par}}_{n, \beta}
\cneq \int_{M_{n}^{\rm{par}}(X, \beta)}
\nu_M d\chi,
\end{align*}
where $\nu_M$ is Behrend's constructible function~\cite{Beh},
\begin{align*}
\nu_M \colon M_n^{\rm{par}}(X, \beta) \to \mathbb{Z}. 
\end{align*} 
For $\mu \in \mathbb{Q}$, the generating series 
of the invariants $\mathrm{DT}^{\mathrm{par}}_{n, \beta}$
is 
defined by 
\begin{align*}
\mathrm{DT}^{\rm{par}}(\mu, d) \cneq 
1+
\sum_{\begin{subarray}{c} 0<\beta \cdot \omega \le d \\
n/\omega \cdot \beta =\mu
\end{subarray}}
\DT_{n, \beta}^{\rm{par}}q^n t^{\beta}. 
\end{align*}
By applying the wall-crossing formula~\cite{JS}, \cite{K-S},
we can express the generating 
series $\mathrm{DT}^{\rm{par}}(\mu, d)$
in terms of $N_{n, \beta}$, where 
$N_{n, \beta}$ is the generalized DT invariant 
(\ref{intro:N}).  The equality of the 
generating series is described in the ring 
$\Lambda_{\le d}$, which is 
a quotient of $\Lambda$, 
\begin{align*}
\Lambda \cneq \bigoplus_{n\in \mathbb{Z}, \beta>0}
\mathbb{Q} q^n t^{\beta},
\end{align*}
by the ideal generated by 
$q^n t^{\beta}$ with $\beta \cdot \omega >d$. 
Here $\beta>0$ means that $\beta$ is a
homology class of an effective one cycle on $X$. 
\begin{thm}{\bf [Theorem~\ref{thm:DTpar}]}\label{thm:DTpar=N}
We have the following formula in $\Lambda_{\le d}$, 
\begin{align}\label{form:DTpar=N}
\mathrm{DT}^{\rm{par}}(\mu, d)
=\prod_{\begin{subarray}{c}
\beta>0, \\
 n/\omega \cdot \beta=\mu
\end{subarray}}
\exp\left( (-1)^{\beta \cdot H -1} 
N_{n, \beta}q^n t^{\beta}  \right)^{\beta \cdot H}. 
\end{align}
\end{thm}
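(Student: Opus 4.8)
The plan is to realize the left-hand side as an integration-map image of an explicit element of the motivic Hall algebra of coherent sheaves on $X$, and then to apply the Joyce--Song / Kontsevich--Soibelman wall-crossing machinery to rewrite that element in terms of the stack of one-dimensional semistable sheaves. First I would set up the relevant abelian category: following the coherent-systems/stable-pairs formalism, a parabolic stable pair $(F,s)$ with $s \in F\otimes\oO_H$ should be encoded as an object of an auxiliary abelian category $\aA_H$ built from $\Coh_{\le 1}(X)$ together with the functor $F\mapsto F\otimes\oO_H$ (equivalently, as a module over a one-point extension algebra, or as a pair in the sense of Joyce--Song with the "framing" vector space replaced by $H^0(\oO_H)$-type data). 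The key point, which I would verify using the transversality of $H$ from Lemma~\ref{lem:trans} (so that $F\otimes\oO_H$ is a length-$(\beta\cdot H)$ sheaf supported away from the singular locus and behaves like a genuine fibre), is that parabolic stability coincides with a Joyce--Song-type stability for these framed objects at the relevant slope $\mu$, so that $M_n^{\rm par}(X,\beta)$ is a fine moduli space of $\mu$-limit-stable framed objects and $\DT^{\rm par}_{n,\beta}$ is its Behrend-weighted Euler characteristic.

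Next I would run the standard wall-crossing argument. In the Hall algebra $H(\aA_H)$, the stack of all framed objects with fixed numerical class decomposes, via the Harder--Narasimhan/Joyce "no-pole" manipulations, into a product over contributions of the underlying sheaf $F$ (ranging over $\mu_\omega$-semistable sheaves of all subclasses of $\beta$ at the fixed slope $\mu$) and the choice of section $s$; the section contributes a factor governed by the $(\beta\cdot H)$-dimensional "fibre" $F\otimes\oO_H$, which is where the exponent $\beta\cdot H$ in \eqref{form:DTpar=N} will originate. Applying the integration morphism $I\colon H(\aA_H)\to \Lambda\otimes(\text{base ring})$ — which is a ring homomorphism only after twisting by the Behrend-function-corrected sign, hence the sign $(-1)^{\beta\cdot H-1}$ attached to each one-dimensional semistable-sheaf contribution — turns the multiplicative decomposition in the Hall algebra into the product of exponentials on the right-hand side, with $N_{n,\beta}$ appearing precisely because it is \emph{defined} (Definition~\ref{defi:Nn}) as the integration-map image of the logarithm of the semistable-sheaf stack. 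Working modulo classes with $\beta\cdot\omega>d$ keeps all sums finite and legitimizes every manipulation, so the identity holds in $\Lambda_{\le d}$.

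I would organize the write-up as: (1) identify $\aA_H$ and translate parabolic stability into a limit/Joyce--Song stability condition; (2) write the Hall-algebra identity relating the stack of parabolic stable pairs to the stack of semistable sheaves, using the auxiliary stability at a generic perturbation and letting it degenerate to $\mu$; (3) apply the integration map, tracking the Behrend-function sign contributions to produce $(-1)^{\beta\cdot H-1}$ and the framing multiplicity $\beta\cdot H$; (4) extract \eqref{form:DTpar=N} by comparing coefficients of $q^nt^\beta$ and taking logarithms. The main obstacle I expect is step (1) together with the sign bookkeeping in step (3): one must prove that the framing data $s\in F\otimes\oO_H$ genuinely behaves like a Joyce--Song framing of \emph{rank} $\beta\cdot H$ (so that the local model of $M_n^{\rm par}$ near a polystable $F$ is a projective space bundle of the expected dimension, contributing the sign $(-1)^{\beta\cdot H-1}$ and the power $\beta\cdot H$ after Behrend-weighted Euler-characteristic computation), and that the Behrend function is multiplicative along the relevant Hall-algebra product — this uses the results of Joyce--Song on multiplicativity of the Behrend function and the transversality of $H$ in an essential way.
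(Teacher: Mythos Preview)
Your proposal is correct and follows essentially the same route as the paper: the auxiliary category is the paper's $\aA(\mu,d)$ of pairs $N_{H/X}^{\oplus r}[-1]\to F$ with the weak slope $\widehat{\mu}_\alpha$, the Hall-algebra identity is obtained from the two-step Harder--Narasimhan filtrations on either side of the wall $\alpha=\mu$ (Proposition~\ref{prop:ident}), and the passage to numbers is via the Joyce--Song Lie algebra homomorphism $\Upsilon$ (not a ring homomorphism) together with the Baker--Campbell--Hausdorff formula, which is exactly where the factor $(-1)^{\beta\cdot H-1}(\beta\cdot H)$ emerges from the Euler pairing $\chi((0,\beta,n),(1,0,0))=\beta\cdot H$. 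The only caveats relative to your sketch are that $\aA(\mu,d)$ is \emph{not} quite abelian (extensions are only defined when the total degree stays $\le d$, which is harmless in $\Lambda_{\le d}$), and that the integration map is a Lie-algebra map on virtual indecomposables rather than an algebra map, so the exponential on the right arises from the $\mathrm{Ad}$-exponential $\exp(\mathfrak{E})\ast\delta_{1,0,0}\ast\exp(\mathfrak{E})^{-1}$ rather than from multiplicativity.
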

As a corollary, 
Conjecture~\ref{conj:mult}
can be translated into a property on 
the generating series of the invariants of 
parabolic stable pairs. 
\begin{cor}{\bf [Proposition~\ref{prop:mult:para}]}
The formula (\ref{form:mult}) holds for any $(n, \beta)$
with $\beta \cdot \omega \le d$
and $n/\beta \cdot \omega =\mu$ if and only if 
the following formula holds in $\Lambda_{\le d}$, 
\begin{align}\label{cor:form}
\mathrm{DT}^{\rm{par}}(\mu, d)
=\prod_{\begin{subarray}{c}
\beta>0, \\
 n/\omega \cdot \beta=\mu
\end{subarray}}
\left(1-(-1)^{\beta \cdot H} q^n t^{\beta}\right)^{(\beta \cdot H)N_{1, \beta}}
\end{align}
\end{cor}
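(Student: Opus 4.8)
The plan is to deduce this corollary directly from Theorem~\ref{thm:DTpar=N} by comparing the product expansion (\ref{form:DTpar=N}) with the target expression (\ref{cor:form}), using the fact that the multiple cover formula (\ref{form:mult}) is precisely the statement that converts the ``$\exp$ of a sum of $N_{n,\beta}$'' shape into the ``product of $\bigl(1-(-1)^{\beta\cdot H}q^nt^\beta\bigr)$ to a power'' shape. First I would fix $\mu\in\mathbb{Q}$ and work entirely in $\Lambda_{\le d}$, so that all sums and products over $\beta>0$ with $n/\omega\cdot\beta=\mu$ are finite modulo the ideal generated by $q^nt^\beta$ with $\beta\cdot\omega>d$; this makes every manipulation with $\log$ and $\exp$ legitimate.

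The key computation is the following identity of formal power series: for a single class $\beta$ (with $n=\mu\,\omega\cdot\beta$), one has
\begin{align*}
\bigl(1-(-1)^{\beta\cdot H}q^nt^\beta\bigr)^{(\beta\cdot H)N_{1,\beta}}
=\exp\left(-(\beta\cdot H)N_{1,\beta}\sum_{k\ge1}\frac{(-1)^{k(\beta\cdot H)}}{k}q^{kn}t^{k\beta}\right),
\end{align*}
using $\log(1-x)=-\sum_{k\ge1}x^k/k$ and $\bigl((-1)^{\beta\cdot H}\bigr)^k=(-1)^{k\beta\cdot H}$. Taking the product over all $\beta>0$ with $n/\omega\cdot\beta=\mu$ and collecting, for a fixed class $\beta'$, all contributions coming from classes $\beta=\beta'/k$ with $k\mid\beta'$ and $k\mid n'$ (where $n'=\mu\,\omega\cdot\beta'$), the coefficient of $q^{n'}t^{\beta'}$ inside the exponential becomes
\begin{align*}
-\sum_{k\ge1,\,k|(n',\beta')}\frac{(-1)^{k(\beta'/k)\cdot H}}{k}(\beta'/k\cdot H)N_{1,\beta'/k}
=-\frac{(-1)^{\beta'\cdot H}}{k}\cdot\frac{\beta'\cdot H}{k^2}\cdot k\, N_{1,\beta'/k},
\end{align*}
where I use $(\beta'/k)\cdot H=(\beta'\cdot H)/k$ and the parity relation $k(\beta'/k)\cdot H\equiv \beta'\cdot H\ \ (\mathrm{mod}\ 2)$. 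Hence (\ref{cor:form}) is equivalent to
\begin{align*}
\mathrm{DT}^{\rm{par}}(\mu,d)=\prod_{\begin{subarray}{c}\beta>0,\\ n/\omega\cdot\beta=\mu\end{subarray}}\exp\left((-1)^{\beta\cdot H-1}(\beta\cdot H)\left(\sum_{k\ge1,\,k|(n,\beta)}\frac{1}{k^2}N_{1,\beta/k}\right)q^nt^\beta\right).
\end{align*}

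Comparing this with Theorem~\ref{thm:DTpar=N}, which gives
$\mathrm{DT}^{\rm{par}}(\mu,d)=\prod\exp\bigl((-1)^{\beta\cdot H-1}N_{n,\beta}q^nt^\beta\bigr)^{\beta\cdot H}$,
we see that (\ref{cor:form}) holds if and only if, for every $(n,\beta)$ with $\beta\cdot\omega\le d$ and $n/\omega\cdot\beta=\mu$, the exponents agree, i.e.
$(\beta\cdot H)N_{n,\beta}=(\beta\cdot H)\sum_{k|(n,\beta)}k^{-2}N_{1,\beta/k}$, which (since $\beta\cdot H=h\,\omega\cdot\beta>0$) is exactly (\ref{form:mult}). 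To make the ``if and only if'' rigorous one argues by induction on $\beta\cdot\omega$: taking logarithms of both product formulas and comparing coefficients of $q^nt^\beta$ in $\Lambda_{\le d}$, the coefficient of $q^nt^\beta$ on each side is $(\beta\cdot H)$ times $N_{n,\beta}$ (resp.\ the right-hand side of (\ref{form:mult})) plus a universal expression in the $N_{1,\beta'}$ with $\beta'\cdot\omega<\beta\cdot\omega$, so (\ref{form:mult}) in this degree is equivalent to equality of the two degree-$(n,\beta)$ coefficients given that it holds in all lower degrees.

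The main obstacle is purely bookkeeping: one must be careful that the rearrangement of the infinite product over $\beta$ into a product indexed by $\beta'$ with inner sum over divisors $k$ is valid in $\Lambda_{\le d}$ (it is, since only finitely many terms survive), and one must track the sign $(-1)^{\beta\cdot H}$ through the substitution $\beta\mapsto k\beta$, which is where the parity identity $k(\beta\cdot H)\equiv k\beta\cdot H$ trivially holds but must be stated so that the sign in (\ref{cor:form}) comes out as $(-1)^{\beta'\cdot H}$ rather than $(-1)^{k\beta'\cdot H}$. No deep input beyond Theorem~\ref{thm:DTpar=N} is needed.
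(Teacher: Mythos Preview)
Your approach is exactly the paper's: take logarithms of both product expressions (the one from Theorem~\ref{thm:DTpar=N} and the target (\ref{cor:form})), expand $\log(1-x)$ and reindex by divisors, then compare coefficients of $q^nt^\beta$. One small correction: the final induction on $\beta\cdot\omega$ is unnecessary and the claim about a residual ``universal expression in lower degrees'' is not accurate---since the logarithm of $\prod_\beta\exp(\cdots)^{\beta\cdot H}$ is \emph{exactly} $\sum_\beta(-1)^{\beta\cdot H-1}(\beta\cdot H)N_{n,\beta}q^nt^\beta$ with no cross terms (and similarly for the other side), equality of the two products is directly equivalent to equality of every coefficient, i.e.\ to (\ref{form:mult}) for each $(n,\beta)$ in range, with no induction needed. (Also, your displayed coefficient computation has a notational slip: the left side is a sum over $k$ while the right side is a single term---you mean that each summand simplifies as shown.)
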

Note that the equality (\ref{cor:form})
is a relationship between $\mathbb{Z}$-valued invariants. 
There is also a local version of local parabolic 
stable pair theory and the result corresponding to 
Theorem~\ref{thm:DTpar=N}. The detail will be 
discussed in Section~\ref{sec:mult}.

\subsection{Relation to existing works}
The notion of parabolic structures on 
vector bundles on curves
 is introduced by Mehta and Seshadri~\cite{MeSe}. 
It consists of a vector bundle $F$ on a
curve $C$ and a filtration of some fiber of 
$F \to C$, satisfying some stability condition. 
Since its introduction, several generalizations 
have been discussed~\cite{MY}, \cite{BY}.
In~\cite{MY}, Maruyama and Yokogawa
generalize the notion of parabolic 
structures on vector bundles on curves
 to torsion free sheaves on 
arbitrary smooth projective variety, 
and construct their coarse moduli spaces.
The result of Theorem~\ref{intro:thmA}  
is interpreted as a version of 
their result for torsion one dimensional sheaves. 
In~\cite{BY}, Boden and Yokogawa
studies parabolic Higgs bundles over smooth 
projective curves. Since a Higgs bundle is 
interpreted as a torsion sheaf on the total 
space of the canonical line bundle on a curve, 
our work is also interpreted as a 
3-fold version of parabolic Higgs bundles.  
The setting of the 
above works are more general than ours 
in the sense that 
they work under an arbitrary
choice of a filtration and a 
parabolic weight which determine parabolic
structures.  We stick to our 
situation, 
 corresponding to a
 specific choice of a filtration type and 
a parabolic weight,
 as we will not need other choices. 
(See Subsection~\ref{subsec:relation}.)
A complete generalization may be pursued elsewhere.

The idea of parabolic stable pair theory and 
the result of Theorem~\ref{thm:DTpar=N} are very 
similar to Joyce-Song's stable pair theory. 
In~\cite{JS}, Joyce-Song study stable pairs of the form, 
\begin{align}\label{JS:pair}
\oO_X(-n) \to F, 
\end{align}
where $F$ is a coherent sheaf on $X$ and $n\gg 0$, satisfying 
a certain stability condition. Then 
Joyce-Song's stable pair invariants 
are shown to be related to their generalized DT invariants, 
in a way very similar to the formula (\ref{form:DTpar=N}). 
 However there is an advantage of 
parabolic stable pair theory. 
First we note that, 
for each fixed reduced curve $C\subset X$, 
 there is also a 
local version of 
parabolic stable pair invariants, and 
the local version of
the formula (\ref{cor:form}) which 
is enough to show the global formula (\ref{cor:form}).
(See Proposition~\ref{loc:global}.)
The local parabolic stable pair theory admits 
an action of the 
Jacobian group $\Pic^{0}(C)$, 
while the local 
stable pair theory (\ref{JS:pair}) does not. 
Hence we can try to localize by this action. 
It is much 
easier to apply the Jacobian localization 
to the invariants $\DT^{\rm{par}}_{n, \beta}$
rather than $N_{n, \beta}$, as the definition of the 
latter invariants 
involve very complicated techniques on Hall algebras. 
The definition of the invariant $\DT^{\rm{par}}_{n, \beta}$
is much more elementary, and there is no technical 
problem in applying the localization to that invariant. 
This idea works well at least when $C$ has at worst
nodal singularities, and the details will be 
pursued in the subsequent paper~\cite{Todmu}. 
(See Remark~\ref{rmk:final}.)

\subsection{Plan of the paper}
The organization of the paper is as follows. 
In Section~\ref{sec:para},
we introduce parabolic stable pairs and 
construct their moduli spaces. In Section~\ref{sec:wall},
we discuss categorical framework to discuss
parabolic stable pairs, and 
the wall-crossing formula to show 
Theorem~\ref{thm:DTpar=N}. 
In Section~\ref{sec:mult},  
we discuss a relationship
between parabolic stable pair invariants and 
the conjectural multiple cover formula
of generalized DT invariants.

\subsection{Acknowledgement}
The author is grateful to Richard Thomas and 
Jacopo Stoppa for valuable discussions on 
the subject of this paper. 
The author would like to thank the 
Isaac Newton Institute and its program 
`Moduli Spaces', 
during which a part of this work was done. 
This work is supported by World Premier 
International Research Center Initiative
(WPI initiative), MEXT, Japan. This work is also supported by Grant-in Aid
for Scientific Research grant (22684002), 
and partly (S-19104002),
from the Ministry of Education, Culture,
Sports, Science and Technology, Japan.

\section{Moduli spaces of parabolic stable pairs}\label{sec:para}
In this section, we introduce the notion of parabolic 
stable pairs and study their moduli spaces. 
In what follows, $X$ is a smooth projective Calabi-Yau 
3-fold over $\mathbb{C}$, 
\begin{align*}
\bigwedge^{3}T_{X}^{\vee} \cong \oO_X, \quad 
H^1(X, \oO_X)=0. 
\end{align*}
We fix an ample
line bundle $\oO_X(1)$ on $X$
and set $\omega=c_1(\oO_X(1))$. 

\subsection{Semistable sheaves}
First we recall the notion of $\omega$-semistable 
one dimensional sheaves on $X$. 
We set 
\begin{align*}
\Coh_{\le 1}(X) \cneq \{ F \in \Coh(X) : 
\dim \Supp(F) \le 1 \}. 
\end{align*}
For an object $F \in \Coh_{\le 1}(X)$, 
its slope is defined by 
\begin{align*}
\mu_{\omega}(F) \cneq \frac{\chi(F)}{[F] \cdot \omega}. 
\end{align*}
Here $\chi(F)$ is the holomorphic Euler characteristic 
of $F$ and $[F]$ is the fundamental one cycle associated to 
$F$, defined by 
\begin{align}\label{onecycle}
[F] = \sum_{\eta} (\mathrm{length}_{\oO_{X, \eta}}
F) \overline{\{\eta \}}. 
\end{align}
In the above sum, $\eta$ runs all the 
 codimension 
two points in $X$.
 If $[F] \cdot \omega =0$, i.e. $F$ is a zero dimensional 
sheaf, then $\mu_{\omega}(F)$ is defined to be $\infty$. 
\begin{defi}
An object $F \in \Coh_{\le 1}(X)$ is 
$\omega$-(semi)stable if for any 
subsheaf $F' \subset F$, we have the inequality, 
\begin{align*}
\mu_{\omega}(F') <(\le) \mu_{\omega}(F). 
\end{align*}
\end{defi}
Note that any 
one dimensional semistable sheaf $F$ is 
pure, i.e. there is no zero dimensional 
subsheaf in $F$.

\subsection{Definition of parabolic stable pairs}
Let $X$ be a Calabi-Yau 3-fold and $\omega=c_1(\oO_X(1))$ 
as in the previous 
subsection. 
In this subsection, 
we also fix another divisor,
\begin{align}\label{divisor}
H \subset X. 
\end{align}
We introduce the notion 
of parabolic stable pairs to
be pairs of 
one dimensional sheaf $F$ together 
with a data 
\begin{align}\label{data}
s \in F \otimes_{\oO_X} \oO_{H},
\end{align}
satisfying a certain stability condition. 
In what follows, we write $F \otimes_{\oO_X} \oO_H$
as $F\otimes \oO_H$ for simplicity.

In order to 
make the notation (\ref{data}) well-defined, 
 we 
need a transversality condition of the 
support of $F$. 
Namely for a one cycle $C$ on $X$, 
we say $C$ \textit{intersects with $H$ transversally} 
if $H\cap C$ is zero dimensional, or 
equivalently, any irreducible component $C'\subset C$
is not contained in $H$. 
(We allow the multiplicity of $H \cap C$.)
For  
a one dimensional sheaf $F$ on $X$, 
we say that $F$ \textit{intersects with $H$ 
transversally} if 
the one cycle $[F]$
given by (\ref{onecycle}) intersects with 
$H$ transversally. 
Then $F \otimes \oO_{H}$ is a zero dimensional 
sheaf on $X$, and we identify 
$F \otimes \oO_{H}$ with its global 
section $\Gamma(X, F \otimes \oO_{H})$
as a finite dimensional $\mathbb{C}$-vector space. 
 
\begin{defi}\label{defi:para}
For a fixed divisor $H$ on $X$ as above, 
a parabolic stable pair is defined to be a pair 
\begin{align}\label{para:pair}
(F, s), \quad s \in F \otimes \oO_{H},
\end{align}
such that the following conditions are satisfied. 
\begin{itemize}
\item The sheaf $F$ is a one dimensional 
$\omega$-semistable sheaf on $X$. 
\item The one cycle $[F]$ intersects with $H$ transversally. 
\item For any surjection $F \stackrel{\pi}{\twoheadrightarrow} F'$
with $\mu_{\omega}(F)=\mu_{\omega}(F')$, we have 
\begin{align*}
(\pi \otimes \oO_{H})(s) \neq 0. 
\end{align*}
\end{itemize}
\end{defi} 
For two parabolic stable pairs, 
\begin{align*}
(F_i, s_i), \quad i=1, 2, 
\end{align*}
we say they are \textit{isomorphic} if there is 
an isomorphism of sheaves,
\begin{align*}
g \colon F_1 \stackrel{\sim}{\to} F_2,
\end{align*}
satisfying $(g\otimes \oO_H)(s_1) =s_2$. 
In the following local $(-1, -1)$-curve example, 
the isomorphism classes of parabolic 
stable pairs can be easily 
classified. 
\begin{exam}\label{exam:-1}
Let $f \colon X \to Y$ be a 
birational contraction which contracts a 
$(-1, -1)$-curve $C \subset X$, i.e. 
\begin{align*}
C \cong \mathbb{P}^1, \quad 
N_{C/X} \cong \oO_C(-1) \oplus \oO_C(-1). 
\end{align*}
Suppose that a divisor $H \subset X$ intersects with 
$C$ at a one point $p\in C$. 
Let $(F, s)$ be a parabolic stable pair 
on $X$ with $F$ supported on $C$. 
It is easy to see that 
any semistable sheaf on $X$ supported on $C$
is an $\oO_{C}$-module, hence we have 
\begin{align*}
F \cong \oO_{C}(a)^{\oplus r},
\end{align*}
for some $a \in \mathbb{Z}$ and $r\in \mathbb{Z}_{\ge 1}$. 
By applying an action of 
$\Aut(F) \cong \mathbb{\GL}(r, \mathbb{C})$, 
we 
may assume that $s \in F \otimes \oO_{H} \cong \oO_p(a)^{\oplus r}$
 is of the form, 
\begin{align*}
s=(s_1, 0, \cdots, 0) \in \oO_{p}(a) \oplus \cdots \oplus \oO_{p}(a), 
\end{align*}
with $s_1 \neq 0$. 
However if $r \ge 2$, then the surjection 
\begin{align*}
F \cong \oO_C(a)^{\oplus r} \ni (x_1, \cdots, x_r) 
\mapsto x_r \in \oO_C(a), 
\end{align*}
violates the third condition of Definition~\ref{defi:para}. 
Therefore parabolic stable pairs supported on $C$
are given by 
\begin{align*}
(\oO_C(a), s), \ a \in \mathbb{Z}, \
s\in \oO_{p}(a) \setminus \{0\}. 
\end{align*}
\end{exam}
The next example shows that there is a 
parabolic stable pair $(F, s)$
with $F$ strictly $\omega$-semistable. 
\begin{exam}
Let $i\colon C \hookrightarrow X$ be an 
irreducible curve 
whose arithmetic genus is one, i.e. 
\begin{align*}
H^1(C, \oO_C)=\mathbb{C}. 
\end{align*}
Suppose that there is a 
divisor $H \subset X$ which 
intersects with $C$ at a one point 
$p \in C$. 
There is a non-trivial
exact sequence of sheaves on $C$, 
\begin{align*}
0 \to \oO_C \stackrel{j_1}{\to} E \stackrel{j_2}{\to} \oO_C \to 0. 
\end{align*}
By taking the pushforward $i_{\ast}$
and the tensor product with $\oO_{H}$, 
we obtain the exact sequence of vector spaces, 
\begin{align*}
0 \to \oO_p \stackrel{j_{1, p}}{\to}
 i_{\ast}E \otimes \oO_H \stackrel{j_{2, p}}{\to} \oO_p \to 0. 
\end{align*}
Suppose that $s\in i_{\ast}E \otimes \oO_H$ satisfies 
$j_{2, p}(s)\neq 0$. Then it is easy to see that the pair 
\begin{align*}
(i_{\ast}E, s), 
\end{align*}
is a parabolic stable pair. 
Note that $i_{\ast}E$ is $\omega$-semistable but 
not $\omega$-stable. 
\end{exam}

\subsection{Relation to parabolic vector bundles, 
PT stable pairs}\label{subsec:relation}
The notion of parabolic stable pairs resembles 
particular parabolic vector bundles on 
smooth projective curves~\cite{MeSe}.
Recall that for a smooth projective curve $C$
 and a vector bundle 
$E$ on $C$, a quasi-parabolic structure 
on $E$ at $p\in C$ is given by a filtration, 
\begin{align}\label{filt}
0=F_0 \subset F_1 \subset \cdots \subset F_n =E_{p}, 
\end{align}
where $E_p$ is the fiber of $E \to C$ at 
$p$. A parabolic structure is given by choosing 
a parabolic weight, 
\begin{align*}
0<\alpha_{n}<\alpha_{n-1}< \cdots <\alpha_{1}<1, 
\end{align*}
giving a stability condition on 
quasi-parabolic vector bundles. 

Suppose that the curve $C$ is embedded into 
a Calabi-Yau 3-fold $X$
by $i \colon C \hookrightarrow X$, and 
a divisor $H \subset X$ 
scheme theoretically 
intersects with $C$ at a point 
$p \in C$. 
Then we have the isomorphism 
of $\mathbb{C}$-vector spaces, 
\begin{align*}
i_{\ast}E \otimes \oO_H \cong E_p, 
\end{align*}
 and 
 $s \in i_{\ast}E \otimes \oO_{H}$
determines a one dimensional subspace 
in $E_{p}$. 
Thus a parabolic stable pair 
\begin{align*}
(i_{\ast}E, s), \quad s \in i_{\ast}E \otimes \oO_{H},
\end{align*} determines 
a filtration (\ref{filt})
with $F_1$ one dimensional
and $F_2=E_{p}$. 
The stability condition 
in Definition~\ref{defi:para} corresponds to a choice 
of a parabolic weight $0<\alpha_2<\alpha_1 \ll 1$. 

On the other hand, if $F$ is a pure one dimensional 
sheaf whose support intersects with $H$ transversally, 
a pair (\ref{para:pair}) 
is also interpreted to be a pair, 
\begin{align}\label{pair:N}
(F, s), \quad 
N_{H/X}[-1] \stackrel{s}{\to} F. 
\end{align}
Here $N_{H/X}$ is the normal bundle of $H$ 
to $X$,
and $[-1]$ is a $(-1)$-shift in the derived category 
of coherent sheaves on $X$. 
The above observation follows from the following lemma. 
\begin{lem}\label{lem:Ext}
Let $F$ be a pure one dimensional sheaf 
whose support intersects with $H$ 
transversally. Then we have the canonical 
isomorphisms, 
\begin{align*}
\Ext_{X}^{i}(N_{H/X}, F)
\cong \left\{ \begin{array}{cc}
F\otimes \oO_{H}, & i = 1, \\
0, & i\neq 1. 
\end{array}  \right.
\end{align*}
\end{lem}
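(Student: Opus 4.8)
The plan is to compute $\Ext^i_X(N_{H/X}, F)$ by resolving the source sheaf via the inclusion $H \hookrightarrow X$. Since $H$ is a divisor, $\oO_H$ has the Koszul resolution
\begin{align*}
0 \to \oO_X(-H) \to \oO_X \to \oO_H \to 0,
\end{align*}
and tensoring with the line bundle $\oO_X(H)$ gives a two-term locally free resolution of $N_{H/X} \cong \oO_H(H) = \oO_X(H)|_H$, namely
\begin{align*}
0 \to \oO_X \to \oO_X(H) \to N_{H/X} \to 0.
\end{align*}
First I would apply $\Hom_X(-, F)$ to this sequence to obtain a long exact sequence relating $\Ext^i_X(N_{H/X}, F)$ to $\Ext^i_X(\oO_X(H), F) = H^i(X, F(-H))$ and $\Ext^i_X(\oO_X, F) = H^i(X, F)$, together with the connecting map $H^i(X, F) \to H^i(X, F(H))$ induced by the section cutting out $H$.

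The key input is the transversality hypothesis: since $\Supp F$ meets $H$ in dimension zero, the multiplication map $\oO_X(-H) \otimes F \to F$ is injective (no associated point of the pure sheaf $F$ lies on $H$), so that tensoring the Koszul sequence with $F$ stays exact and yields
\begin{align*}
0 \to F(-H) \to F \to F\otimes \oO_H \to 0.
\end{align*}
Twisting this by $\oO_X(H)$ gives $0 \to F \to F(H) \to F\otimes\oO_H \to 0$ (the zero-dimensional sheaf $F\otimes\oO_H$ is unaffected by the twist). Thus the cokernel of $H^i(F) \to H^i(F(H))$ and the kernel on the next degree are governed by $H^i(X, F\otimes\oO_H)$, which vanishes for $i \geq 1$ because $F\otimes\oO_H$ has zero-dimensional support. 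Chasing the two long exact sequences — the one from applying $\Hom_X(-,F)$ to the resolution of $N_{H/X}$, and the cohomology sequence of $0 \to F \to F(H) \to F\otimes\oO_H \to 0$ — then forces $\Ext^i_X(N_{H/X}, F) \cong H^{i-1}(X, F\otimes\oO_H)$, which is $F\otimes\oO_H$ for $i=1$ and zero otherwise, with the identification being canonical since every map in sight is.

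The main obstacle, and the place requiring genuine care rather than bookkeeping, is establishing the injectivity of $F(-H) \to F$ and more generally verifying that the transversality condition $\dim(H \cap \Supp F) = 0$ is exactly what makes $\TOR^X_j(N_{H/X}, F) = 0$ for $j > 0$, so that the derived tensor product collapses to the ordinary one and the Koszul computation is valid. This is where purity of $F$ enters: a pure one-dimensional sheaf has no embedded or zero-dimensional associated points, so no associated prime of $F$ is contained in $H$, hence the local equation of $H$ is a nonzerodivisor on $F$ at every point. Once this Tor-vanishing is in hand, the rest is a routine diagram chase through the two long exact sequences, and the canonicity of the isomorphism follows because no choices beyond the defining section of $\oO_X(H)$ are involved.
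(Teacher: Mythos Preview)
Your approach is correct and uses the same ingredients as the paper's proof: the two-term resolution $0 \to \oO_X \to \oO_X(H) \to N_{H/X} \to 0$, the injectivity of $F(-H) \to F$ coming from purity plus transversality, and the zero-dimensionality of $F\otimes\oO_H$. The paper organizes these via the local-to-global spectral sequence (showing $\eE xt^q_X(N_{H/X},F)$ vanishes for $q\neq 1$ and equals the zero-dimensional sheaf $F\otimes\oO_H$ for $q=1$, whence immediate degeneration), whereas you chase two global long exact sequences directly; this is a cosmetic difference.

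One small correction: the twist by $\oO_X(H)$ is unnecessary and creates a mismatch. Applying $\Hom_X(-,F)$ to the resolution gives maps $H^i(X,F(-H)) \to H^i(X,F)$, not $H^i(X,F)\to H^i(X,F(H))$; to compare, you should use the \emph{untwisted} sequence $0 \to F(-H)\to F \to F\otimes\oO_H \to 0$, whose cohomology long exact sequence contains exactly the same map $H^i(X,F(-H)) \to H^i(X,F)$. With that fix the diagram chase goes through as you describe.
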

\begin{proof}
We have the following local to global 
spectral sequence, 
\begin{align}\label{Epq}
E_2^{p, q} \cneq 
H^p(X, \eE xt_{X}^{q}(N_{H/X}, F)) \Rightarrow \Ext_{X}^{i}(N_{H/X}, F). 
\end{align}
By the exact sequence, 
\begin{align*}
0 \to \oO_{X} \to \oO_X(H) \to N_{H/X} \to 0, 
\end{align*}
we
have $\eE xt_X^i(N_{H/X}, F)=0$ for $i\neq 0, 1$ and 
the exact sequence, 
\begin{align*}
0 \to \hH om(N_{H/X}, F) \to F(-H) \stackrel{\tau}{\to}F\to 
\eE xt^1_{X}(N_{H/X}, F) \to 0. 
\end{align*}
By the transversality assumption, the map $\tau$ is an
isomorphism in dimension one. Also since $F$ is pure, 
the morphism $\tau$ is injective, hence we have
\begin{align*}
\hH om(N_{H/X}, F)=0, \quad \eE xt^1_X(N_{H/X}, F)
\cong F\otimes \oO_H.
\end{align*}
Since $F\otimes \oO_{H}$ is a zero dimensional sheaf,  
the spectral sequence (\ref{Epq}) degenerates 
and the assertion holds. 
\end{proof}
\begin{rmk}
Note that 
we have the canonical isomorphism,
\begin{align*}
N_{H/X} \stackrel{\sim}{\to} \oO_H(H),
\end{align*}
 since 
$H$ is a divisor. 
By taking a trivialization of $\oO_H(H)$ 
near the intersection $\Supp(F) \cap H$, 
the pair (\ref{pair:N}) is also interpreted to be a pair, 
\begin{align}\label{pair:N2}
(F, s), \quad \oO_{H}[-1] \stackrel{s}{\to} F.
\end{align}
However the correspondence between (\ref{para:pair})
and (\ref{pair:N2}) is not canonical, so
we interpret (\ref{para:pair}) as a pair (\ref{pair:N}), not (\ref{pair:N2}). 
\end{rmk}

The pair (\ref{pair:N})
also resembles 
stable pairs discussed in~\cite{PT}, \cite{JS}, 
or more generally 
coherent systems~\cite{LeP}. 
For instance, a PT stable pair~\cite{PT}
consists of a pair, 
\begin{align}\label{PT/pair}
\oO_X \stackrel{s}{\to} F, 
\end{align} 
where $F$ is a pure one dimensional sheaf and $s$
is surjective in dimension one. 
Our pair (\ref{pair:N}) replaces $\oO_X$
by $N_{H/X}[-1]$. 
(But the stability conditions are different.)

In subsection~\ref{subsec:category}, 
we will interpret the stability condition in 
Definition~\ref{defi:para} in terms of 
a stability condition in the category of pairs
of the form
\begin{align}\label{pair:r}
N_{H/X}^{\oplus r}[-1] \to F, 
\end{align}
and proceed the arguments as given in~\cite[Section~13]{JS}. 
In fact using the description of 
parabolic stable pairs in terms of stable 
objects in the category of pairs (\ref{pair:r}),
we can show the following. 
\begin{lem}\label{lem:aut}
For a parabolic stable pair $(F, s)$, 
let $\Aut(F, s)$ be 
\begin{align*}
\Aut(F, s)\cneq \{ g \in \Aut(F)  :
(g\otimes \oO_H)(s)=s\}. 
\end{align*}
Then we have $\Aut(F, s)=\{ \emph{\id}_{F}\}$.
\end{lem}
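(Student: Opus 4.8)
The statement asserts that a parabolic stable pair $(F,s)$ has no nontrivial automorphisms, so the strategy is to take $g \in \Aut(F,s)$ and show $g = \id_F$. First I would use the fact that $F$ is $\omega$-semistable of a fixed slope $\mu_\omega(F) = \mu$, so that $F$ lies in the abelian category $\Coh_\mu(X)$ of semistable sheaves of slope $\mu$ (together with the zero sheaf), which is of finite length. Inside this category, the endomorphism $g$ decomposes $F$ according to generalized eigenspaces: writing the characteristic polynomial of $g$ as a product $\prod_i (t - \lambda_i)^{m_i}$ over distinct eigenvalues $\lambda_i \in \mathbb{C}^\ast$ (nonzero since $g$ is invertible), we get a canonical direct sum decomposition $F = \bigoplus_i F_{\lambda_i}$ where $F_{\lambda_i} = \ker((g - \lambda_i)^{N})$ for $N \gg 0$, and each $F_{\lambda_i}$ is a subsheaf of $F$ lying in $\Coh_\mu(X)$, hence also semistable of slope $\mu$.

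The key point is then that the quotient maps $\pi_i \colon F \twoheadrightarrow F_{\lambda_i}$ (projection onto one summand, viewed as a quotient via the complementary summands) are surjections with $\mu_\omega(F_{\lambda_i}) = \mu_\omega(F)$, so the third condition of Definition~\ref{defi:para} forces $(\pi_i \otimes \oO_H)(s) \neq 0$ for every $i$ with $F_{\lambda_i} \neq 0$. On the other hand, tensoring the decomposition $F = \bigoplus_i F_{\lambda_i}$ with $\oO_H$ gives $F \otimes \oO_H = \bigoplus_i (F_{\lambda_i} \otimes \oO_H)$, and on this zero-dimensional sheaf $g \otimes \oO_H$ acts with the induced block structure; since $s$ is fixed by $g \otimes \oO_H$, its component $(\pi_i \otimes \oO_H)(s) \in F_{\lambda_i} \otimes \oO_H$ is fixed by the restriction of $g \otimes \oO_H$ to that block, which is a unipotent-times-$\lambda_i$ operator. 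A nonzero vector fixed by $\lambda_i (\id + \text{nilpotent})$ can only exist if $\lambda_i = 1$. Since $(\pi_i \otimes \oO_H)(s) \neq 0$ for each $i$ with $F_{\lambda_i}\neq 0$, we conclude there is only one eigenvalue and it equals $1$, i.e. $g - \id_F$ is nilpotent as an endomorphism of $F$.

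It remains to upgrade "$g - \id_F$ nilpotent" to "$g = \id_F$". Here I would argue that if $g \neq \id_F$ then $h \cneq g - \id_F$ is a nonzero nilpotent endomorphism; let $k \geq 1$ be maximal with $h^k \neq 0$. Then $\im(h^k) \subset F$ is a nonzero subsheaf in $\Coh_\mu(X)$, and $F \twoheadrightarrow F / \ker(h^k) \xrightarrow{\sim} \im(h^k)$ exhibits $\im(h^k)$ as a quotient of $F$ of the same slope, so again by the third condition $(\text{projection} \otimes \oO_H)(s)$ must be nonzero in $\im(h^k) \otimes \oO_H$; but $g \otimes \oO_H$ acts on $F \otimes \oO_H$ as $\id + (h \otimes \oO_H)$ with $h\otimes \oO_H$ nilpotent, and chasing the fixed vector $s$ through powers of $h \otimes \oO_H$ shows $(h^k \otimes \oO_H)(s) = 0$, which combined with purity/transversality (so that $(-)\otimes \oO_H$ is exact enough on the relevant sheaves, per Lemma~\ref{lem:Ext}) contradicts the nonvanishing just obtained. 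I expect the main obstacle to be the bookkeeping around $(-) \otimes \oO_H$: one must be careful that tensoring with $\oO_H$ is compatible with the eigenspace decomposition and with the quotient maps $\pi_i$, which uses the transversality hypothesis on $[F]$ and $H$ (so that $F \otimes \oO_H$ behaves like a "fiber" and direct sum decompositions of $F$ in $\Coh_\mu(X)$ pass to $F \otimes \oO_H$); alternatively, one can avoid this by passing at the outset to the category of pairs \eqref{pair:r} and invoking stability of the corresponding object there, as the paper indicates it will do in Subsection~\ref{subsec:category}, which makes the argument cleaner since stable objects in an abelian category are automatically simple.
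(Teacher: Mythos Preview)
Your argument is correct, and it takes a genuinely different route from the paper. The paper defers the proof to Corollary~\ref{cor:later}: it associates to $(F,s)$ the object $E=(N_{H/X}[-1]\stackrel{s}{\to}F)$ in the category $\aA(\mu,d)$, shows via Proposition~\ref{prop:stab}(iii) that $E$ is $\widehat{\mu}_\alpha$-stable for $\alpha>\mu$, and then invokes the standard fact that stable objects are simple, so $\Hom(E,E)=\mathbb{C}$. An element of $\Aut(F,s)$ is an automorphism of $E$ which is the identity on the $N_{H/X}[-1]$ factor, hence must be the scalar $1$, i.e.\ $g=\id_F$.

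Your approach is instead a direct hands-on argument inside $\Coh(X)$: decompose $F$ into generalized eigenspaces for $g$, use the third condition of Definition~\ref{defi:para} on each projection to force the unique eigenvalue $1$, then kill the nilpotent part the same way. This is entirely self-contained and avoids setting up $\aA(\mu,d)$ and the weak stability $\widehat{\mu}_\alpha$; the price is the bookkeeping with $(-)\otimes\oO_H$, which you correctly handle via transversality and Lemma~\ref{lem:Ext}. The paper's approach, on the other hand, packages this bookkeeping once and for all into the categorical framework that it needs anyway for the Hall-algebra arguments in Section~\ref{sec:wall}, so for the paper's purposes it is the more economical choice.

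One minor remark: your two-step structure (eigenvalues, then nilpotent part) can be collapsed into a single step. If $g\neq\id_F$, set $h=g-\id_F\neq 0$ and consider the surjection $\pi\colon F\twoheadrightarrow\im(h)$; since $\im(h)$ is simultaneously a sub and a quotient of the semistable sheaf $F$, it has slope $\mu$, so the third condition gives $(\pi\otimes\oO_H)(s)\neq 0$. But $\pi$ factors $h$, and $(h\otimes\oO_H)(s)=(g\otimes\oO_H)(s)-s=0$, so exactness of $(-)\otimes\oO_H$ on the short exact sequence $0\to\im(h)\to F\to F/\im(h)\to 0$ forces $(\pi\otimes\oO_H)(s)=0$, a contradiction. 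This bypasses both the eigenspace decomposition and the choice of maximal $k$.
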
 
\begin{proof}
The proof will be given in 
Corollary~\ref{cor:later}. 
\end{proof}

\subsection{Families of parabolic stable pairs}
\label{subsec:Family}
The moduli space of parabolic 
stable pairs is defined to be the scheme 
representing the functor, 
\begin{align}\label{funct}
\mM_n^{\rm{par}}(X, \beta) \colon 
\mathrm{Sch}/\mathbb{C} \to \mathrm{Set}, 
\end{align}
which assigns an $\mathbb{C}$-scheme 
$T$ to the isomorphism classes
of families of parabolic stable 
pairs $(F, s)$ satisfying 
a numerical condition, 
\begin{align}\label{num:cond}
[F]=\beta, \quad \chi(F)=n. 
\end{align}
Here $\beta \in H_2(X, \mathbb{Z})$
and, 
by an abuse of notation, 
 $[F]$ is the homology class of 
the fundamental one cycle (\ref{onecycle}). 

More precisely, the functor (\ref{funct}) is defined 
as follows. 
The set of $T$-valued points of $\mM_n^{\rm{par}}(X, \beta)$
consists of isomorphism classes of pairs, 
\begin{align*}
(\fF, s), 
\end{align*}
satisfying the following conditions. 
\begin{itemize}
\item $\fF \in \Coh(X\times T)$ is a flat family 
of one dimensional coherent sheaves on $X$ satisfying 
(\ref{num:cond}), the first and the second conditions 
in Definition~\ref{defi:para}. 
\item $s$ is a global section, 
\begin{align*}
s \in \Gamma(T, \pi_{T\ast}(\fF \otimes \pi_{X}^{\ast}\oO_{H})),
\end{align*}
such that for each closed point $t\in T$, 
the pair $(\fF_t, s_t)$ is a parabolic 
stable pair on $X$. 
Here $\pi_{X}$ and $\pi_{T}$ are projections
from $X\times T$ onto the corresponding factors. 
\end{itemize}
Here we explain the above second condition. 
Note that the support of 
$\fF \otimes \pi_{X}^{\ast}\oO_{H}$ is 
finite over $T$ by the first condition, 
hence we have 
\begin{align*}
R^{i}\pi_{T\ast}(\fF \otimes \pi_{X}^{\ast}\oO_{H}) =0, 
\end{align*}
for $i>0$. By the base change theorem, 
$\pi_{T\ast}(\fF \otimes \pi_{X}^{\ast}\oO_H)$
is a vector bundle on $T$, 
satisfying 
\begin{align}\label{vect:bund}
\pi_{T\ast}(\fF \otimes \pi_{X}^{\ast}\oO_H) \otimes k(t) 
\cong \fF_{t} \otimes \oO_{H}, 
\end{align}
for any closed point $t\in T$. The above 
second condition is now clear from (\ref{vect:bund}). 

\begin{rmk}
The moduli functor $\mM_n^{\rm{par}}(X, \beta)$
also depends on $\omega$ and $H$. 
We omit these in $\mM_n^{\rm{par}}(X, \beta)$ in order
to simplify the notation. 
\end{rmk}

In general, the
 moduli space of parabolic stable pairs may not be 
compact, since the semistable sheaf $F$ may 
degenerate to a sheaf which lies in $H$. 
In order to avoid such a case, we need to choose 
a suitable divisor 
\begin{align*}
H \in \lvert \oO_X(h) \rvert, \quad h \gg 0, 
\end{align*}
which intersects with one dimensional 
semistable sheaves we are interested in transversally. 
(Recall that $\oO_X(1)$ is an ample line bundle 
on $X$ with $c_1(\oO_X(1))=\omega$.) 
Such a divisor $H$ can be found, once we 
fix a positive integer $d\in \mathbb{Z}_{\ge 1}$
and consider only one dimensional semistable sheaves $F$
satisfying $\omega \cdot [F] \le d$. 
In fact we have the following lemma. 
\begin{lem}\label{lem:trans}
For each $d \in \mathbb{Z}_{>0}$, there is a 
divisor $H \in \lvert \oO_X(h) \rvert$
for $h \gg 0$, (depending on $d$,) 
which intersects with any 
one cycle $C$ on $X$
satisfying $\omega \cdot C \le d$,
transversally. 
\end{lem}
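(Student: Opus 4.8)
The plan is to reduce the statement to a Bertini-type argument applied to a bounded family of curves. First I would observe that the set of one-cycles $C$ on $X$ with $\omega \cdot C \le d$, while infinite, has only finitely many possible homology classes $\beta \in H_2(X,\mathbb{Z})$ with $0 < \beta \cdot \omega \le d$ (plus $\beta = 0$, which is irrelevant here since a zero-cycle trivially avoids any divisor after a generic choice). For each such $\beta$ the effective one-cycles in class $\beta$ are parametrized by a projective Chow variety $\mathrm{Chow}_{\beta}(X)$, which is of finite type over $\mathbb{C}$. Hence it suffices to find, for each fixed $\beta$, an $h_{\beta} \gg 0$ and a divisor $H \in \lvert \oO_X(h_{\beta}) \rvert$ meeting every cycle parametrized by $\mathrm{Chow}_{\beta}(X)$ transversally (in the weak sense of Definition: no irreducible component contained in $H$); then take $h = \max_{\beta} h_{\beta}$ and use that $\oO_X(h')$ is still very ample for $h' \ge h_{\beta}$, so a generic member of $\lvert \oO_X(h) \rvert$ simultaneously works for all $\beta$.

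Next I would set up the universal family. Over $\mathrm{Chow}_{\beta}(X)$ (or better, over a suitable resolution or over the Hilbert scheme dominating it) there is a universal one-cycle whose total space $\mathcal{C} \subset X \times \mathrm{Chow}_{\beta}(X)$ is projective, and whose fibers over closed points are the one-cycles in class $\beta$. The irreducible components of the fibers form a constructible, hence bounded, family of integral curves in $X$ of bounded degree. For a single integral curve $C' \subset X$ of degree $\le d$, the locus of divisors $H \in \lvert \oO_X(h) \rvert$ containing $C'$ is a linear subspace of positive codimension, and for $h \gg 0$ this codimension grows (it is at least $h^0(\oO_X(h)) - h^0(I_{C'}(h))$, and $h^0(\oO_{C'}(h))$ grows linearly in $h$ while $h^0(\oO_X(h))$ grows cubically). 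Running this over the bounded family and using upper-semicontinuity to get a uniform bound, the union of the "bad" loci — divisors containing some component of some cycle — sweeps out a proper closed (indeed, for $h$ large, high-codimension) subset of $\lvert \oO_X(h) \rvert$. A divisor $H$ chosen outside this subset then meets every cycle $C$ with $\omega \cdot C \le d$ transversally.

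The main obstacle I expect is the \emph{uniformity} in the family: one must ensure that a single $h$ works for \emph{all} cycles of degree $\le d$ at once, not merely that for each cycle a generic $H$ works. This is where boundedness of $\mathrm{Chow}_{\beta}(X)$ (finite type, so the family of components of its fibers is bounded) is essential — it lets one invoke a relative Bertini / generic-smoothness argument over the parameter space, or equivalently pass to the incidence variety $\{(H, C) : \text{some component of } C \subset H\}$ and show its projection to $\lvert \oO_X(h) \rvert$ is not dominant for $h \gg 0$ by a dimension count. A clean way to package this is: the incidence variety has dimension at most $\dim \mathrm{Chow}_{\beta}(X) + \dim\{H : H \supset C'\}$ fiberwise, and for $h \gg 0$ the latter fiber dimension drops below $\dim \lvert \oO_X(h) \rvert - \dim\mathrm{Chow}_{\beta}(X)$, forcing non-dominance. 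Everything else — reducing to finitely many $\beta$, using very ampleness of $\oO_X(h)$, taking a maximum over $\beta$ — is routine. I would not carry out the explicit growth-rate estimates in detail; citing boundedness and the standard Bertini argument suffices.
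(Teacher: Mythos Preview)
Your proposal is correct and takes essentially the same approach as the paper: the paper also sets up the incidence variety $Z \subset \Chow_{\le d}(X) \times \lvert \oO_X(h) \rvert$ of pairs $([C], H)$ with some irreducible component of $C$ contained in $H$, and shows by the same dimension count (fiber dimension $\le \dim \lvert \oO_X(h) \rvert - he - D$ for a uniform lower bound $D$ on $\chi(\oO_{C'})$, hence $\dim Z \le \dim \Chow_{\le d}(X) + \dim \lvert \oO_X(h) \rvert - he - D$) that the projection to $\lvert \oO_X(h) \rvert$ is not dominant for $h \gg 0$. The only cosmetic difference is that the paper works directly with $\Chow_{\le d}(X)$ rather than stratifying by $\beta$ first, which avoids the need to take a maximum over $h_\beta$.
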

\begin{proof}
Let $\mathrm{Chow}_{\le d}(X)$ be the Chow variety 
parameterizing one cycles $C$ on $X$ with 
$\omega \cdot C \le d$. Let $\oO_X(1)$ be a 
very ample line bundle on $X$. 
We would like to find $h\gg 0$ and 
an element $H \in \lvert \oO_X(h) \rvert$
which satisfies the desired condition. 
We define the set $Z$ to be the subset, 
\begin{align*}
Z \subset \Chow_{\le d}(X) \times \lvert \oO_X(h) \rvert,
\end{align*}
consisting of $([C], H)$ 
such that there is an irreducible component 
$C' \subset C$ with $C' \subset H$. 
A desired $H$ can be found if 
the projection 
\begin{align}\label{dominant}
Z \to \lvert \oO_X(h) \rvert,
\end{align}
is not a dominant map. 

By the boundedness of the 
Chow variety, we may assume that 
\begin{align}\label{mayassume}
H^1(X, I_{C'}(h)) =
H^1(C', \oO_C'(h))=0, 
\end{align}
for any 
one cycle $[C] \in \Chow_{\le d}(X)$ and 
an irreducible component 
$C' \subset C$. 
Also the Euler characteristic 
$\chi(\oO_{C'})$ for the above $C'$
is bounded below, say $\chi(\oO_{C'}) \ge D$. 
Let $e$ be the smallest number of 
$\deg \oO_{C}(1)$ among curves 
$C$ on $X$, and $Z_{[C]}$
the fiber of 
the projection 
\begin{align*}
Z \to \Chow_{\le d}(X),
\end{align*} at 
a one cycle $[C]$. 
If $C' \subset C$ is an irreducible component, 
we have the exact sequence, 
\begin{align}\label{ex:seq}
0 \to I_{C'} \otimes \oO_X(h) \to 
\oO_X(h) \to \oO_{C'}(h) \to 0. 
\end{align}
Using (\ref{mayassume}),
(\ref{ex:seq}) and the Riemann-Roch theorem,
the dimension of $Z_{[C]}$ is evaluated as 
\begin{align*}
\dim Z_{[C]} &=\dim \lvert \oO_X(h) \rvert 
- \dim H^0(C', \oO_{C'}(h)) \\
&\le \dim \lvert \oO_X(h) \rvert 
-he -D. 
\end{align*}
Therefore
we have 
\begin{align}\label{ineq:chow}
\dim Z \le \dim \Chow_{\le d}(X) + \dim \lvert \oO_X(h) \rvert
-he -D. 
\end{align}
Suppose that the map (\ref{dominant})
is a dominant map. 
Then we have 
$\dim Z \ge \dim \lvert \oO_X(h) \rvert$, hence 
the inequality (\ref{ineq:chow}) implies 
\begin{align}\label{ineq:cont}
he \le \dim \Chow_{\le d}(X) -D. 
\end{align}
The above inequality is not satisfied 
for $h\gg 0$. If $h>0$ 
does not satisfy the above inequality, 
then we can find 
a desired $H \in \lvert \oO_X(h) \rvert$. 
\end{proof}
The next subsections
are devoted to show the following theorem. 
\begin{thm}\label{thm:moduli}
For $d \in \mathbb{Z}_{>0}$, 
choose a divisor $H \in \lvert \oO_X(h) \rvert$ satisfying 
the condition of Lemma~\ref{lem:trans}. Then
for $n\in \mathbb{Z}$ and $\beta \in H_2(X, \mathbb{Z})$
with $\omega \cdot \beta \le d$, 
 the 
functor $\mM_n^{\rm{par}}(X, \beta)$
is represented by a projective $\mathbb{C}$-scheme
$M_n^{\rm{par}}(X, \beta)$.
\end{thm}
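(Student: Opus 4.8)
The plan is to realize $M_n^{\rm{par}}(X,\beta)$ as a GIT quotient of a quasi-projective parameter scheme, following the strategy used for moduli of coherent systems~\cite{LeP} and of Joyce--Song pairs~\cite[Section~13]{JS}. First, since $\omega\cdot\beta\le d$, Lemma~\ref{lem:trans} shows that \emph{every} one dimensional $\omega$-semistable sheaf $F$ with $[F]=\beta$ automatically intersects $H$ transversally, so that condition is vacuous. The family of such $F$ with $\chi(F)=n$ is bounded, so there is $m\gg 0$ with $H^{i}(F(m))=0$ for $i>0$, $F(m)$ globally generated, and $h^{0}(F(m))=P(m)$ for the Hilbert polynomial $P$ determined by $(n,\beta)$ and $\oO_X(1)$. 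Fixing $V=\mathbb{C}^{P(m)}$, each such $F$ is a quotient $q\colon V\otimes\oO_X(-m)\twoheadrightarrow F$, giving a point of $\Quot(V\otimes\oO_X(-m),P)$; let $Q$ be the $\GL(V)$-invariant quasi-projective open subscheme where the quotient sheaf is one dimensional $\omega$-semistable and $V\to H^{0}(F(m))$ is an isomorphism.

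Next I would incorporate the section. On $Q\times X$ let $\fF$ be the universal quotient; as in Subsection~\ref{subsec:Family}, $\fF\otimes\pi_X^{\ast}\oO_H$ is finite over $Q$, so $\wW\cneq\pi_{Q\ast}(\fF\otimes\pi_X^{\ast}\oO_H)$ is a $\GL(V)$-equivariant vector bundle of rank $\beta\cdot H$ on $Q$. Its total space $p\colon\widetilde{Q}\to Q$ parametrizes pairs $(q,s)$ with $s\in F\otimes\oO_H$, with a compatible $\GL(V)$-action. The locus $\widetilde{Q}^{\circ}\subset\widetilde{Q}$ where the third condition of Definition~\ref{defi:para} holds is open: its complement, where some slope-preserving surjection $F\twoheadrightarrow F'$ has $(\pi\otimes\oO_H)(s)=0$, is the image of a proper morphism from a relative Quot scheme over $\widetilde{Q}$, hence closed. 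A point of $\widetilde{Q}^{\circ}$ is precisely a parabolic stable pair equipped with a basis of $H^{0}(F(m))$.

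It then remains to quotient by $\GL(V)$. I would embed $Q$ into a Grassmannian via $V\otimes H^{0}(\oO_X(m'-m))\to H^{0}(F(m'))$ for $m'\gg m$, and $\widetilde{Q}$ into the product of this Grassmannian with a projective bundle recording $s$; for a suitable linearization depending on a small rational parameter (as in the GIT construction of stable pairs) one checks, by a Hilbert--Mumford weight computation, that GIT-stability coincides with the parabolic stability of Definition~\ref{defi:para}. By Lemma~\ref{lem:aut} every parabolic stable pair has trivial automorphism group, so $\GL(V)$ acts on $\widetilde{Q}^{\circ}$ with trivial stabilizers; choosing the parameter generically makes strictly semistable $=$ stable, and the GIT quotient $M_n^{\rm{par}}(X,\beta)\cneq\widetilde{Q}^{\circ}/\!\!/\GL(V)$ is a quasi-projective geometric quotient which, since stabilizers are trivial, represents $\mM_n^{\rm{par}}(X,\beta)$ (the universal pair descends). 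Properness, hence projectivity, I would verify by the valuative criterion over a DVR: extend $F$ by a Langton-type semistable reduction --- the limit is again $\omega$-semistable with $\omega\cdot\beta\le d$, hence transversal to $H$ --- extend $s$ using that $\wW$ is a vector bundle, and perform a further elementary modification in the style of Langton so that the limiting pair again satisfies the third condition; uniqueness of the limit follows from triviality of automorphisms. Alternatively projectivity is automatic from the GIT construction once semistable $=$ stable.

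The technical heart is the comparison in the quotient step: matching ``$(\pi\otimes\oO_H)(s)\ne 0$ for every slope-preserving quotient $F\twoheadrightarrow F'$'' with a numerical Hilbert--Mumford inequality. Because $F$ is only semistable there may be many slope-preserving quotients, so the weight estimates must be handled carefully; this is exactly where I expect to follow~\cite[Section~13]{JS} closely, after reinterpreting a parabolic stable pair as an object $(F,s)$ with $s\colon N_{H/X}[-1]\to F$ via the identification $\Ext^{1}_X(N_{H/X},F)\cong F\otimes\oO_H$ of Lemma~\ref{lem:Ext} and working in the category of pairs $N_{H/X}^{\oplus r}[-1]\to F$.
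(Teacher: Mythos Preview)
Your approach is essentially the same as the paper's: parametrize $\omega$-semistable sheaves by an open subscheme of a Quot scheme, build the vector bundle of sections $s\in F\otimes\oO_H$ over it, and take a GIT quotient after embedding into Grassmannians. The paper follows~\cite[Section~3]{StTh} rather than~\cite[Section~13]{JS}, and carries out the Hilbert--Mumford comparison directly on the Grassmannian embedding without passing through the category of pairs $N_{H/X}^{\oplus r}[-1]\to F$; that categorical reinterpretation is used only later, in Section~\ref{sec:wall}, for the wall-crossing formula, not for the moduli construction itself.

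Two small points of divergence are worth flagging. First, the paper does not argue openness of the parabolic stable locus separately: it deduces openness \emph{from} the GIT comparison, since the GIT (semi)stable locus is automatically open. Second, the paper shows that for its specific polarization $\lL=\oO_{G_1}(1)\boxtimes\oO_{G_2}(1)$ (with $l\gg 0$) one has GIT stable $=$ GIT semistable $=$ parabolic stable (Proposition~\ref{prop:GIT}); projectivity is then immediate from GIT, so no Langton-type valuative criterion is needed. Your alternative route via semistable reduction would also work but is more laborious.
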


\subsection{Construction of the moduli space}\label{subsec:const}
In this subsection, we give a construction of the 
moduli space of parabolic stable pairs. 
As we discussed in Subsection~\ref{subsec:relation}, 
the notion of parabolic stable pairs resemble 
both of parabolic vector bundles on 
curves and PT stable pairs (coherent systems). 
The moduli space of parabolic 
vector bundles is constructed in~\cite{MeSe}, 
and its generalization to parabolic sheaves 
for torsion free sheaves on arbitrary algebraic 
varieties is discussed in~\cite{MY}. 
On the other hand, the moduli space of coherent 
systems is constructed in~\cite{LeP}, and 
its construction is
simplified for PT stable pairs in~\cite{StTh}. 
Our strategy is to imitate
the construction of the moduli space of PT stable pairs 
by applying the arguments in~\cite[Section~3]{StTh}. 

Let us take $d\in \mathbb{Z}$, 
$H \in \lvert \oO_X(h) \rvert$ 
and $n, \beta$ as in the statement of 
Theorem~\ref{thm:moduli}. 
We first note that, by the boundedness of 
semistable sheaves, 
there is $m>0$ 
such that $m'\ge m$ implies
$H^i(X, F(m'))=0$ for $i>0$ 
and $F(m')$ is globally generated
for any $\omega$-semistable one dimensional 
sheaf 
$F$ with
$[F]=\beta$ and $\chi(F)=n$. 
However for later use, we take $m>0$ satisfying the following 
stronger condition.
Note that we have 
\begin{align*}
\mu_{\omega}(F(1))=1+\mu_{\omega}(F).
\end{align*}
Also the set of $\omega$-semistable one dimensional 
sheaves $F'$ satisfying 
\begin{align}\label{F'}
\omega \cdot [F'] \le d,  \quad
\frac{n}{\omega \cdot \beta} \le \mu_{\omega}(F')
<1+\frac{n}{\omega \cdot \beta},
\end{align}
is bounded. Hence 
we can take $m >0$ such that 
\begin{align}\label{vanish:F'}
H^i(X, F'(m))=0, \quad i>0,
\end{align}
and $F'(m)$ is globally generated for any
$\omega$-semistable sheaves $F'$ satisfying 
$\omega \cdot [F'] \le d$ and $\mu_{\omega}(F') \ge n/\omega \cdot \beta$.  
Below we fix such $m>0$. 
 
Let $F$ be an $\omega$-semistable 
one dimensional sheaf with $[F]=\beta$
and $\chi(F)=n$. 
By the above choice of $m$, we have 
\begin{align}\label{Hilb:poly}
\chi_{F}(m) &\cneq \dim H^0(X, F(m)) \\
\notag
&= m(\omega \cdot \beta) +n, 
\end{align}
by the Riemann-Roch theorem. 
Below we write 
\begin{align*}
\chi_{n, \beta}(m) \cneq m(\omega \cdot \beta)+n,
\end{align*}
and set $V$ to be the $\mathbb{C}$-vector 
space of dimension
$\chi_{n, \beta}(m)$. Then 
by the vanishing (\ref{vanish:F'}), 
for such an $\omega$-semistable sheaf $F$, 
we have a surjection, 
\begin{align}\label{surjection}
V \otimes \oO_X(-m) \twoheadrightarrow F, 
\end{align}
such that the induced morphism 
\begin{align}\label{induced}
V \to H^0(X, F(m))
\end{align}
 is an isomorphism. 

We consider the Quot-scheme, 
\begin{align}\label{Quot}
Q \cneq \mathrm{Quot}(V\otimes \oO_X(-m), n, \beta), 
\end{align}
which parameterizes quotients 
$V\otimes \oO_X(-m) \twoheadrightarrow F$
with $F$ one dimensional sheaf 
satisfying $[F]=\beta$ and $\chi(F)=n$. 
The scheme $Q$ contains the open subset, 
\begin{align}\label{UQ}
U \subset Q, 
\end{align}
which corresponds to quotients (\ref{surjection})
such that $F$ is $\omega$-semistable and the 
induced morphism (\ref{induced}) is an isomorphism. 
Note that outside $U$
the 
sheaf $F$ may no longer be semistable nor a 
pure sheaf. 

Let 
\begin{align*}
\left( V\otimes \oO_X(-m) \right) 
\boxtimes \oO_{Q} \twoheadrightarrow \mathbb{F},
\end{align*}
be the universal quotient on $X\times Q$ and 
we denote by $\mathbb{F}_{U}$ the restriction of 
$\mathbb{F}$ to $X\times U$. 
By the arguments in the previous subsection
and Lemma~\ref{lem:trans}, 
we have the vector bundle $R_U$ on $U$, 
\begin{align}\label{RU}
R_U \cneq \pi_{U\ast}(\mathbb{F}_{U} \otimes \oO_{H\times U})
\to U. 
\end{align} 
Note that pairs $(F, s)$ satisfying the 
first and the second conditions in Definition~\ref{defi:para}
and $[F]=\beta$, $\chi(F)=n$
bijectively correspond to closed points in $R_U$
up to the action of $\mathrm{GL}(V)$ on $R_U$.
Let 
\begin{align*}
R_{U}^{\rm{s}} \subset R_{U},
\end{align*}
be the subset corresponding to parabolic 
stable pairs. 
Suppose for instance that $R_U^{\rm{s}}$ is an open subset of 
$R_U$. Then 
the resulting moduli space can be
constructed to be the quotient space,
\begin{align}\label{quot:space}
R_U^{\rm{s}}/\mathrm{GL}(V) =
R_U^{'\rm{s}}/\mathrm{SL}(V), 
\end{align}
where $R_U^{'\rm{s}}$ is the quotient space of 
$R_U^{\rm{s}}$ by the diagonal subgroup 
$\mathbb{C}^{\ast} \subset \mathrm{GL}(V)$. 
Note that we have 
\begin{align}\label{PRU}
R_U^{'\rm{s}} \subset \mathbb{P}(R_U), 
\end{align} 
where $\mathbb{P}(R_U)$ is the projectivization of 
$R_U$. 

By Lemma~\ref{lem:aut}, the action of 
$\mathrm{SL}(V)$ on $R_U^{'\rm{s}}$ is free, 
hence the quotient space (\ref{quot:space})
is at least an algebraic space 
of finite type,
 once we prove the openness of 
parabolic stable pair locus. 
In fact we show that $R_U^{'\rm{s}}$
coincides with a GIT stable locus 
in a certain projective compactification of 
$\mathbb{P}(R_U)$, 
hence in particular $R_U^{'\rm{s}}$
is an open subset of $\mathbb{P}(R_U)$. 

\subsection{Compactification of $\mathbb{P}(R_U)$}
In order to interpret $R_U^{'\rm{s}}$
as a GIT stable locus, we embedded $\mathbb{P}(R_U)$
into a product of a Grassmannian and the Quot scheme $Q$.  
Let $V\otimes \oO_X(-m) \twoheadrightarrow F$ be a quotient 
corresponding to a point in $Q$, 
and $V' \subset V$ a linear subspace. 
Let $F' \subset F$ be the subsheaf generated by $V'$. 
We take an exact sequence, 
\begin{align*}
0 \to G' \to V'\otimes \oO_X(-m) \to F' \to 0. 
\end{align*}
By the boundedness of Quot scheme and the subspaces 
$V' \subset V$, we can take $m'> 2m$ satisfying 
\begin{align}\label{vanish}
H^1(H, G'\otimes \oO_{H}(m'))=0,
\end{align}
and $\oO_X(m')$ is globally generated. 
Below such $m'>2m$ is also fixed. 

Suppose that $V\otimes \oO_X(-m) \twoheadrightarrow F$
corresponds to a point in $U$. 
By the vanishing (\ref{vanish}) for $V'=V$, we have the surjection, 
\begin{align}\label{VK}
V \otimes K \twoheadrightarrow F\otimes \oO_{H}(m'), 
\end{align}
where $K=H^0(H, \oO_H(m'-m))$. 
Also since $\oO_X(m')$ is globally generated, 
the natural morphism, 
\begin{align}\label{nat:mor}
F \to F(m') \otimes H^0(X, \oO_X(m'))^{\vee},
\end{align}
is injective whose cokernel is a pure one dimensional sheaf. 
By setting $M=H^0(X, \oO_X(m'))^{\vee}$, 
tensoring $M, \oO_H$ with (\ref{VK}), (\ref{nat:mor})
respectively, 
and composing them, 
we obtain the surjections, 
\begin{align*}
V \otimes K \otimes M &\twoheadrightarrow F \otimes \oO_H(m')
 \otimes M \\
&\twoheadrightarrow 
\left(F \otimes \oO_H(m') \otimes M \right)/
\left(F \otimes \oO_H \right).
\end{align*}
Hence a one dimensional subspace in $F \otimes \oO_H$
corresponds to a $d_M(\beta \cdot H)-1$-dimensional 
quotient of $V \otimes K \otimes M$, 
where $d_M=\dim M$.

The above argument yields the embedding, 
\begin{align*}
\mathbb{P}(R_U) \hookrightarrow G(V\otimes K \otimes M, d_M(\beta \cdot H) -1) 
\times Q.
\end{align*}
Here for a vector space $V$, we denote by 
$G(V, m)$ the Grassmannian parameterizing quotients 
$V\twoheadrightarrow V'$ with $\dim V'=m$. 
We define $R$ to be the Zariski closure, 
\begin{align*}
R \cneq \overline{\mathbb{P}(R_U)} \subset
G(V\otimes K \otimes M, d_M(\beta \cdot H)-1) \times Q.  
\end{align*}
The next purpose is to give a polarization of $R$. 
Let $V\otimes \oO_X(-m) \twoheadrightarrow F$ be a
quotient giving a closed point in $Q$. 
By taking $l\gg 0$, we have the surjection, 
\begin{align*}
V \otimes W \twoheadrightarrow H^0(X, F(l)), 
\end{align*}
where $W=H^0(X, \oO_X(l-m))$. 
The above surjections induce the embedding, 
\begin{align*}
Q \hookrightarrow G(V\otimes W, \chi_{n, \beta}(l)).
\end{align*}
Recall that any Grassmannian 
$G(V, m)$ is embedded into
$\mathbb{P}(\wedge^m V)$
 via Pl$\ddot{\rm{u}}$cker embedding. 
Pulling back $\oO(1)$ on $\mathbb{P}(\wedge^m V)$, 
we have a $\mathrm{GL}(V)$-equivariant 
polarization $\oO_{G(V, m)}(1)$ on $G(V, m)$. 

For simplicity we write
\begin{align*}
G_1=G(V\otimes K\otimes M, d_M(\beta \cdot H) -1), \
G_2=G(V\otimes W, \chi_{n, \beta}(l)).
\end{align*}
By the above argument, we have 
$R \subset G_1 \times G_2$, hence 
there is a following $\SL(V)$-equivariant ample line bundle
$\lL$ on $R$, 
\begin{align*}
\lL \cneq \oO_{G_1}(1) \boxtimes \oO_{G_2}(1)|_{R}. 
\end{align*}
In the next subsection, 
for $l\gg 0$, 
we see that the
GIT stable locus in $R$ w.r.t. $\lL$
 coincides with 
$R^{'\rm{s}}_{U}$. 

\subsection{Hilbert-Mumford criterion}
We apply the Hilbert-Mumford criterion~\cite{Mum} 
to investigate the 
GIT stable locus with respect to $(R, \lL)$. 
Let $\lambda$ be a one parameter subgroup, 
\begin{align*}
\lambda \colon \mathbb{C}^{\ast} \to \mathrm{SL}(V). 
\end{align*}
Then $\lambda$ corresponds to the grading of $V$, 
\begin{align*}
V =\bigoplus_{k\in \mathbb{Z}}V_k,
\end{align*}
where $V_k$ is the space of weight $k$. 
Let us take a closed point in $R$, 
\begin{align*}
(V\otimes K \otimes M \stackrel{\phi}{\twoheadrightarrow} A, 
V\otimes \oO_X(-m) \stackrel{\rho}\twoheadrightarrow F) 
\in R \subset G_1 \times Q. 
\end{align*}
For simplicity, we write the above point by 
$(\phi, \rho)$. 
We set $V_{\le k} \cneq \oplus_{j\le k}V_j$ and
$A_{\le k}$, $F_{\le k}$ to be
\begin{align*}
A_{\le k} &\cneq \phi(V_{\le k} \otimes K\otimes M), \\
F_{\le k} &\cneq \rho(V_{\le k} \otimes \oO_X(-m)).
\end{align*}
Let $A_{k} \cneq A_{\le k}/A_{\le k-1}$
and $F_{k} \cneq F_{\le k}/F_{\le k-1}$. 
By the construction, we have the surjections, 
\begin{align*}
\phi_k &\colon V_k \otimes K \otimes M \twoheadrightarrow A_k, \\
\rho_k & \colon V_k \otimes \oO_X(-m) \twoheadrightarrow F_k. 
\end{align*}
The following proposition follows from an
argument of the application of 
Hilbert-Mumford criterion to the moduli of sheaves. 
For instance, see~\cite[Lemma~4.4.3, Lemma~4.4.4]{Hu}, \cite[Lemma~3.12]{StTh}.
\begin{prop}
In the above situation, we have 
\begin{align*}
\lim_{t\to 0}\lambda(t)
\cdot (\phi, \rho)=
\left(\oplus_{k}\phi_k, \oplus_{k} \rho_k  \right). 
\end{align*}
The Hilbert-Mumford weight $\mu^{\lL}((\phi, \rho), \lambda)$
is given by 
\begin{align*}
\frac{1}{\dim V}\sum_{k}\left\{
 \dim V \left(\chi_{F\le k}(l)+\dim A_{\le k} \right)
-\dim V_{\le k}\left(\chi_{F}(l)+\dim A \right)\right\}. 
\end{align*}
Here $\chi_{F}(l)$ is the Hilbert polynomial (\ref{Hilb:poly}). 
\end{prop}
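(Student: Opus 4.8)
The plan is to reduce the computation of the Hilbert–Mumford weight to the standard GIT analysis for moduli of sheaves, as carried out in \cite[Lemma~4.4.3, Lemma~4.4.4]{Hu} and \cite[Lemma~3.12]{StTh}, adapted to the product $G_1 \times Q$. First I would treat the two factors separately. For the Quot-scheme factor $Q \subset G_2 = G(V\otimes W, \chi_{n,\beta}(l))$, the one-parameter subgroup $\lambda$ acts through the grading $V = \bigoplus_k V_k$, and the associated graded quotient of $V\otimes \oO_X(-m)\twoheadrightarrow F$ is exactly $\bigoplus_k \rho_k$, with each $F_k = F_{\le k}/F_{\le k-1}$. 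Taking $l \gg 0$ so that $H^0(X, F_{\le k}(l))$ computes $\chi_{F_{\le k}}(l)$ for all the finitely many sheaves occurring in the (bounded) family, the limit point in $G_2$ is the flag-type quotient determined by the filtration $\{H^0(X, F_{\le k}(l))\}_k$, and the Mumford weight of the $G_2$-factor is the standard expression $\frac{1}{\dim V}\sum_k\{\dim V \cdot \chi_{F_{\le k}}(l) - \dim V_{\le k}\cdot \chi_F(l)\}$; this is verbatim the sheaf-theoretic computation in \cite{Hu}, \cite{StTh}.

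Next I would carry out the analogous analysis for the Grassmannian factor $G_1 = G(V\otimes K\otimes M, d_M(\beta\cdot H)-1)$. Here $\lambda$ acts on $V\otimes K\otimes M$ via the grading on $V$ (with $K, M$ fixed), and the limit of the quotient $V\otimes K\otimes M \stackrel{\phi}{\twoheadrightarrow} A$ is $\bigoplus_k \phi_k$ with $A_k = A_{\le k}/A_{\le k-1}$, by the same elementary fact about limits of quotients under a $\mathbb{C}^\ast$-grading. The Mumford weight for a single Grassmannian $G(U, r)$ at a quotient $U\twoheadrightarrow A$ with respect to the Plücker polarization $\oO_{G_1}(1)$ is $\sum_k \{\text{(weight contribution)}\}$, which after the substitution $U = V\otimes K\otimes M$ and using that $\dim(V_{\le k}\otimes K\otimes M) = \dim V_{\le k}\cdot \dim(K\otimes M)$ reduces, up to the overall normalization by $\dim V$ forced by working in $\SL(V)$ rather than $\GL(V)$, to $\frac{1}{\dim V}\sum_k\{\dim V\cdot \dim A_{\le k} - \dim V_{\le k}\cdot \dim A\}$. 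Since $\lL = \oO_{G_1}(1)\boxtimes\oO_{G_2}(1)|_R$, the total weight is the sum of the two contributions, which gives precisely the stated formula.

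The limit statement $\lim_{t\to 0}\lambda(t)\cdot(\phi,\rho) = (\oplus_k\phi_k, \oplus_k\rho_k)$ I would prove by the usual argument: over $\mathbb{C}^\ast$ the family $\lambda(t)\cdot(\phi,\rho)$ extends to $t=0$ inside the (proper over $\Spec\mathbb{C}$) product $G_1\times Q$, and one identifies the flat limit of the quotient sheaf with the associated graded by filtering by the weight spaces and passing to the Rees construction; this is standard and reduces to \cite[Section~4.4]{Hu} for the $Q$-factor, the $G_1$-factor being the purely linear-algebra analogue. The main obstacle, and the only point requiring genuine care, is the passage between the "true" Hilbert polynomial $\chi_F(l)$ and the dimension $\dim H^0(X, F(l))$ — that is, ensuring $l$ has been chosen large enough (depending only on the bounded families of sheaves $F$ and their graded pieces $F_{\le k}$ that can arise) that all higher cohomology vanishes and $H^0(X, -(l))$ is exact on the relevant short exact sequences $0\to F_{\le k-1}\to F_{\le k}\to F_k\to 0$, so that $\chi_{F_{\le k}}(l) = \dim H^0(X, F_{\le k}(l))$ uniformly. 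Granting this boundedness-and-vanishing input (which is exactly the role of the earlier choices of $m$, $m'$, $l\gg 0$), the proposition follows by combining the two Mumford-weight computations, and I would write it up by citing \cite{Hu}, \cite{StTh} for the sheaf part and spelling out only the straightforward Grassmannian bookkeeping.
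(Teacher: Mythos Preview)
Your proposal is correct and follows exactly the approach the paper indicates: the paper does not give a self-contained proof but simply refers to \cite[Lemma~4.4.3, Lemma~4.4.4]{Hu} and \cite[Lemma~3.12]{StTh}, and your write-up is a faithful unpacking of precisely those references, splitting the computation over the two factors $G_1$ and $G_2$ and summing the weights because $\lL = \oO_{G_1}(1)\boxtimes\oO_{G_2}(1)|_R$. If anything, you have supplied more detail than the paper itself.
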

By the Hilbert-Mumford criterion, a point 
$(\phi, \rho) \in R$ is GIT (semi)stable if 
for any non-trivial one parameter subgroup $\lambda$
as above, we have $\mu^{\lL}((\phi, \rho), \lambda) >(\ge) 0$. 
This condition is equivalent to that, 
for any proper subspace $V' \subset V$, we have 
\begin{align}\label{V'V}
\dim V \left(\chi_{F'}(l)+\dim A' \right)
-\dim V'\left(\chi_{F}(l)+\dim A \right) >(\ge) 0. 
\end{align}
Here $A' \subset A$ and $F' \subset F$ are 
subspace and the subsheaf generated by 
$V'\otimes K \otimes M$ and $V'\otimes \oO_X(-m)$ respectively. 
In fact if we have such a subspace $V' \subset V$, then 
there is a one parameter subgroup $\lambda$ 
whose induced grading on $V$ is
$V_{\le -1}=0$, $V_{\le 0}=V'$
and $V_{\le 1}=V$.

Let $R^{\lL\mbox{-}\rm{s}}$ and $R^{\lL\mbox{-}\rm{ss}}$ be
the GIT stable and semistable locus in $R$, 
and $R_{U}^{'\rm{s}}$ the subspace of $\mathbb{P}(R_U)$
given in (\ref{PRU}). 
We prove the following proposition. 
\begin{prop}\label{prop:GIT}
For $(\phi, \rho) \in R$, 
the following three conditions are equivalent. 

(i) We have $(\phi, \rho) \in R_{U}^{'\rm{s}}$. 

(ii) We have $(\phi, \rho) \in R^{\lL\mbox{-}\rm{s}}$. 

(iii) We have $(\phi, \rho) \in R^{\lL\mbox{-}\rm{ss}}$. 
\end{prop}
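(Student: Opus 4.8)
The plan is to prove the cycle of implications (i)$\Rightarrow$(ii)$\Rightarrow$(iii)$\Rightarrow$(i), using the Hilbert--Mumford weight formula from the preceding proposition together with the numerical stability inequality (\ref{V'V}). The crucial translation device is the explicit comparison, for $l\gg 0$, between the stability inequality for parabolic stable pairs in Definition~\ref{defi:para} and the GIT inequality (\ref{V'V}). To set this up, I would first recall that a point $(\phi,\rho)\in R$ lying in $\mathbb{P}(R_U)$ corresponds to a pair $(F,s)$ with $F$ an $\omega$-semistable one dimensional sheaf, $V\xrightarrow{\sim}H^0(X,F(m))$, and $s\in F\otimes\oO_H$ (equivalently, the one dimensional subspace of $F\otimes\oO_H$ spanned by $s$, which in the Grassmannian picture is the quotient $A$ of $V\otimes K\otimes M$). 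For a subspace $V'\subset V$ generating $F'\subset F$ and $A'\subset A$, one has $\chi_{F'}(l)=l(\omega\cdot[F'])+\chi(F')$ by Riemann--Roch once $l$ is large enough that $F'(l)$ has no higher cohomology, so the leading term in $l$ of the left side of (\ref{V'V}) is $l\big(\dim V\cdot(\omega\cdot[F'])-\dim V'\cdot(\omega\cdot\beta)\big)$.

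For (i)$\Rightarrow$(ii): given $(\phi,\rho)\in R_U^{'\mathrm{s}}$, so $(F,s)$ is a parabolic stable pair, I would show (\ref{V'V}) holds strictly for every proper $V'\subset V$. The analysis splits according to whether $\mu_\omega(F')<\mu_\omega(F)$ or $\mu_\omega(F')=\mu_\omega(F)$. In the first (strict) case the $l$-leading coefficient $\dim V\cdot(\omega\cdot[F'])-\dim V'\cdot(\omega\cdot\beta)$ is already positive after using $\dim V=\chi_{n,\beta}(m)$ and the standard estimate $\dim V'\le h^0(F'(m))=\chi_{F'}(m)$ (valid by the global generation and vanishing built into the choice of $m$, cf.~(\ref{vanish:F'})), so (\ref{V'V}) holds for $l\gg 0$ regardless of the $\dim A'$ term. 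In the borderline case $\mu_\omega(F')=\mu_\omega(F)$, the $l$-leading coefficient vanishes and one is reduced to the constant-in-$l$ term; here the third condition in Definition~\ref{defi:para} enters decisively. Since $F\twoheadrightarrow F/F'$ is a slope-preserving surjection, parabolic stability forces the image of $s$ in $(F/F')\otimes\oO_H$ to be nonzero, which means the quotient $A$ is \emph{not} generated by $V'\otimes K\otimes M$ alone, i.e. $A'\subsetneq A$ strictly, so $\dim A-\dim A'\ge 1$; combined with $\dim V'\le\chi_{F'}(m)$ and $\mu_\omega(F')=\mu_\omega(F)$ this yields the strict inequality. The implication (ii)$\Rightarrow$(iii) is trivial since the stable locus is contained in the semistable locus.

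For (iii)$\Rightarrow$(i): suppose $(\phi,\rho)\in R^{\lL\text{-ss}}$. I would first argue that $(\phi,\rho)$ must lie in $\mathbb{P}(R_U)$, not merely in its closure $R$. This is the place where one needs to rule out the boundary: if $F$ fails to be pure, $\omega$-semistable, or the map $V\to H^0(X,F(m))$ fails to be an isomorphism, one produces a destabilizing subspace $V'$ contradicting (\ref{V'V}) for $l\gg 0$ --- this is the standard GIT argument for moduli of sheaves (the references \cite[Lemma~4.4.3, Lemma~4.4.4]{Hu}, \cite[Lemma~3.12]{StTh} cover exactly this), applied to the $G_2$-factor and noting that the $G_1$-contribution $\dim A'\le\dim A$ only helps. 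Once $(F,s)$ is a genuine pair satisfying the first two conditions of Definition~\ref{defi:para}, semistability of the GIT point forces, via the borderline case of (\ref{V'V}), that for every slope-preserving surjection $F\twoheadrightarrow F''$ the induced $A''\subsetneq A$, hence $s$ maps to something nonzero in $F''\otimes\oO_H$ --- which is precisely the third condition, so $(F,s)$ is a parabolic stable pair and $(\phi,\rho)\in R_U^{'\mathrm{s}}$. Finally, the chain (i)$\Rightarrow$(ii)$\Rightarrow$(iii)$\Rightarrow$(i) shows all three loci coincide, and in particular that semistability and stability agree on $R$.

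The main obstacle I anticipate is the borderline case $\mu_\omega(F')=\mu_\omega(F)$ in both directions: one must carefully track the constant term of the Hilbert--Mumford weight as a polynomial in $l$ and correctly identify the contribution of the Grassmannian factor $G_1$ (the $\dim A'$ versus $\dim A$ comparison) with the parabolic data $s$. Getting the bookkeeping right --- in particular verifying that $A'=A$ holds if and only if $(\pi\otimes\oO_H)(s)=0$ for the corresponding quotient $\pi\colon F\twoheadrightarrow F/F'$, using the surjections (\ref{VK}) and (\ref{nat:mor}) that define the embedding into $G_1$ --- is the technical heart of the proof. A secondary subtlety is ensuring $m'$ and $l$ are chosen uniformly (independently of the particular $V'\subset V$ and the particular semistable $F$) so that all the cohomology-vanishing and Riemann--Roch identities hold simultaneously; this is handled by the boundedness statements already invoked in Subsection~\ref{subsec:const}.
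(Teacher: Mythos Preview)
Your overall strategy---the cycle (i)$\Rightarrow$(ii)$\Rightarrow$(iii)$\Rightarrow$(i) via the inequality (\ref{V'V}), with the case $\mu_\omega(F')=\mu_\omega(F)$ as the crux---is exactly the paper's. But your treatment of that borderline case is wrong in a way that breaks both nontrivial implications.

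In (i)$\Rightarrow$(ii) you conclude from parabolic stability that ``$A'\subsetneq A$ strictly, so $\dim A-\dim A'\ge 1$'' and say this yields (\ref{V'V}). This is the wrong numerical takeaway. After replacing $V'$ by $H^0(X,F'(m))$ (which forces $r'<r$), one always has $\dim A'\le d_M h r'<d_M h r-1=\dim A$ regardless of $s$, so ``$A'\subsetneq A$'' uses nothing. What parabolic stability actually gives is that $s\notin F'\otimes\oO_H$, hence $\Ker(\psi')=\Ker(\psi)\cap\big(F'\otimes\oO_H(m')\otimes M\big)=0$, hence $\dim A'=d_M h r'$ \emph{exactly}. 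With $\dim V'/\dim V=r'/r$ this yields
\[
\dim V\cdot\dim A'-\dim V'\cdot\dim A=\dim V\Big(d_M h r'-\tfrac{r'}{r}(d_M h r-1)\Big)=\dim V\cdot\tfrac{r'}{r}=\dim V'>0.
\]
If instead $s\in F'\otimes\oO_H$, so $\dim A'=d_M h r'-1$, the same computation gives $\dim V'-\dim V<0$ and (\ref{V'V}) \emph{fails}. So the dichotomy that matters is $\dim A'=d_M h r'$ versus $d_M h r'-1$, not $A'=A$ versus $A'\subsetneq A$. Your stated equivalence at the end, ``$A'=A$ holds if and only if $(\pi\otimes\oO_H)(s)=0$'', is simply false.

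The same confusion appears in your (iii)$\Rightarrow$(i): you speak of ``the induced $A''\subsetneq A$'' for a quotient $F''$, but there is no $A''$ attached to $F''$; the inequality (\ref{V'V}) is tested with $V'=H^0(X,F'(m))$ for $F'=\Ker(F\to F'')$. The correct contrapositive is: if $(\pi\otimes\oO_H)(s)=0$ then $s\in F'\otimes\oO_H$, so $\dim A'=d_M h r'-1$, and the computation above makes the left side of (\ref{V'V}) strictly negative, contradicting GIT semistability.
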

\begin{proof}
(i) $\Rightarrow$ (ii) : 
Suppose that $(\phi, \rho) \in R_{U}^{'\rm{s}}$, i.e. 
it determines a parabolic stable pair. 
We show that the LHS of (\ref{V'V}) is positive for 
any proper subspace $V' \subset V$. 
For simplicity, we write 
$\chi_F(l)=rl+n$ and $\chi_{F'}(l)=r'l+n'$. 
Note that $r=\beta \cdot \omega$. 
Since we are taking $l\gg 0$, the assertion holds if 
either
\begin{align}\label{either1}
r' \dim V  > r \dim V', 
\end{align}
or $r' \dim V =r \dim V'$ and 
\begin{align}\label{either2}
\dim V(n'+\dim A')> \dim
 V'(n+\dim A), 
\end{align}
holds. 
Also since $V'$ is a subspace of 
$H^0(X, F'(m))$, we may assume that 
$V' =H^0(X, F'(m))$. 
Then we have 
\begin{align*}
\frac{r \dim V'}{r'\dim V}
= \frac{m+\mu_{\omega}(F')}{m+\mu_{\omega}(F)} \le 1,
\end{align*}
by the $\omega$-semistability of $F$. 
Therefore (\ref{either1}) does not hold only 
if $\mu_{\omega}(F')=\mu_{\omega}(F)$
and $r \dim V'=r \dim V$. 

Suppose that (\ref{either1}) does not hold. 
Then by the above argument, we have 
$n'\dim V=n \dim V'$, hence it is enough to show that
\begin{align}\label{VA}
\dim V \cdot \dim A'>\dim V' \cdot \dim A. 
\end{align}
Since we have the surjection 
$V'\otimes K \otimes M\twoheadrightarrow F'\otimes \oO_{H}(m') \otimes M$
by the vanishing (\ref{vanish}), we have 
\begin{align*}
A'=\Imm \left(F'\otimes \oO_{H}(m') \otimes M
 \to F \otimes \oO_{H}(m') \otimes M \to A \right). 
\end{align*}
There is a commutative diagram, 
\begin{align}\label{COM}
\xymatrix{
0 \ar[r] & F' \otimes \oO_H \ar[r]\ar[d] & F\otimes \oO_H \ar[d] \\
0 \ar[r] & F'\otimes \oO_{H}(m') \otimes M \ar[r] \ar[d]^{\psi'} & 
F\otimes \oO_{H}(m') \otimes M \ar[d]^{\psi} \\
0 \ar[r] & A' \ar[r] & A.
}
\end{align}
By the assumption, 
$\Ker(\psi)$ is one dimensional, contained in $F\otimes \oO_H$, 
and
any non-zero element 
$s \in \Ker(\psi)$ gives a parabolic 
stable pair $(F, s)$. Since 
$\Ker(\psi') \subset \Ker(\psi)=\mathbb{C} \cdot s$, 
and the top square of (\ref{COM}) is Cartesian, 
the stability condition in Definition~\ref{defi:para}
implies that $\Ker(\psi')=0$. 
Hence $\dim A'=d_M h r'$, and together with 
$r\dim V'=r'\dim V$, $\dim A=d_M hr-1$, we obtain the 
inequality (\ref{VA}). 

(ii) $\Rightarrow$ (iii) : Obvious. 

(iii) $\Rightarrow$ (i) :
Suppose that $(\phi, \rho)$ is a GIT semistable 
point. First we show that 
$(\phi, \rho) \in \mathbb{P}(R_U) \subset R$. 
Since $\mathbb{P}(R_U)$ is projective over $U$, 
it is enough to show that $\rho \colon V\otimes \oO_X(-m) 
\to F$ is a point in $U$, i.e. $F$ is $\omega$-semistable 
and the map 
\begin{align}\label{map:V}
V \to H^0(X, F(m)),
\end{align}
 is an isomorphism. 
By the GIT stability, 
 for any proper subspace $V' \subset V$, 
the LHS of (\ref{V'V}) is positive for $l\gg 0$. 
By looking at the leading coefficients, we have 
\begin{align}\label{lead}
r' \dim V \ge r \dim V'. 
\end{align}
In order to show the map (\ref{map:V}) is an isomorphism, 
it is enough to show the injectivity of (\ref{map:V}). 
If (\ref{map:V}) is not injective, 
then there is a proper subspace $V' \subset V$ which 
generates a zero sheaf in $F$, 
which contradicts to (\ref{lead}). Therefore 
the map (\ref{map:V}) is an isomorphism. 

Next we show that $F$ is $\omega$-semistable. 
Let us take an exact sequence in $\Coh_{\le 1}(X)$, 
\begin{align}\label{F'F}
0 \to F' \to F \stackrel{\pi}{\to} F'' \to 0,
\end{align}
and suppose that $\mu_{\omega}(F')>\mu_{\omega}(F)$. 
Let $V'$ be the $\mathbb{C}$-vector space
$H^0(X, F'(m))$, which 
is considered to be a subspace of $V$
via the isomorphism (\ref{map:V}). 
Then by a choice of $m$ in Subsection~\ref{subsec:const}, 
$V'\otimes \oO_X(-m)$ generates $F'$. 
Applying (\ref{lead}),
we obtain $\mu_{\omega}(F) \ge \mu_{\omega}(F')$, 
a contradiction. Hence $F$ is $\omega$-semistable. 

Finally we show that $(\phi, \rho)$ determines a 
parabolic stable pair. Since $(\phi, \rho) \in \mathbb{P}(R_U)$, 
the surjection $\phi \colon V\otimes K \otimes M \twoheadrightarrow A$ factors
through $F\otimes \oO_{H}(m) \otimes M$, 
\begin{align*}
V\otimes K \otimes M \to F\otimes \oO_{H}(m') \otimes M
 \stackrel{\psi}{\to} A, 
\end{align*}
such that $\Ker(\psi)$ is one dimensional
and contained in $F\otimes \oO_H$. 
We would like to show that, for any non-zero element 
$s\in \Ker(\psi)$, the pair $(F, s)$ is a parabolic 
stable pair. 
Suppose by contradiction 
that there is an exact sequence of the form (\ref{F'F})
with $\mu_{\omega}(F')=\mu_{\omega}(F)=\mu_{\omega}(F'')$
and $(\pi \otimes \oO_H)(s) =0$. 
Setting $V'=H^0(X, F'(m))$, our choices of $m$ and $m'$
yield the commutative diagram, 
\begin{align*}
\xymatrix{
V'\otimes K \otimes M \ar[d] \ar[r] & F'\otimes \oO_H(m') \otimes M
 \ar[r]^{\qquad \quad \psi'} \ar[d]
& A' \ar[d] \\
V\otimes K \otimes M\ar[r] & F\otimes \oO_H(m') \otimes M \ar[r]^{\qquad \quad 
\psi}
& A.
}
\end{align*}
Here all the vertical arrows are injections and horizontal arrows are 
surjections. 
By the assumption $(\pi \otimes \oO_H)(s)=0$, 
we have $\Ker(\psi') =\Ker(\psi) =\mathbb{C} \cdot s$, 
hence $\dim A'=d_M hr'-1$. On the other hand, 
$(\rho, \phi)$ is 
GIT semistable by the assumption, hence the LHS of (\ref{V'V})
 should be non-negative. 
Also 
since 
$\mu_{\omega}(F)=\mu_{\omega}(F')$, 
we have 
\begin{align*}
\frac{r\dim V'}{r'\dim V}=\frac{m+\mu_{\omega}(F')}{m+\mu_{\omega}(F)}=1.
\end{align*}
Hence we have 
\begin{align}\label{eq1}
\frac{\dim V'}{\dim V}=\frac{r'}{r}=\frac{n'}{n}. 
\end{align}
Therefore we have 
\begin{align}\label{eq2}
\frac{\dim V' \cdot \dim A}{\dim V \cdot \dim A'}
=\frac{r'}{r} \cdot \frac{d_M hr-1}{d_M hr'-1}>1.
\end{align}
The equalities (\ref{eq1}), (\ref{eq2}) imply that 
the LHS of (\ref{V'V}) is negative, a contradiction. 
\end{proof}

\textit{Proof of Theorem~\ref{thm:moduli}:}
\begin{proof}
By Proposition~\ref{prop:GIT}, if we take 
$l\gg 0$, then we have 
\begin{align}\label{space}
R_{U}^{'\rm{s}}/\SL(V) =R^{\lL\mbox{-}\rm{s}}/\SL(V)
=R^{\lL\mbox{-}\rm{ss}}/\hspace{-.3em}/ \SL(V). 
\end{align}
Then by a general theory of GIT quotient~\cite{Mum}, 
the space (\ref{space}) 
is a projective scheme.
 By Lemma~\ref{lem:aut}, 
the automorphism group of any parabolic stable pair
is trivial, hence
there is a universal parabolic stable pairs on 
(\ref{space}). Hence
 the scheme (\ref{space})
is the desired fine moduli space. 
\end{proof}

\begin{rmk}\label{rmk:quasi}
If we take $H \in \lvert \oO_X(h) \rvert$ 
which does not satisfy the condition 
in Lemma~\ref{lem:trans}, then 
the moduli functor $\mM_n^{\rm{par}}(X, \beta)$
may not be represented by a projective scheme.
However in the argument of Theorem~\ref{thm:moduli}, let us replace  
$U \subset Q$ in (\ref{UQ}) by the open subscheme 
\begin{align*}
U^{\circ} \subset U \subset Q, 
\end{align*}
corresponding to quotients 
$V\otimes \oO_X(-m) \twoheadrightarrow F$
where $F$ intersects with $H$ transversally. 
Then following the same arguments, we can
show that $\mM_n^{\rm{par}}(X, \beta)$
is represented by a quasi-projective 
scheme over $\mathbb{C}$. 
\end{rmk}

\begin{rmk}
It is a natural question whether there is a symmetric perfect
obstruction theory on $M_n^{\rm{par}}(X, \beta)$ or not. 
This question seems to be not obvious
by the following reason. For instance 
in PT stable pair case~\cite{PT}, 
for a PT stable pair $(F, s)$ as in (\ref{PT/pair}), 
we have the associated object
in the derived category, 
$I^{\bullet} =(\oO_X \to F)$.
The perfect obstruction theory can be 
constructed by taking the cone of the trace morphism, 
\begin{align*}
\dR \Hom(I^{\bullet}, I^{\bullet}) \stackrel{\rm{tr}}{\to}
\dR \Hom(\oO_X, \oO_X). 
\end{align*}
In our case, by regarding a parabolic stable pair 
$(F, s)$ as a pair $N_{H/X}[-1]\stackrel{s}{\to} F$, 
we can associate $E \in \Coh(X)$ which fits into 
the exact sequence, 
\begin{align*}
0 \to F \to E \to N_{H/X} \to 0, 
\end{align*}
whose extension class is $s$. 
One might expect that, as an analogy of the trace map, 
there may be a natural morphism, 
\begin{align}\label{trace:N}
\dR \Hom(E, E) \to \dR \Hom(N_{H/X}, N_{H/S}), 
\end{align}
and taking its cone may give a perfect obstruction theory. 
Unfortunately there is no such a map (\ref{trace:N}), so 
we cannot discuss as in the PT stable pair case. 
\end{rmk}

\section{Wall-crossing formula}\label{sec:wall}
In this section, we introduce invariants 
counting parabolic stable pairs, and 
show that they are related to 
generalized DT invariants introduced in~\cite{JS}, \cite{K-S}. 
As in the previous section, $X$ is a smooth projective 
Calabi-Yau 3-fold over $\mathbb{C}$. 
\subsection{Counting invariants}\label{subsec:Count}
Let us take $d\in \mathbb{Z}_{>0}$
and a divisor 
$H \subset X$
satisfying the condition 
in Lemma~\ref{lem:trans}. By Theorem~\ref{thm:moduli}, 
for $n\in \mathbb{Z}$ and $\beta \in H_2(X, \mathbb{Z})$
with $\omega \cdot \beta \le d$, 
there is a fine moduli space 
$M_n^{\rm{par}}(X, \beta)$ which 
parameterizes parabolic stable pairs 
$(F, s)$ with $[F]=\beta$ and $\chi(F)=n$. 

Recall that, for any $\mathbb{C}$-scheme $M$,
Behrend~\cite{Beh} constructs
a canonical constructible function 
\begin{align*}
\nu \colon M \to \mathbb{Z},
\end{align*}
satisfying the following properties. 
\begin{itemize}
\item For $p\in M$, suppose that 
there is an analytic open neighborhood 
$p\in U$, a complex manifold $V$
and a holomorphic function $f\colon V\to \mathbb{C}$
such that $U\cong \{ df=0\}$. 
Then $\nu(p)$ is given by 
\begin{align*}
\nu(p)=(-1)^{\dim V}(1-\chi(M_p(f))). 
\end{align*}
Here $M_p(f)$ is the Milnor fiber of $f$ at $p$. 
\item If there is a symmetric perfect obstruction theory 
on $M$, we have 
\begin{align*}
\deg 
[M]^{\rm{vir}} &=\int_{M} \nu d\chi \\
&\cneq \sum_{m \in \mathbb{Z}} m \chi(\nu^{-1}(m)). 
\end{align*}
\end{itemize}
We define the invariant 
$\mathrm{DT}_{n, \beta}^{\rm{par}}$ as follows. 
\begin{defi}\label{defi:parainv}
We define $\mathrm{DT}_{n, \beta}^{\rm{par}} \in \mathbb{Z}$ to be
\begin{align*}
\mathrm{DT}_{n, \beta}^{\rm{par}} =
\int_{M_n^{\rm{par}}(X, \beta)}
\nu_M d \chi.
\end{align*}
Here $\nu_{M}$ is the Behrend function on 
$M_n^{\rm{par}}(X, \beta)$.  
\end{defi}
\begin{rmk}
We remark that the invariant $\mathrm{DT}_{n, \beta}^{\rm{par}}$
also depends on the choice of $\omega$ and $H$. 
We omit these notation in $\mathrm{DT}_{n, \beta}^{\rm{par}}$
for the simplification. 
\end{rmk}
In the local $(-1, -1)$-curve example, 
the above invariant can be computed very easily. 
\begin{exam}\label{exam:-1-1}
Let $f \colon X \to Y$, 
$C\subset X$ be as in Example~\ref{exam:-1}.
Suppose that there is $H \in \lvert \oO_X(1) \rvert$
which intersects with $C$ at a one point.  
Then by the classification of parabolic 
stable pairs in Example~\ref{exam:-1}, we have 
\begin{align*}
M_n^{\rm{par}}(X, m[C])
=\left\{ \begin{array}{cc}
\Spec \mathbb{C}, & m=1, \\
\emptyset, & m\ge 2.
\end{array}
   \right.
\end{align*}
Therefore we have 
\begin{align*}
\mathrm{DT}^{\rm{par}}_{n, m[C]}
=\left\{ \begin{array}{cc}
1, & m=1, \\
0, & m\ge 2.
\end{array}
   \right.
\end{align*}
\end{exam}
We introduce the generating series of 
$\mathrm{DT}_{n, \beta}^{\rm{par}}$ as follows. 
For $\mu \in \mathbb{Q}$, we set
\begin{align}\label{para:series}
\mathrm{DT}^{\rm{par}}(\mu, d) \cneq 
1+
\sum_{\begin{subarray}{c} 0<\beta \cdot \omega \le d \\
n/\omega \cdot \beta =\mu
\end{subarray}}
\DT_{n, \beta}^{\rm{par}}q^n t^{\beta}. 
\end{align}
The above series is contained in the ring $\Lambda_{\le d}$ 
defined as follows. First the ring $\Lambda$
is defined by 
\begin{align*}
\Lambda \cneq \bigoplus_{n \in \mathbb{Z}, \beta>0}
\mathbb{Q} q^n t^{\beta}. 
\end{align*}
Here $\beta>0$ means that $\beta$ is a numerical class of 
an effective one cycle on $X$. 
The ring $\Lambda$ is defined by the quotient ring 
of $\Lambda$ by the ideal generated by 
$q^n t^{\beta}$ with $\omega \cdot \beta >d$. 
We have 
\begin{align*}
\mathrm{DT}^{\rm{par}}(\mu, d) \in \Lambda_{\le d}.
\end{align*}
\subsection{Category of parabolic pairs}\label{subsec:category}
Let $(F, s)$ be a parabolic stable pair as in Definition~\ref{defi:para}. 
As we observed in Subsection~\ref{subsec:relation}, 
the pair $(F, s)$ can be also interpreted as a pair, 
\begin{align}\label{pair:cate}
N_{H/X}[-1] \stackrel{s}{\to} F. 
\end{align}
In this subsection and next subsection, 
we construct the category of pairs as above, 
and interpret the stability condition in Definition~\ref{defi:para}
in terms of the pair (\ref{pair:cate}). 

Let $(X, \omega, d, H)$ be as in Lemma~\ref{lem:trans}. 
For $\mu \in \mathbb{Q}$, 
the category $\aA(\mu, d)$ is defined as follows. 
\begin{defi}
We define the category $\aA(\mu, d)$ to be the category of 
pairs, 
\begin{align*}
N_{H/X}^{\oplus r}[-1] \stackrel{s}{\to} F, 
\end{align*}
where $r\in \mathbb{Z}_{\ge 0}$
and $F$ is a one dimensional $\omega$-semistable
sheaf satisfying
\begin{align*}
\mu_{\omega}(F)=\mu, \quad \omega \cdot [F] \le d. 
\end{align*}
For two objects
 $E_i =(N_{H/X}^{\oplus r_i}[-1] \stackrel{s_i}{\to} F_i) \in \aA(\mu, d)$
with $i=1, 2$, the set of morphisms
$\Hom(E_1, E_2)$ is given by 
the commutative diagram, 
\begin{align}\label{mor:para}
\xymatrix{
N_{H/X}^{\oplus r_1}[-1] \ar[r]^{s_1} \ar[d]_{\phi \otimes 
\mathrm{id}_{N_{H/X}}} 
& F_1  \ar[d]^{g} \\
N_{H/X}^{\oplus r_2}[-1] \ar[r]^{s_2} & F_2,
}
\end{align}
for $\phi \in M(r_1, r_2)$ and $g \in \Hom(F_1, F_2)$. 
\end{defi}
Let $g \colon F_1 \to F_2$ be a 
morphism of one dimensional $\omega$-semistable 
sheaves with $\mu_{\omega}(F_i)=\mu$. 
It is easy to see that $\Ker(g)$, $\Imm(g)$ and 
$\Cok(g)$ are all $\omega$-semistable 
sheaves with $\mu_{\omega}(\ast)=\mu$. 
Also let us take an exact sequence of $\omega$-semistable
one dimensional sheaves,
\begin{align*}
0 \to F_1 \to F_2 \to F_3 \to 0.
\end{align*}
If $\omega \cdot [F_2] \le d$, then 
we have the 
exact sequence by Lemma~\ref{lem:Ext},
\begin{align}\label{Ex:Ext}
0 \to \Ext_{X}^1(N_{H/X}, F_1) \to \Ext_{X}^1(N_{H/X}, F_2) \to 
\Ext_X^1(N_{H/X}, F_3) \to 0.
\end{align}
The condition $\omega \cdot [F_2] \le d$
is required for the divisor $H$ to intersect 
with $F_i$ transversally. 
Hence for a morphism (\ref{mor:para}), 
we can define its kernel, image and cokernel
in $\aA(\mu, d)$. 
For instance, the kernel is given by 
\begin{align*}
\Ker(\phi) \otimes N_{H/X}[-1] \to \Ker(g). 
\end{align*}
The notion of monomorphisms, 
epimorphisms and 
exact sequences in $\aA(\mu, d)$
can be defined in an usual way
using the above kernel, image and the cokernel. 

However for two objects
\begin{align}\label{two:obj}
E_i=(N_{H/X}^{\oplus r_i}[-1] \stackrel{s_i}{\to} F_i) \in \aA(\mu, d), 
\end{align}
with $i=1, 2$,  
the set of extensions
in $\aA(\mu, d)$,
\begin{align}\label{par:ext}
0 \to E_1 
\to (N_{H/X}^{\oplus r}[-1] \stackrel{s}{\to} F)
\to E_2 \to 0,
\end{align}
can be defined \textit{only if} 
the following condition holds:
\begin{align}\label{restrict}
\omega \cdot ([F_1]+[F_2]) \le d.
\end{align}
In particular $E_1 \oplus E_2$ cannot be defined 
without the condition (\ref{restrict}).
This implies that $\aA(\mu, d)$ is not an abelian category. 

For our purpose, 
 we only use extensions (\ref{par:ext}) 
satisfying the condition (\ref{restrict}). 
Except the above restriction for the 
possible extensions, the category $\aA(\mu, d)$
behaves as if it is a $\mathbb{C}$-linear abelian category. 
For instance if the condition (\ref{restrict}) is satisfied, 
then the set of isomorphism 
classes of extensions (\ref{par:ext}), 
$\Ext^1(E_2, E_1)$, is a finite dimensional 
$\mathbb{C}$-vector space.
Furthermore the following lemma holds. 
\begin{lem}\label{lem:exseq}
For two objects (\ref{two:obj}), 
suppose that the condition (\ref{restrict}) holds. 
Then we have the 
following exact sequence of 
$\mathbb{C}$-vector spaces, 
\begin{align}\label{exact:HME}
0 &\to \Hom(E_2, E_1) \to 
M(r_2, r_1) \oplus \Hom(F_2, F_1) \\
& \to \Hom(N_{H/X}^{\oplus r_2}[-1], F_1)
\to \Ext^1(E_2, E_1) \to \Ext^1(F_2, F_1) \to 0. 
\end{align}
\end{lem}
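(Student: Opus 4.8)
The plan is to obtain the exact sequence (\ref{exact:HME}) by applying the functor $\Hom(E_2, -)$ (or rather its analogue in $\aA(\mu,d)$) to a suitable ``resolution'' of $E_1$, mimicking the standard long exact sequence computation for pairs as in~\cite[Section~13]{JS}. Concretely, for an object $E=(N_{H/X}^{\oplus r}[-1]\stackrel{s}{\to}F)$ in $\aA(\mu,d)$, there is a short exact sequence in $\aA(\mu,d)$ of the form
\begin{align*}
0\to (0\to F)\to E\to (N_{H/X}^{\oplus r}[-1]\to 0)\to 0,
\end{align*}
where $(0\to F)$ and $(N_{H/X}^{\oplus r}[-1]\to 0)$ denote the pairs with, respectively, zero map out of the zero source and zero target. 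First I would verify that this is a genuine short exact sequence in the sense defined just above the statement (using the kernel/image/cokernel constructions in $\aA(\mu,d)$, which are available because $\omega\cdot[F]\le d$), and that the condition (\ref{restrict}) for $E_2$ and $E_1$ guarantees that all the relevant extension and Hom groups between $E_2$ and the pieces appearing here are defined. The point of working with $E_1$ rather than $E_2$ on the resolved side is that the pieces $(0\to F_1)$ and $(N_{H/X}^{\oplus r_1}[-1]\to 0)$ have very simple Hom and Ext out of arbitrary objects.

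Next I would compute the two-term complex governing $\Hom$ and $\Ext^1$ of a pair into another pair. From the description of morphisms (\ref{mor:para}), $\Hom(E_2,E_1)$ is the kernel of the map
\begin{align*}
M(r_2,r_1)\oplus \Hom(F_2,F_1)\longrightarrow \Hom(N_{H/X}^{\oplus r_2}[-1],F_1),\qquad
(\phi,g)\longmapsto g\circ s_2 - s_1\circ(\phi\otimes\mathrm{id}_{N_{H/X}}),
\end{align*}
and $\Ext^1(E_2,E_1)$ should be identified with an extension of $\Ext^1(F_2,F_1)$ by the cokernel of this same map. The cleanest way to see this is to regard a pair as a module over the triangular-type algebra built from $\mathbb{C}$ (acting on the $N_{H/X}^{\oplus r}$-part) and the category $\Coh_{\le 1}(X)_\mu$ (the $\omega$-semistable sheaves of slope $\mu$), so that the total Hom and Ext sit in a long exact sequence coming from the adjunction/triangular structure. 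Then the sequence (\ref{exact:HME}) is exactly the associated long exact sequence, truncated and using:
\begin{align*}
\Ext^i(N_{H/X}^{\oplus r_2}[-1],F_1)\cong (F_1\otimes\oO_H)^{\oplus r_2}\ \text{ for }i=1,\ =0\text{ for }i\neq 1,
\end{align*}
which is Lemma~\ref{lem:Ext}, together with the vanishing of $\Ext^{\ge 2}$ between one-dimensional sheaves with the transversality hypothesis, and with the fact (noted in the excerpt, giving (\ref{Ex:Ext})) that $\Hom(N_{H/X}, -)$ is exact on short exact sequences of such sheaves bounded by $d$. The place I expect the real care to be needed is checking that the naive ``$\Ext^1$ of a pair'' I write down is the \emph{same} as the $\Ext^1$ defined earlier via isomorphism classes of extensions (\ref{par:ext}) in $\aA(\mu,d)$, and that no $\Ext^2$-type obstruction term sneaks in; this amounts to verifying that the category of pairs $\aA(\mu,d)$, on the subcategory where (\ref{restrict}) holds, behaves homologically like the module category over the triangular algebra, so that the five-term exact sequence does indeed terminate with $\Ext^1(F_2,F_1)\to 0$.

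The main obstacle is therefore bookkeeping rather than a deep difficulty: one must set up the homological algebra of the non-abelian category $\aA(\mu,d)$ carefully enough (restricting throughout to objects and extensions satisfying the size bound $\le d$) that the standard ``pairs'' long exact sequence is legitimate, and one must invoke Lemma~\ref{lem:Ext} to collapse all the $N_{H/X}$-contributions into the single term $\Hom(N_{H/X}^{\oplus r_2}[-1],F_1)$. Once that is in place, exactness of (\ref{exact:HME}) is immediate from the long exact sequence of $\Hom(E_2,-)$ applied to the resolution of $E_1$ displayed above, so I would structure the write-up as: (1) the resolution and why it lies in $\aA(\mu,d)$; (2) the elementary Hom/Ext computations for the two simple pieces; (3) assembling the long exact sequence and reading off (\ref{exact:HME}).
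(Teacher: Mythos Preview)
Your proposal is essentially correct in outline, and the two-term complex you write down
\[
M(r_2,r_1)\oplus \Hom(F_2,F_1)\longrightarrow \Hom(N_{H/X}^{\oplus r_2}[-1],F_1),\qquad
(\phi,g)\longmapsto g\circ s_2 - s_1\circ(\phi\otimes\id_{N_{H/X}})
\]
is exactly the right object. You also correctly identify the one point requiring care: that the ``$\Ext^1$'' defined ad~hoc as isomorphism classes of short exact sequences in $\aA(\mu,d)$ really does agree with the cokernel-plus-$\Ext^1(F_2,F_1)$ description. (One small remark: the short exact sequence $0\to(0\to F_1)\to E_1\to(N_{H/X}^{\oplus r_1}[-1]\to 0)\to 0$ you open with, if you literally apply $\Hom(E_2,-)$ to it, does \emph{not} produce the terms $M(r_2,r_1)\oplus\Hom(F_2,F_1)$ directly, since for instance $\Hom(E_2,(0\to F_1))=\{g:g\circ s_2=0\}\subsetneq\Hom(F_2,F_1)$. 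Your actual argument uses the triangular-algebra/two-term-complex viewpoint instead, which is the correct one; the resolution is a red herring here.)

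The paper takes a different and more elementary route, avoiding any appeal to long exact sequences or to the triangular-algebra formalism. It builds the sequence from the right-hand end, by hand: first the forgetful map $\gamma_1\colon \Ext^1(E_2,E_1)\to\Ext^1(F_2,F_1)$ is shown to be surjective using (\ref{Ex:Ext}); next $\Ker\gamma_1$ is identified concretely as those extensions in which the underlying sheaf sequence splits, and these are seen to be parametrized by the off-diagonal entry $s'\in\Hom(N_{H/X}^{\oplus r_2}[-1],F_1)$, yielding a surjection $\gamma_2$; then $\Ker\gamma_2$ is analyzed by asking when such an extension is split in $\aA(\mu,d)$, which happens iff $s'=s_1\circ\phi-\psi\circ s_2$ for some $(\phi,\psi)$, giving $\gamma_3$; finally $\Ker\gamma_3=\Hom(E_2,E_1)$ by definition. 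The advantage of the paper's approach is that it never leaves the definitions: since $\aA(\mu,d)$ is not abelian and $\Ext^1$ is defined only as a set of extension classes, one never has to justify that a derived-functor or comma-category computation agrees with it. Your approach is more conceptual and would generalize better, but to make it self-contained you would end up doing essentially the same explicit splitting analysis that the paper does in order to close the gap you flagged.
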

\begin{proof}
Let $E=(N_{H/X}^{\oplus r}[-1] \stackrel{s}{\to} F)$ be 
an object in $\aA(\mu, d)$ which fits into 
an exact sequence (\ref{par:ext}).
Then we have the exact sequence of sheaves,
\begin{align}\label{F1FF2}
0\to F_1 \to F \to F_2 \to 0, 
\end{align} 
hence we obtain a linear map, 
\begin{align*}
\gamma_1 \colon \Ext^1(E_2, E_1) \to \Ext^1(F_2, F_1), 
\end{align*}
sending $E$ to $F$. 
The map $\gamma_1$ is surjective by the exact sequence (\ref{Ex:Ext}) 
applied to (\ref{F1FF2}). 

Let us look at the kernel of $\gamma_1$. 
The kernel of $\gamma_1$ consists of 
isomorphism classes of extensions in $\mathrsfs{A}(\mu, d)$
of the form
\begin{align*}
\xymatrix{
0 \ar[r] & N_{H/X}^{\oplus r_1}[-1]  \ar[r]^{i_1} \ar[d]^{s_1} 
& N_{H/X}^{\oplus r_1 + r_2}[-1] \ar[r]^{i_2}
 \ar[d]^{s} & N_{H/X}^{\oplus r_2}[-1]
\ar[r] \ar[d]^{s_2} & 0, \\
0 \ar[r] & F_1 \ar[r]^{j_1} & F_1 \oplus F_2 \ar[r]^{j_2} & F_2 \ar[r] & 0.
}
\end{align*}
Here $i_1, j_1$ are 
embedding into corresponding factors, 
 and $i_2, j_2$ are projections onto 
corresponding factors.  
The above extension 
is given if we give a $\Hom(N_{H/X}^{\oplus r_2}[-1], F_1)$-factor 
of $s$, hence we obtain a surjection, 
\begin{align*}
\gamma_2 \colon \Hom(N_{H/X}^{\oplus r_2}[-1], F_1) \twoheadrightarrow 
\Ker(\gamma_1). 
\end{align*}
Next we look at the kernel of $\gamma_2$. 
An element $s' \in \Hom(N_{H/X}^{\oplus r_2}[-1], F_1)$
is contained in $\Ker(\gamma_2)$ if and only if
there are split projections 
$i_1', j_1'$ of $i_1, j_1$ respectively 
such that the following diagram commutes, 
\begin{align}\label{diag:N}
\xymatrix{
N_{H/X}^{\oplus r_1 +r_2}[-1] \ar[r]^{i_1'} 
\ar[d]_{s}
  & 
N_{H/X}^{\oplus r_1}[-1] \ar[d]^{s_1}, \\
F_1 \oplus F_2 \ar[r]^{j_1'} & F_1,
}
\end{align}
where $s$ is given by the matrix, 
\begin{align*}
s= \left(\begin{array}{cc}
s_1 & s' \\
0 & s_2
\end{array}
\right). 
\end{align*}
Let $\phi$ and $\psi$ be 
$\Hom(N_{H/X}^{\oplus r_2}[-1], N_{H/X}^{\oplus r_1}[-1])$
and $\Hom(F_2, F_1)$-components of 
$i_1'$ and $j_1'$ respectively. Then the diagram (\ref{diag:N}) commutes
if and only if the following equality holds:
\begin{align*}
s'=s_1 \circ \phi -\psi \circ s_2. 
\end{align*}
Hence we obtain the surjection, 
\begin{align*}
\gamma_3 \colon 
M(r_2, r_1) \oplus \Hom(F_2, F_1) \twoheadrightarrow \Ker(\gamma_2), 
\end{align*}
sending $(\phi, \psi)$
to $s_1 \circ \phi- \psi \circ s_2$. 

Finally the kernel of $\gamma_3$
coincides with $\Hom(E_2, E_1)$
by its definition. Therefore we obtain the 
exact sequence (\ref{exact:HME}). 
\end{proof}

\subsection{Weak stability conditions on $\aA(\mu, d)$}
In this subsection, we construct weak stability 
conditions on $\aA(\mu, d)$ and investigate their 
relationship to parabolic stability. 
First we define the slope function on $\aA(\mu, d)$. 
\begin{defi}
For a non-zero
 object $E=(N_{H/X}^{\oplus r}[-1] \stackrel{s}{\to} F) \in \aA(\mu, d)$
and $\alpha \in \mathbb{Q}$, 
we set $\widehat{\mu}_{\alpha}(E)$ to be 
\begin{align*}
\widehat{\mu}_{\alpha}(E) =
\left\{ \begin{array}{cc}
\alpha, & \mbox{ if } r\neq 0, \\
\mu (=\mu_{\omega}(F)), & \mbox{ if } r=0. 
\end{array}  \right. 
\end{align*}
\end{defi}
The above $\widehat{\mu}_{\alpha}$-slope function
satisfies the following weak seesaw property. 
Let $0 \to E_1 \to E_2 \to E_3 \to 0$
be an exact sequence in $\aA(\mu, d)$
with $E_1, E_3 \neq 0$
in the sense explained in the previous subsection. 
Then either one of the following 
conditions hold:
\begin{align*}
\widehat{\mu}_{\alpha}(E_1) \ge \widehat{\mu}_{\alpha}(E_2) \ge \widehat{\mu}_{\alpha}(E_3), \\
\widehat{\mu}_{\alpha}(E_1) \le \widehat{\mu}_{\alpha}(E_2) \le \widehat{\mu}_{\alpha}(E_3).
\end{align*}
The $\widehat{\mu}_{\alpha}$-stability on $\aA(\mu, d)$
is defined as follows. 
\begin{defi}
An object $E \in \aA(\mu, d)$ is 
$\widehat{\mu}_{\alpha}$-(semi)stable if for any 
exact sequence $0 \to E_1 \to E_2 \to E_3 \to 0$
in $\aA(\mu, d)$ with $E_1, E_3 \neq 0$, 
we have the inequality, 
\begin{align*}
\widehat{\mu}_{\alpha}(E_1)<(\le) \widehat{\mu}_{\alpha}(E_3). 
\end{align*}
\end{defi}
The $\mu_{\alpha}$-stability satisfies the usual 
Harder-Narasimhan (HN) property. 
\begin{lem}\label{lem:NH}
For any $E \in \aA(\mu, d)$, there is a filtration in $\aA(\mu, d)$, 
\begin{align}\label{HN}
0=E_0 \subset E_1 \subset \cdots \subset E_N=E,  
\end{align}
with $N\le 2$
such that each $F_i=E_i/E_{i-1}$ is $\widehat{\mu}_{\alpha}$-semistable 
satisfying 
$\widehat{\mu}_{\alpha}(F_i)>\widehat{\mu}_{\alpha}(F_{i+1})$
for all $i$.  
The exact sequence (\ref{HN}) is unique up to isomorphism. 
\end{lem}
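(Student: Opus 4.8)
The plan is to establish the Harder–Narasimhan property for $\widehat{\mu}_{\alpha}$-stability on $\aA(\mu,d)$ by a direct case analysis, exploiting the very simple structure of the slope function, which takes only the two values $\alpha$ and $\mu$. The crucial point is that for an object $E=(N_{H/X}^{\oplus r}[-1]\xrightarrow{s}F)$ with $r\neq 0$ we have $\widehat{\mu}_{\alpha}(E)=\alpha$, while any object with $r=0$ has slope $\mu$. So there are essentially two regimes depending on the sign of $\alpha-\mu$, and in each the filtration (\ref{HN}) can have length at most $2$.

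First I would dispose of the case $\alpha=\mu$: then $\widehat{\mu}_{\alpha}$ is constant, every object is $\widehat{\mu}_{\alpha}$-semistable, and the trivial filtration $0\subset E$ works with $N\le 1$. Next, suppose $\alpha>\mu$ (the case $\alpha<\mu$ is symmetric). Given $E=(N_{H/X}^{\oplus r}[-1]\xrightarrow{s}F)$, if $r=0$ then $E$ is a one-dimensional $\omega$-semistable sheaf of slope $\mu$, hence $\widehat{\mu}_{\alpha}$-semistable, and we are done with $N\le 1$. If $r\neq 0$, set $E_1=(0\to F')$ where $F'=\ker(F\to \mathrm{something})$ — more precisely, let $E_1$ be the subobject $(0\to F)\hookrightarrow E$ obtained by taking $r_1=0$ and the identity on $F$, which sits in $\aA(\mu,d)$ since $\omega\cdot[F]\le d$, and then $E/E_1=(N_{H/X}^{\oplus r}[-1]\to 0)$. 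One checks $E_1$ has slope $\mu$ and $E/E_1$ has slope $\alpha>\mu$, so this is the \emph{wrong} order; instead one should take $E_1$ to be the "purely $N_{H/X}$" subobject. The correct construction: for $r\neq 0$, the pair $(s\colon N_{H/X}^{\oplus r}[-1]\to F)$ has an image subsheaf; but since all subobjects in $\aA(\mu,d)$ with the same slope $\mu$ are just $\omega$-semistable sheaves of slope $\mu$, I would argue that $E$ is already $\widehat{\mu}_{\alpha}$-semistable when $\alpha>\mu$: any sub $E_1\subset E$ with $E_1,E/E_1\neq 0$ either has $E_1=(0\to F')$ with slope $\mu<\alpha=\widehat{\mu}_{\alpha}(E/E_1)$ if $E/E_1$ still has nonzero $N_{H/X}$-part, or has $E_1$ with nonzero $N_{H/X}$-part and then $\widehat{\mu}_{\alpha}(E_1)=\alpha$; in the latter case I must check $E/E_1$ cannot have slope $<\alpha$, i.e. cannot be a pure sheaf, which follows because the $N_{H/X}^{\oplus r}$ surjects onto the $N_{H/X}$-part of the quotient (the matrix $\phi$ of the morphism is, up to the category's exactness, surjective on the $N_{H/X}$ factors). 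So for $\alpha>\mu$ every object is semistable and $N\le 1$. For $\alpha<\mu$, the dual analysis shows the HN filtration has the $N_{H/X}$-heavy part as the quotient: one takes $E_1=(0\to F)$ (slope $\mu$) and $E/E_1$ the remaining $N_{H/X}^{\oplus r}[-1]$ (slope $\alpha<\mu$), giving exactly $N=2$ with decreasing slopes, or $N\le 1$ when $r=0$.

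Having exhibited the filtration, I would verify its properties: each graded piece is $\widehat{\mu}_{\alpha}$-semistable (the sheaf part $F$ because it is $\omega$-semistable of slope $\mu$ by hypothesis, hence has no destabilizing sub in $\aA(\mu,d)$; the $N_{H/X}^{\oplus r}[-1]$ part because any subobject also has $r'\neq 0$ and hence slope $\alpha$, using that $M(r',r)$-morphisms give genuine sub-pairs), and the slopes strictly decrease, which holds since $\mu\neq\alpha$ in the only case where $N=2$. Uniqueness up to isomorphism I would get from the standard argument: if $0\subset E_1'\subset E$ were another HN filtration, then $\Hom(E_1',E/E_1)=0$ by the slope inequality together with the weak seesaw property and semistability of the pieces (using Lemma~\ref{lem:exseq} to compute $\Hom$ in $\aA(\mu,d)$), forcing $E_1'\subset E_1$ and symmetrically $E_1\subset E_1'$. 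The main obstacle I anticipate is being careful about which of the two pieces sits as sub versus quotient — this flips with the sign of $\alpha-\mu$ — and verifying that a short exact sequence in $\aA(\mu,d)$ with the $N_{H/X}$-part living only in the middle term cannot occur, i.e. that the $N_{H/X}^{\oplus r_2}[-1]$ of a quotient is genuinely a quotient of the middle $N_{H/X}^{\oplus r}[-1]$; this is where I would lean on the explicit description of morphisms in (\ref{mor:para}) and kernels/cokernels from Subsection~\ref{subsec:category}, together with the restriction (\ref{restrict}) ensuring all relevant extensions and exact sequences actually live in the category.
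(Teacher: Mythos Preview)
Your case analysis contains a genuine error in the $\alpha>\mu$ regime. You claim that every object $E=(N_{H/X}^{\oplus r}[-1]\to F)$ with $r\neq 0$ is $\widehat{\mu}_{\alpha}$-semistable, arguing that if a subobject $E_1$ carries nonzero $N_{H/X}$-part then the quotient $E/E_1$ ``cannot be a pure sheaf''. This is false: nothing prevents $E_1$ from absorbing \emph{all} of the $N_{H/X}$-part, i.e.\ having $r_1=r$, in which case $E/E_1=(0\to F'')$ has slope $\mu<\alpha=\widehat{\mu}_{\alpha}(E_1)$ and destabilizes $E$. Concretely, the direct sum $(N_{H/X}^{\oplus r}[-1]\to 0)\oplus(0\to F)$ with $F\neq 0$ is not semistable for $\alpha>\mu$; more to the point, Proposition~\ref{prop:stab}(iii) and Corollary~\ref{cor:HN}(iii) show that for $r=1$ the semistable objects are exactly the parabolic stable pairs, and the HN filtration of a non-stable object has $E_1=(N_{H/X}[-1]\to F')$ a parabolic stable pair and $E/E_1=(0\to F'')$. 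So in this regime the correct HN filtration is not a naive ``$N$-part versus sheaf-part'' split, and constructing it by hand requires identifying the maximal destabilizing subobject, which for general $r$ is not immediate.

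The paper avoids this difficulty entirely by taking an abstract route: it observes that for any nonzero object one has $r\ge 0$, $\omega\cdot[F]\ge 0$ with at least one positive, so there are no infinite chains of strict sub- or quotient objects in $\aA(\mu,d)$; this chain condition is exactly the hypothesis of the general HN existence criterion in~\cite[Proposition~2.12]{Tcurve1}, which yields existence and uniqueness in one stroke. The bound $N\le 2$ then follows immediately from $\widehat{\mu}_{\alpha}(\ast)\in\{\alpha,\mu\}$. Your approach of explicit construction could be salvaged, but you would need to actually produce the maximal destabilizing subobject in the $\alpha>\mu$ case (and for arbitrary $r$), which is more work than invoking the abstract criterion.
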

\begin{proof}
Note that for any non-zero object $(N_{H/X}^{\oplus r}[-1] \to F) \in 
\aA(\mu, d)$, we have $r\ge 0$, $\omega \cdot [F] \ge 0$
and either one of them is non-zero. 
Hence there are no infinite sequences in $\aA(\mu, d)$, 
\begin{align*}
E&=E_0 \supset E_1 \supset \cdots \supset E_n \supset \cdots, \\
E&=E_0 \twoheadrightarrow E_1 \twoheadrightarrow \cdots \twoheadrightarrow
E_n \twoheadrightarrow \cdots. 
\end{align*}
Therefore the criterion in~\cite[Proposition~2.12]{Tcurve1} is satisfied, 
and there are HN filtrations 
with respect to $\widehat{\mu}_{\alpha}$-stability. 
Since $\widehat{\mu}_{\alpha}(\ast) \in \{\alpha, \mu\}$, 
the HN filtrations are at most 2-steps, i.e. $N\le 2$. 
\end{proof}
Next we see the relationship between 
$\widehat{\mu}_{\alpha}$-stability and parabolic stability. 
\begin{prop}\label{prop:stab}
For an object 
\begin{align}\label{Eobj}
E=(N_{H/X}[-1] \to F) \in \aA(\mu, d),
\end{align}
we have the following. 

(i) If $\alpha<\mu$, then $E$ is $\widehat{\mu}_{\alpha}$-semistable 
if and only if $F=0$. 

(ii) If $\alpha=\mu$, then any object $E$ as in (\ref{Eobj})
is $\widehat{\mu}_{\alpha}$-semistable. 

(iii) If $\alpha>\mu$, then $E$ is $\widehat{\mu}_{\alpha}$-semistable 
if and only if it is $\widehat{\mu}_{\alpha}$-stable, if and only if 
the pair  
\begin{align}\label{Fs}
(F, s), \quad s\in F\otimes \oO_H, 
\end{align}
determined by (\ref{Eobj})
and Lemma~\ref{lem:Ext} is a parabolic stable pair. 
\end{prop}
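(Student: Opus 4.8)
The plan is to analyze each of the three cases $\alpha < \mu$, $\alpha = \mu$, $\alpha > \mu$ separately, using the definition of $\widehat{\mu}_{\alpha}$-stability together with the structure of objects $E = (N_{H/X}[-1] \xrightarrow{s} F)$ in $\aA(\mu,d)$ having rank $r = 1$. The essential observation is that sub- and quotient objects in $\aA(\mu,d)$ come in two flavors: those with rank $0$ (of the form $(0 \to F')$, i.e.\ genuine semistable sheaves of slope $\mu$), which have $\widehat{\mu}_{\alpha}$-slope equal to $\mu$, and those with rank $1$, which have $\widehat{\mu}_{\alpha}$-slope equal to $\alpha$. For $E$ as in \eqref{Eobj}, any short exact sequence $0 \to E_1 \to E \to E_3 \to 0$ with $E_1, E_3 \neq 0$ must have exactly one of $E_1, E_3$ of rank $1$ and the other of rank $0$ (since the ranks add to $1$). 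So the only sub-objects of rank $0$ are of the form $(0 \to F')$ with $F' \subset F$ and $\mu_\omega(F') = \mu$, giving quotient $(N_{H/X}[-1] \to F/F')$ of rank $1$; and the only rank-$1$ sub-objects are of the form $(N_{H/X}[-1] \xrightarrow{s'} F'')$ with $F'' \subset F$, $\mu_\omega(F'') = \mu$, and $s$ factoring through $F''$, with quotient $(0 \to F/F'')$ of rank $0$.

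For (i), if $\alpha < \mu$: take the rank-$0$ sub-object $E_1 = (0 \to F)$ with quotient $E_3 = (N_{H/X}[-1] \to 0)$; then $\widehat{\mu}_{\alpha}(E_1) = \mu_\omega(F) = \mu > \alpha = \widehat{\mu}_{\alpha}(E_3)$, violating semistability unless $F = 0$ (in which case $E_1 = 0$ and the sequence is not of the forbidden type). Conversely if $F = 0$ there is nothing to check. For (ii), if $\alpha = \mu$: every sub-object and quotient has $\widehat{\mu}_{\alpha}$-slope in $\{\alpha, \mu\} = \{\mu\}$, so the semistability inequality $\widehat{\mu}_{\alpha}(E_1) \le \widehat{\mu}_{\alpha}(E_3)$ is always an equality, hence automatically satisfied. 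These two cases are essentially bookkeeping.

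For (iii), $\alpha > \mu$: first note that since $\widehat{\mu}_\alpha$ takes only the values $\alpha$ and $\mu$ with $\alpha > \mu$, and every destabilizing sequence has $E_1$ of one rank and $E_3$ of the other, semistability and stability coincide for $E$ (a strict inequality is forced whenever the slopes differ, and they cannot be made equal). Next I would show the equivalence with parabolic stability of $(F,s)$ by checking both the $\omega$-semistability of $F$ and the third condition of Definition~\ref{defi:para} translate exactly into the $\widehat{\mu}_\alpha$-stability conditions. If $E$ is $\widehat{\mu}_\alpha$-stable: for any $F' \subsetneq F$ with $\mu_\omega(F') = \mu$, the sub-object $(0 \to F')$ has slope $\mu$ and quotient of slope $\alpha$, so $\mu < \alpha$ is automatic—this gives no constraint; but we must also consider whether $F'$ could destabilize $F$ as a sheaf. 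Actually the $\omega$-semistability of $F$ is built into the definition of $\aA(\mu,d)$ already, so that part is free. The real content is: given a surjection $\pi \colon F \twoheadrightarrow F''$ with $\mu_\omega(F'') = \mu$ and $(\pi \otimes \oO_H)(s) = 0$, we get (using Lemma~\ref{lem:Ext} to see that $(\pi \otimes \oO_H)(s) = 0$ means $s$ factors through $\Ker(\pi) = F'$) a rank-$1$ sub-object $(N_{H/X}[-1] \xrightarrow{s} F')$ with slope $\alpha$, and a rank-$0$ quotient $(0 \to F'')$ with slope $\mu < \alpha$, contradicting $\widehat{\mu}_\alpha$-stability. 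Conversely, if $(F,s)$ is a parabolic stable pair, I would take an arbitrary destabilizing sequence in $\aA(\mu,d)$ and run this argument in reverse: a rank-$1$ sub-object $(N_{H/X}[-1] \xrightarrow{s'} F'')$ with $F'' \subsetneq F$ forces $s$ to factor through $F''$, so $(\pi \otimes \oO_H)(s) = 0$ for $\pi \colon F \twoheadrightarrow F/F''$, contradicting parabolic stability; and a rank-$0$ sub-object never destabilizes since its slope is $\mu < \alpha$.

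The main obstacle will be the careful handling of the correspondence between ``$s$ factors through $F'' \subset F$'' and ``$(\pi \otimes \oO_H)(s) = 0$ for $\pi \colon F \twoheadrightarrow F/F''$'' via the identification $\Ext^1_X(N_{H/X}, F) \cong F \otimes \oO_H$ of Lemma~\ref{lem:Ext}, and in particular making sure the functoriality of this identification (applied to the surjection $\pi$ and the resulting exact sequence \eqref{Ex:Ext}) is used correctly—i.e.\ that the map $\Ext^1_X(N_{H/X}, F) \to \Ext^1_X(N_{H/X}, F/F'')$ induced by $\pi$ is exactly $\pi \otimes \oO_H$ under this identification. I would also need to be slightly careful that in $\aA(\mu,d)$ the relevant kernels and cokernels exist, which requires the bound $\omega \cdot [F] \le d$; but since all sub-objects and quotients involved have $[F'] \le [F]$ in the effective cone, this bound is inherited, so the category-theoretic operations are all legitimate.
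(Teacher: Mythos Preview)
Your proposal is correct and follows essentially the same argument as the paper's proof: the same destabilizing sequence $(0\to F)\hookrightarrow E\twoheadrightarrow(N_{H/X}[-1]\to 0)$ for (i), the same triviality for (ii), and the same rank-splitting analysis $(a,b)\in\{(1,0),(0,1)\}$ together with Lemma~\ref{lem:Ext} for (iii). Your explicit remarks on why semistability equals stability in (iii), on the functoriality of the identification $\Ext^1_X(N_{H/X},F)\cong F\otimes\oO_H$, and on the inheritance of the bound $\omega\cdot[F']\le d$ are all correct and slightly more detailed than the paper, but the approach is the same.
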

\begin{proof}
(i) Suppose that $E$ is $\widehat{\mu}_{\alpha}$-semistable 
and $F\neq 0$. Then we have the 
exact sequence in $\aA(\mu, d)$, 
\begin{align*}
0 \to (0\to F) \to E \to (N_{H/X}[-1] \to 0) \to 0. 
\end{align*}
Since we have 
\begin{align*}
\widehat{\mu}_{\alpha}(0\to F)=\mu, \quad
\widehat{\mu}_{\alpha}(N_{H/X}[-1] \to F)=\alpha,
\end{align*}
the above sequence destabilize $E$, a contradiction. 

Conversely if $E=(N_{H/X}[-1] \to 0)$, then 
there is no exact sequence 
$0\to E_1 \to E \to E_2 \to 0$ with 
$E_1, E_2 \neq 0$. 
In particular $E$ is $\widehat{\mu}_{\alpha}$-stable. 

(ii) Obvious. 

(iii) Suppose that $E$ is $\widehat{\mu}_{\alpha}$-semistable 
and assume that there is a surjection 
$F \stackrel{\pi}{\twoheadrightarrow} F'$ with $\mu_{\omega}(F')=\mu$
such that $(\pi \otimes \oO_H)(s)=0$. Then by Lemma~\ref{lem:Ext}, 
there is an exact sequence in $\aA(\mu, d)$ of the form, 
\begin{align}\label{F'''}
0 \to (N_{H/X}[-1] \to F'') \to E \to (0 \to F') \to 0.
\end{align}
Since $\alpha>\mu$, the sequence (\ref{F'''})
destabilizes $E$, a contradiction. 

Conversely suppose that the pair (\ref{Fs})
is parabolic stable and take an exact sequence 
in $\aA(\mu, d)$, 
\begin{align*}
0\to (N_{H/X}^{\oplus a}[-1] \to F')
 \to E \to (N_{H/X}^{\oplus b}[-1] \to F'') \to 0. 
\end{align*}
Since $a+b=1$, we have the two 
possibilities, $(a, b)=(1, 0)$ and $(a, b)=(0, 1)$. 
When $(a, b)=(1, 0)$, then 
the surjection $F \to F''$ takes $s$ to $0$
by taking $\otimes \oO_H$, hence it 
contradicts to the parabolic stability. 
When $(a, b)=(0, 1)$, then 
we have 
\begin{align*}
\widehat{\mu}_{\alpha}(0 \to F') 
=\mu <\alpha =\widehat{\mu}_{\alpha}(N_{H/X}[-1] \to F''). 
\end{align*}
Hence $E$ is $\widehat{\mu}_{\alpha}$-stable. 
\end{proof}
As a corollary, we have the following. 
\begin{cor}\label{cor:later}
Let $(F, s)$ be a parabolic stable pair 
and $E=(N_{H/X}[-1] \to F)$ the associated 
object in $\aA(\mu, d)$ by Lemma~\ref{lem:Ext}.
Then we have 
\begin{align}\label{HomEE}
\Hom(E, E) =\mathbb{C}. 
\end{align} 
In particular the group $\Aut(F, s)$
defined in Lemma~\ref{lem:aut} is $\{\mathrm{id}_F\}$.  
\end{cor}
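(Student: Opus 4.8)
The plan is to compute $\Hom(E,E)$ explicitly via Lemma~\ref{lem:exseq} rather than appealing to a general ``stable implies simple'' principle, which is not automatic in the non-abelian category $\aA(\mu,d)$ (though Proposition~\ref{prop:stab}(iii) does reinterpret parabolic stability of $(F,s)$ as $\widehat{\mu}_{\alpha}$-stability of $E$ for $\alpha>\mu$). Applying Lemma~\ref{lem:exseq} with $E_1=E_2=E$ and identifying $\Hom(N_{H/X}[-1],F)\cong F\otimes\oO_H$ by Lemma~\ref{lem:Ext}, one gets an exact sequence
\begin{align*}
0 \to \Hom(E,E) \to \mathbb{C}\oplus\End(F) \stackrel{\delta}{\longrightarrow} F\otimes\oO_H, \qquad \delta(\phi,g)=\phi\cdot s-(g\otimes\oO_H)(s).
\end{align*}
Since $s=(\id_F\otimes\oO_H)(s)$, the image of $\delta$ coincides with the image of the $\mathbb{C}$-linear map $\Theta\colon\End(F)\to F\otimes\oO_H$, $g\mapsto(g\otimes\oO_H)(s)$, so $\dim_{\mathbb{C}}\Hom(E,E)=1+\dim\End(F)-\dim\im\Theta$. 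Hence everything reduces to proving that $\Theta$ is injective: then $\Hom(E,E)$ is one-dimensional, and as it contains $\id_E=(1,\id_F)$ we get $\Hom(E,E)=\mathbb{C}$.

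To prove $\Theta$ injective, I would take $g\in\End(F)$ with $g\neq 0$ and $(g\otimes\oO_H)(s)=0$, and factor $g=\iota\circ\pi$ with $\pi\colon F\twoheadrightarrow\im g$ the induced surjection and $\iota\colon\im g\hookrightarrow F$. First, $\mu_{\omega}(\im g)=\mu$: as a quotient of the $\omega$-semistable sheaf $F$ of slope $\mu$ one has $\mu_{\omega}(\im g)\geq\mu$, and as a subsheaf of $F$ one has $\mu_{\omega}(\im g)\leq\mu$. Next, $\im g$ is saturated in $F$: its saturation has the same fundamental one-cycle, hence Euler characteristic $\geq\chi(\im g)$, hence slope $\geq\mu$, so $\omega$-semistability of $F$ forces equality of slopes, hence of Euler characteristics, so the saturation is $\im g$. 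Thus $F/\im g$ is pure one-dimensional (or zero), and its one-cycle is dominated by $[F]$, hence it intersects $H$ transversally; arguing as in the proof of Lemma~\ref{lem:Ext}, multiplication by the section of $\oO_X(H)$ cutting out $H$ is injective on $F/\im g$, so tensoring $0\to\im g\to F\to F/\im g\to 0$ by $\oO_H$ shows that $\iota\otimes\oO_H\colon\im g\otimes\oO_H\to F\otimes\oO_H$ is injective. Consequently $0=(g\otimes\oO_H)(s)=(\iota\otimes\oO_H)\bigl((\pi\otimes\oO_H)(s)\bigr)$ yields $(\pi\otimes\oO_H)(s)=0$, contradicting parabolic stability of $(F,s)$ applied to the surjection $\pi$ (note $\mu_{\omega}(\im g)=\mu$). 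Hence $g=0$, $\Theta$ is injective, and $\Hom(E,E)=\mathbb{C}$.

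For the automorphism statement: if $g\in\Aut(F)$ satisfies $(g\otimes\oO_H)(s)=s$, then $(1,g)$ lies in $\Hom(E,E)=\mathbb{C}\cdot(1,\id_F)$, and comparing first coordinates gives $g=\id_F$; thus $\Aut(F,s)=\{\id_F\}$.

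I expect the saturation/purity step to be the main obstacle. If $F/\im g$ were not pure, there could be a contribution $\TOR_1^{\oO_X}(F/\im g,\oO_H)$ supported on $\Supp(F)\cap H$ obstructing injectivity of $\iota\otimes\oO_H$, and then vanishing of $(g\otimes\oO_H)(s)$ would not force $(\pi\otimes\oO_H)(s)=0$. The point that rescues the argument --- a subsheaf of a slope-$\mu$ semistable sheaf which itself attains slope $\mu$ is automatically saturated --- is precisely where $\omega$-semistability of $F$ enters, beyond its bare appearance in the definition of a parabolic stable pair.
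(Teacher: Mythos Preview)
Your proof is correct, but it takes a genuinely different route from the paper's.

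The paper's argument is a two-liner: by Proposition~\ref{prop:stab}(iii) the object $E$ is $\widehat{\mu}_{\alpha}$-stable for $\alpha>\mu$, and then $\Hom(E,E)=\mathbb{C}$ follows from the standard ``stable implies simple'' principle (with a reference to \cite[Corollary~1.2.8]{Hu}). The deduction of $\Aut(F,s)=\{\id_F\}$ is then exactly as you do it.

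You instead bypass the categorical stability interpretation entirely and compute $\Hom(E,E)$ directly from the exact sequence of Lemma~\ref{lem:exseq}, reducing everything to the injectivity of $\Theta\colon\End(F)\to F\otimes\oO_H$, $g\mapsto (g\otimes\oO_H)(s)$. Your saturation argument---that $\im g\subset F$ is automatically saturated because both sub and quotient slopes are pinned at $\mu$---is the right way to kill the potential $\TOR_1(F/\im g,\oO_H)$ obstruction, and the rest goes through cleanly.

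What each approach buys: the paper's proof is shorter and conceptually cleaner, but as you correctly flag, it leans on the Schur-lemma argument in a category $\aA(\mu,d)$ that is not abelian; one has to check (using the kernel/image/cokernel discussion preceding Lemma~\ref{lem:exseq} and an additivity-of-$\cl$ argument) that the usual proof still runs. Your approach trades brevity for self-containment: it uses only the sheaf-theoretic definition of parabolic stability, Lemma~\ref{lem:Ext}, and Lemma~\ref{lem:exseq}, and never needs to know that $E$ is $\widehat{\mu}_\alpha$-stable. It also makes explicit exactly where $\omega$-semistability of $F$ is used (the saturation step), which is a nice structural observation.
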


\begin{proof}
By Proposition~\ref{prop:stab}, the object $E$ is
$\widehat{\mu}_{\alpha}$-stable, hence 
 (\ref{HomEE}) follows from 
a general argument of stable 
objects. (cf.~\cite[Corollary~1.2.8]{Hu}.) 
The group $\Aut(F, s)$ is identified with 
the subset of $\Hom(E, E)$, consisting of 
commutative diagrams, 
\begin{align}\label{mor:para2}
\xymatrix{
N_{H/X}[-1]^{} \ar[r]^{} \ar[d]_{\mathrm{id}} & F 
 \ar[d]^{g} \\
N_{H/X}[-1]^{} \ar[r]^{} & F,
}
\end{align}
where $g$ is an isomorphism. Hence by (\ref{HomEE}), 
$g$ must be an identity. 
\end{proof}

As another corollary, we can 
explicitly describe the HN 
filtrations of objects of 
the form $(N_{H/X}[-1] \to F)$.

\begin{cor}\label{cor:HN}
For an object $E=(N_{H/X}^{}[-1] \to F) \in \aA(\mu, d)$, 
we have the following. 

(i) If $\alpha<\mu$,
then the HN filtration of $E$ is
either $0=E_0 \subset E_1=E$ or 
 the exact sequence, 
\begin{align*}
0 \to (0 \to F) \to E \to (N_{H/X}[-1] \to 0) \to 0. 
\end{align*} 

(ii) If $\alpha=\mu$, then the HN filtration 
of $E$ is $0=E_0 \subset E_1=E$. 

(iii) If $\alpha>\mu$, then the HN
filtration of $E$ is 
either $0=E_0 \subset E_1 =E$ or 
an exact sequence of the form, 
\begin{align*}
0 \to (N_{H/X}[-1] \stackrel{s'}{\to} F') \to E \to 
(0 \to F'') \to 0, 
\end{align*}
where $(N_{H/X}[-1] \stackrel{s'}{\to}F')$
is determined by a parabolic stable pair $(F', s')$.
\end{cor}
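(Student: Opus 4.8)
The plan is to read off all three cases from the uniqueness of the Harder-Narasimhan filtration in Lemma~\ref{lem:NH}, combined with the characterizations of $\widehat{\mu}_{\alpha}$-semistability for rank-one objects obtained in Proposition~\ref{prop:stab}. The structural point is that on an object $E = (N_{H/X}[-1] \to F)$ the slope $\widehat{\mu}_{\alpha}$ takes only the two values $\alpha$ and $\mu$, and that any subobject $(N_{H/X}^{\oplus r}[-1] \to F') \subset E$ has $r \in \{0, 1\}$; this rigidly pins down the possible HN data, so that once the semistable case is excluded there is essentially a unique candidate filtration to verify.

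First I would dispose of (ii) and (i). In case (ii), where $\alpha = \mu$, Proposition~\ref{prop:stab}(ii) says $E$ is already $\widehat{\mu}_{\alpha}$-semistable, so its HN filtration is $0 = E_0 \subset E_1 = E$. In case (i), where $\alpha < \mu$, Proposition~\ref{prop:stab}(i) says $E$ is $\widehat{\mu}_{\alpha}$-semistable precisely when $F = 0$, in which case the filtration is again trivial. If $F \neq 0$, I would exhibit the sequence $0 \to (0 \to F) \to E \to (N_{H/X}[-1] \to 0) \to 0$ and check it is the HN filtration: the subobject $(0 \to F)$ lies in $\aA(\mu, d)$ because $F$ is $\omega$-semistable, and it is $\widehat{\mu}_{\alpha}$-semistable since every subquotient of it in $\aA(\mu,d)$ again has vanishing $N_{H/X}$-part, hence slope $\mu$; the quotient $(N_{H/X}[-1] \to 0)$ is $\widehat{\mu}_{\alpha}$-stable as it admits no nontrivial sub or quotient in $\aA(\mu,d)$ (this is recorded in the proof of Proposition~\ref{prop:stab}(i)); and $\widehat{\mu}_{\alpha}(0 \to F) = \mu > \alpha = \widehat{\mu}_{\alpha}(N_{H/X}[-1] \to 0)$. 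The uniqueness clause of Lemma~\ref{lem:NH} then identifies this as the HN filtration.

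For case (iii), where $\alpha > \mu$, Proposition~\ref{prop:stab}(iii) identifies $\widehat{\mu}_{\alpha}$-semistability of $E$ with $(F, s)$ being a parabolic stable pair, in which case the HN filtration is $0 = E_0 \subset E_1 = E$. Otherwise, let $0 = E_0 \subset E_1 \subset E_2 = E$ be the HN filtration with $0 \neq E_1 \neq E$. Both graded pieces are $\widehat{\mu}_{\alpha}$-semistable with strictly decreasing slope, and since $\alpha > \mu$ this forces $\widehat{\mu}_{\alpha}(E_1) = \alpha$ and $\widehat{\mu}_{\alpha}(E/E_1) = \mu$. Writing $E_1 = (N_{H/X}^{\oplus r_1}[-1] \to F')$, the first equality gives $r_1 \geq 1$, hence $r_1 = 1$ because the ambient object has rank one; the second then forces $E/E_1 = (0 \to F'')$ with $F' \hookrightarrow F$ a subsheaf and $F'' = F/F'$. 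Finally, $\widehat{\mu}_{\alpha}$-semistability of $E_1$ is, again by Proposition~\ref{prop:stab}(iii), exactly the assertion that $(F', s')$ is a parabolic stable pair, giving the claimed description; the numerical condition needed to form this extension in $\aA(\mu,d)$ holds since $\omega \cdot [F] \leq d$.

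The only place requiring genuine care --- more bookkeeping than obstacle --- is case (iii): one must argue that the maximal destabilizing subobject has rank precisely one and that the quotient lies entirely in the sheaf slot, which is where the dichotomy $r \in \{0, 1\}$ is combined with the weak seesaw property and the two-element value set $\{\alpha, \mu\}$. Everything else is a direct appeal to Proposition~\ref{prop:stab} and the uniqueness part of Lemma~\ref{lem:NH}.
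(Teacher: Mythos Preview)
Your proof is correct and follows the same approach as the paper, which simply states that the result follows from Lemma~\ref{lem:NH} and Proposition~\ref{prop:stab}; you have merely spelled out the details of how those two results combine in each case.
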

\begin{proof}
The result is obvious from Lemma~\ref{lem:NH} and 
Proposition~\ref{prop:stab}. 
\end{proof}

\subsection{Stack of objects in $\aA(\mu, d)$}\label{subsec:stackA}
In this subsection, we study stack of objects 
in the category $\aA(\mu, d)$. For the introduction to
stacks, see~\cite{GL}.

Let $\mathrsfs{A}(\mu, d)$ be the 2-functor, 
\begin{align*}
\mathrsfs{A}(\mu, d) \colon 
\mathrm{Sch}/\mathbb{C} \to \mathrm{groupoid},
\end{align*}
sending an $\mathbb{C}$-scheme $T$ to the groupoid of 
pairs, 
\begin{align}\label{pair:fami}
\nN[-1] \to \fF, 
\end{align}
where 
$\nN$ and $\fF$
are $T$-flat coherent sheaves on $X\times T$, 
such that for any closed point $t\in T$, 
there is a commutative diagram, 
\begin{align*}
\xymatrix{
\nN_t[-1] \ar[r] \ar[d]^{\psi} & \fF_t \ar[d]^{h} \\
N_{H/X}^{\oplus r}[-1] \ar[r] & F.
}
\end{align*}
Here the bottom pair is an object in $\aA(\mu, d)$, 
and $\psi$, $h$ are isomorphisms of sheaves on $X$. 

The morphisms in the groupoid $\mathrsfs{A}(\mu, d)(T)$ 
are given by  
the commutative diagrams, 
\begin{align}\label{mor:para2}
\xymatrix{
\nN_1[-1] \ar[r]^{} \ar[d]_{\phi} & \fF_1 
 \ar[d]^{g} \\
\nN_2[-1]  \ar[r]^{} & \fF_2,
}
\end{align}
where $\phi$ and $g$ are isomorphisms of sheaves on $X\times T$. 

The stack $\mathrsfs{A}(\mu, d)$ can be easily 
 shown to be a global quotient stack 
of some scheme locally of finite type
over $\mathbb{C}$, 
in particular, 
it is an Artin stack of finite type 
over $\mathbb{C}$. 
In order to see this, 
 we decompose $\mathrsfs{A}(\mu, d)$ into 
components, 
\begin{align*}
\mathrsfs{A}(\mu, d)=\coprod_{\begin{subarray}{c}
(r, \beta, n) \in \Gamma(\mu), \\
r\ge 0, \ 0\le \omega \cdot \beta \le d
\end{subarray}}
\mathrsfs{A}_{r, \beta, n}, 
\end{align*}
where $\mathrsfs{A}_{r, \beta, n}$ is the stack 
of pairs $N_{H/X}^{\oplus r}[-1] \to F$
with $[F]=\beta$ and $\chi(F)=n$,
and $\Gamma(\mu)$ is the abelian group 
defined by 
\begin{align}\label{Gmu}
\Gamma(\mu)\cneq \{ (r, \beta, n) \in \mathbb{Z} \oplus 
H_2(X, \mathbb{Z}) \oplus \mathbb{Z} : n=\mu(\omega \cdot \beta)\}. 
\end{align}
Below we use the notation in Subsection~\ref{subsec:const}. 
For fixed $\beta$ and $n$ as above, we 
take $m\gg 0$
and the Quot scheme $Q$
as in (\ref{Quot}).
Let $U\subset Q$ be the open subscheme as in (\ref{UQ}).
We have the following coherent sheaf on $U$, 
\begin{align}\label{RUr}
R_{U}^{(r)} \cneq 
\eE xt_{\pi_U}^1(N_{H/X}^{\oplus r} \boxtimes \oO_U, \mathbb{F}_{U}), 
\end{align} 
where $\eE xt_{\pi_U}^i(\ast, \ast)$ is the $i$-th
derived functor 
with respect to the functor $\pi_{U\ast}\hH om(\ast, \ast)$. 
By Lemma~\ref{lem:Ext}, 
the sheaf $R_{U}^{(r)}$ is a vector bundle and 
we have the canonical isomorphism, 
\begin{align*}
R_U^{(r)} \cong 
\pi_{U\ast}(\mathbb{F}_{U} \otimes \oO_{H\times U}^{\oplus r}). 
\end{align*}
In particular $R_{U}^{(1)}$ coincides with 
$R_U$ given in (\ref{RU}). 
We also denote the total space of the 
vector bundle (\ref{RUr}) as $R_U^{(r)}$. 

The space $R_{U}^{(r)}$ parameterizes
data, 
\begin{align*}
V \otimes \oO_X(-m) \twoheadrightarrow F
\leftarrow N_{H/X}^{\oplus r}[-1], 
\end{align*}
where the left arrow represents 
a point in $U$. 
The groups $\GL(V)$, 
$\GL(r, \mathbb{C})$ act on 
$V\otimes \oO_X(-m)$, $N_{H/X}^{\oplus r}[-1]$
respectively. Hence 
we obtain the action of 
$\GL(V) \times \GL(r, \mathbb{C})$ on 
$R_{U}^{(r)}$. 
The stack $\mathrsfs{A}_{r, \beta, n}$ is constructed to be 
the quotient stack, 
\begin{align}\label{Astack}
\mathrsfs{A}_{r, \beta, n} =
\left[R_U^{(r)}/\left(\GL(V) \times \GL(r, \mathbb{C}) \right)   \right]. 
\end{align}

\subsection{Hall algebra}
In this subsection, we recall
the Hall type algebra of 
the category $\aA(r, d)$
based on the work~\cite{Joy2}, 
using the Artin stack $\mathrsfs{A}(\mu, d)$. 

The $\mathbb{Q}$-vector space
$H(\mu, d)$ is spanned by 
the isomorphism classes of pairs, 
\begin{align}\label{iso:pair}
(\mathrsfs{X}, \rho), 
\end{align}
where 
$\mathrsfs{X}$ is an Artin stack of finite 
type over $\mathbb{C}$
with affine stabilizers, and $\rho$
is a 1-morphism, 
\begin{align*}
\rho \colon \mathrsfs{X} \to \mathrsfs{A}(\mu, d). 
\end{align*}
Two pairs $(\mathrsfs{X}_i, \rho_i)$
for $i=1, 2$ are isomorphic if 
there is an 1-isomorphism $f \colon \mathrsfs{X}_1 
\to \mathrsfs{X}_2$ which 2-commutes
with $\rho_i$. 
The $\mathbb{Q}$-vector space $H(\mu, d)$
is decomposed as 
\begin{align*}
H(\mu, d)=\bigoplus_{\begin{subarray}{c}
(r, \beta, n) \in \Gamma(\mu) \\
r\ge 0, \ 0\le \omega \cdot \beta \le d
\end{subarray}}
H_{r, \beta, n},
\end{align*}
where $H_{r, \beta, n}$ is the subvector 
space of $H(\mu, d)$, spanned by 
(\ref{iso:pair}) 
such that $\rho$ factors through 
the substack $\mathrsfs{A}_{r, \beta, n} \subset \mathrsfs{A}(\mu, d)$. 

Let $\mathrsfs{E} x(\mu, d)$ be the 
stack of exact sequences in $\aA(\mu, d)$, 
\begin{align}\label{E123}
0 \to E_1 \to E_2 \to E_3 \to 0. 
\end{align}
As in the previous subsection, it is 
not difficult to show that 
$\mathrsfs{E} x(\mu, d)$ is 
an Artin stack locally of finite type 
over $\mathbb{C}$, and we omit the detail. 
We have the 1-morphisms, 
\begin{align*}
p_i \colon \mathrsfs{E} x(\mu, d)
\to \mathrsfs{A}(\mu, d), 
\end{align*}
sending an exact sequence (\ref{E123})
to the object $E_i$. 

For two elements
 $\alpha_i=(\mathrsfs{X}_i, \rho_i) \in H(\mu, d)$
with $i=1, 2$, its $\ast$-product
$\alpha_1 \ast \alpha_2$ is defined in
the following way. 
We have the diagram, 
\begin{align*}
\xymatrix{
\mathrsfs{Y} \ar[r]^{\rho} \ar[d] & 
\mathrsfs{X} \ar[r]^{p_2} \ar[d]^{(p_1, p_3)} & \mathrsfs{A}(\mu, d), \\
\mathrsfs{X}_1 \times \mathrsfs{X}_2 \ar[r]^{(\rho_1, \rho_2)} & 
\mathrsfs{A}(\mu, d)^{\times 2}. &
}
\end{align*}
Here the left diagram is a Cartesian square. 
The product $\alpha_1 \ast \alpha_2$ is given by 
\begin{align*}
\alpha_1 \ast \alpha_2 =
[(\mathrsfs{Y}, p_2 \circ \rho)] \in H(\mu, d). 
\end{align*}
We have the following proposition. 
\begin{prop}
The $\ast$-product on $H(\mu, d)$
is an associative product on $H(\mu, d)$, 
with unit given by 
$[\Spec \mathbb{C} \to \mathrsfs{A}(\mu, d)]$, 
corresponding to $0\in \aA(\mu, d)$. 
\end{prop}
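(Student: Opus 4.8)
The plan is to follow Joyce's construction of Ringel--Hall algebras of exact categories~\cite{Joy2}, adapted to the truncated additive category $\aA(\mu, d)$. First I would check that the $\ast$-product really takes values in $H(\mu, d)$, i.e. that $\mathrsfs{Y}$ is again an Artin stack of finite type over $\mathbb{C}$ with affine stabilisers. Since $\mathrsfs{Y}$ is the base change of the $1$-morphism $(p_1, p_3) \colon \mathrsfs{E} x(\mu, d) \to \mathrsfs{A}(\mu, d)^{\times 2}$ along $(\rho_1, \rho_2) \colon \mathrsfs{X}_1 \times \mathrsfs{X}_2 \to \mathrsfs{A}(\mu, d)^{\times 2}$ with each $\mathrsfs{X}_i$ of finite type, it suffices to see that $(p_1, p_3)$ is of finite type; but the fibre of $(p_1, p_3)$ over a point representing $(E_1, E_3)$ is the quotient stack of the space of extensions, which is the affine space $\Ext^1(E_3, E_1)$, by the group $\Hom(E_3, E_1)$, both finite dimensional by Lemma~\ref{lem:exseq} and the discussion preceding it. Affine stabilisers are clearly preserved under these operations. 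Throughout one uses that any subobject or quotient occurring in an exact sequence in $\aA(\mu, d)$ is again an $\omega$-semistable sheaf of slope $\mu$ and of degree $\le d$, as recorded after Lemma~\ref{lem:Ext}, so that the construction never leaves $\aA(\mu, d)$.

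Next I would verify the unit axiom. If $\rho_1$ is the point $[\Spec \mathbb{C} \to \mathrsfs{A}(\mu, d)]$ corresponding to $0 \in \aA(\mu, d)$, then an exact sequence $0 \to 0 \to E_2 \to E_3 \to 0$ in $\aA(\mu, d)$ is the same datum as an isomorphism $E_2 \simto E_3$; hence the fibre product $\mathrsfs{Y}$ is canonically $1$-isomorphic to $\mathrsfs{X}_2$ over $\mathrsfs{A}(\mu, d)$, so $[\Spec \mathbb{C} \to \mathrsfs{A}(\mu, d)] \ast \alpha_2 = \alpha_2$. The symmetric argument, using exact sequences $0 \to E_1 \to E_2 \to 0 \to 0$, gives $\alpha_1 \ast [\Spec \mathbb{C} \to \mathrsfs{A}(\mu, d)] = \alpha_1$.

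For associativity I would introduce the Artin stack $\mathrsfs{E} x^{(2)}(\mu, d)$ of $2$-step filtrations $0 \subset A_1 \subset A_2 \subset A_3$ in $\aA(\mu, d)$, together with the $1$-morphisms $q_1, q_2, q_3, q \colon \mathrsfs{E} x^{(2)}(\mu, d) \to \mathrsfs{A}(\mu, d)$ recording $A_1$, $A_2/A_1$, $A_3/A_2$ and $A_3$ respectively; exactly as for $\mathrsfs{E} x(\mu, d)$ this is locally of finite type with affine stabilisers, using again that all the $A_i$ and their subquotients lie in $\aA(\mu, d)$ since they are bounded by $A_3$. The key structural point is that remembering the subobject $A_1 \subset A_2$, respectively forgetting it, exhibits $\mathrsfs{E} x^{(2)}(\mu, d)$ as the $2$-fibre product $\mathrsfs{E} x(\mu, d) \times_{p_1, \mathrsfs{A}(\mu, d), p_2} \mathrsfs{E} x(\mu, d)$, because a $2$-step filtration is exactly an extension $0 \to A_2 \to A_3 \to A_3/A_2 \to 0$ equipped with a subobject $A_1 \subset A_2$, and this description is stable in families. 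Granting this identification and the analogous one for the other decomposition, I would unwind the definition of the $\ast$-product and apply the pasting law for $2$-Cartesian squares to identify both $(\alpha_1 \ast \alpha_2) \ast \alpha_3$ and $\alpha_1 \ast (\alpha_2 \ast \alpha_3)$ with $[(\mathrsfs{Z}, q \circ \sigma)]$, where $\mathrsfs{Z}$ is the $2$-fibre product of $\mathrsfs{E} x^{(2)}(\mu, d)$ with $\mathrsfs{X}_1 \times \mathrsfs{X}_2 \times \mathrsfs{X}_3$ over $\mathrsfs{A}(\mu, d)^{\times 3}$ via $(q_1, q_2, q_3)$ and $(\rho_1, \rho_2, \rho_3)$, and $\sigma$ is the projection to $\mathrsfs{E} x^{(2)}(\mu, d)$. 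Since this last description is manifestly independent of the bracketing, associativity follows.

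I expect the main obstacle to be precisely the identification $\mathrsfs{E} x^{(2)}(\mu, d) \simeq \mathrsfs{E} x(\mu, d) \times_{\mathrsfs{A}(\mu, d)} \mathrsfs{E} x(\mu, d)$ together with the careful bookkeeping of the $2$-morphisms, because $\aA(\mu, d)$ is not an abelian category: extensions and direct sums only exist when condition~(\ref{restrict}) holds, so one must ensure that the truncation $\omega \cdot \beta \le d$ never obstructs forming the filtrations, extensions or families appearing above. This is guaranteed by the observation after Lemma~\ref{lem:Ext} --- every subquotient of an object of $\aA(\mu, d)$ is again an object of $\aA(\mu, d)$ of degree $\le d$, and the exact sequence~(\ref{Ex:Ext}) is available whenever the total degree is $\le d$ --- after which the argument of~\cite{Joy2} applies essentially verbatim.
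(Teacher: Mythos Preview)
Your proposal is correct and follows essentially the same approach as the paper: both invoke Joyce's argument from~\cite[Theorem~5.2]{Joy2} and isolate the non-abelian nature of $\aA(\mu,d)$ as the only obstacle. The paper dispatches this obstacle in one line by observing that for $\alpha_i \in H_{r_i,\beta_i,n_i}$ with $\omega\cdot(\beta_1+\beta_2+\beta_3)>d$ both triple products vanish, while you phrase the complementary half --- that when the total degree is $\le d$ every subquotient appearing in a $2$-step filtration again lies in $\aA(\mu,d)$, so Joyce's identification $\mathrsfs{E}x^{(2)}\simeq \mathrsfs{E}x\times_{\mathrsfs{A}}\mathrsfs{E}x$ goes through; these two observations are two sides of the same coin and together constitute the full argument.
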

\begin{proof}
Although the category $\aA(\mu, d)$ is not an 
abelian category, the same proof 
as in~\cite[Theorem~5.2]{Joy2} works. 
The only thing to notice is that, for
 $\alpha_i \in H_{r_i, \beta_i, n_i}$
with $i=1, 2, 3$ and 
\begin{align*}
\omega \cdot (\beta_1 + \beta_2 +\beta_3)>d,
\end{align*}
we have the equalities, 
\begin{align*}
\alpha_1 \ast (\alpha_2 \ast \alpha_3)=
(\alpha_1 \ast \alpha_2) \ast \alpha_3 =0.
\end{align*}
\end{proof}
\subsection{Elements $\delta_{r, \beta, n}^{\alpha}$, 
$\epsilon_{r, \beta, n}^{\alpha}$}
For $\alpha \in \mathbb{Q}$, we have the substack, 
\begin{align*}
\mathrsfs{M}^{\alpha}_{r, \beta, n} \subset \mathrsfs{A}_{r, \beta, n}, 
\end{align*}
parameterizing $\widehat{\mu}_{\alpha}$-semistable objects
$(N_{H/X}^{\oplus r}[-1] \to F) \in \aA(\mu, d)$ with $[F]=\beta$, 
$\chi(F)=n$. 
 
It is not difficult to check that the stack 
$\mathrsfs{M}^{\alpha}_{r, \beta, n}$
is an open finite type substack of $\mathrsfs{A}_{r, \beta, n}$, 
hence it is an Artin stack of finite type over $\mathbb{C}$. 
We will only need this fact for $r=0, 1$, the cases which 
follow from Proposition~\ref{prop:stab} and Theorem~\ref{thm:moduli}
immediately. 
For instance, suppose that $r=1$ and $\alpha>\mu$. 
Then in the notation of the previous section, 
we have 
\begin{align}\label{Mstack}
\mathrsfs{M}^{\alpha}_{1, \beta, n}
=\left[ R_{U}^{\mathrm{s}} / \left( \mathbb{C}^{\ast} \times 
\mathrm{GL}(V) \right)    \right] 
\end{align}
by (\ref{Astack}) and Proposition~\ref{prop:stab} (iii).
By Proposition~\ref{prop:GIT}, the RHS of (\ref{Mstack})
is an open substack of $\mathrsfs{A}_{r, \beta, n}$, and 
we have 
\begin{align}\label{one:isom}
\mathrsfs{M}_{1, \beta, n}^{\alpha} 
\cong \left[M_{n}^{\rm{par}}(X, \beta)/\mathbb{C}^{\ast} \right].
\end{align}
Here $\mathbb{C}^{\ast}$ acts on $M_n^{\rm{par}}(X, \beta)$ trivially. 

\begin{rmk}\label{donot:depend}
(i) For any $\alpha$, the $\mathbb{C}$-valued point of 
the stack 
$\mathrsfs{M}_{r, 0, 0}^{\alpha}$ consists of 
$(N_{H/X}^{\oplus r}[-1] \to 0)$, hence 
\begin{align}\label{Mr00}
\mathrsfs{M}_{r, 0, 0} \cong \left[ \Spec \mathbb{C}/\GL(r, \mathbb{C})
   \right]. 
\end{align}
In particular the stack (\ref{Mr00}) does not depend on $\alpha$. 

(ii) For any $\alpha$, the $\mathbb{C}$-valued points 
of the stack 
$\mathrsfs{M}_{0, \beta, n}^{\alpha}$ consist of the objects
of the form $(0 \to F)$. 
In particular the stack $\mathrsfs{M}_{0, \beta, n}^{\alpha}$
does not depend on $\alpha$, and isomorphic to 
the moduli stack of one dimensional $\mu_{\omega}$-semistable sheaves 
$F$ on $X$ with $[F]=\beta$, $\chi(F)=n$. 
\end{rmk}

Following~\cite[Definition~3.1.8]{Joy4}, we 
define $\delta_{r, \beta, n}^{\alpha}$
and $\epsilon_{r, \beta, n}^{\alpha} \in H_{r, \beta, n}$ to be 
\begin{align}\label{delta}
\delta_{r, \beta, n}^{\alpha}
&=[\mathrsfs{M}_{r, \beta, n}^{\alpha} \hookrightarrow 
\mathrsfs{A}_{r, \beta, n}], \\
\epsilon_{r, \beta, n}^{\alpha}
&= \sum_{\begin{subarray}{c}
l\ge 1, 
(r_i, \beta_i, n_i) \in \Gamma(\mu), 1\le i\le l, 
\\
\label{epsilon}
(r_1, \beta_1, n_1)+ \cdots +(r_l, \beta_l, n_l)
=(r, \beta, n), \\
\widehat{\mu}_{\alpha}(r_i, \beta_i, n_i)=\widehat{\mu}_{\alpha}(r, \beta, n).
\end{subarray}}
\frac{(-1)^{l-1}}{l} \delta_{r_1, \beta_1, n_1}^{\alpha} \ast 
\cdots \ast \delta_{r_l, \beta_l, n_l}^{\alpha}. 
\end{align}
Here $\widehat{\mu}_{\alpha}(r, \beta, n)$ is
defined to be 
$\alpha$ if $r\neq 0$ and $\mu$
if $r=0$. 
In other words,
the element $\epsilon_{r, \beta, n}^{\alpha}$
is given by  
\begin{align*}
\sum_{\widehat{\mu}_{\alpha}(r, \beta, n)=\mu'}
\epsilon_{r, \beta, n}^{\alpha}
= \log \left( 1 + \sum_{\widehat{\mu}_{\alpha}(r, \beta, n)=\mu'}
\delta_{r, \beta, n}^{\alpha}  \right),
\end{align*}
for $\mu' \in \{\alpha, \mu\}$. Or equivalently, 
\begin{align}\label{Or:equiv}
 1 + \sum_{\widehat{\mu}_{\alpha}(r, \beta, n)=\mu'}
\delta_{r, \beta, n}^{\alpha} = \exp 
\left(\sum_{\widehat{\mu}_{\alpha}(r, \beta, n)=\mu'}
\epsilon_{r, \beta, n}^{\alpha}   \right).
\end{align}
Note that, by Remark~\ref{donot:depend}, the elements
$\delta_{r, 0, 0}^{\alpha}$, $\delta_{0, \beta, n}^{\alpha}$, 
$\epsilon_{0, \beta, n}^{\alpha}$ do not 
depend on $\alpha$. Below we omit $\alpha$ in the notation
for these elements.
Also for the convention, we set $\delta_{0, 0, 0}=1$. 

Using the results of Corollary~\ref{cor:HN}, 
the following lemma obviously follows. 
\begin{prop}\label{prop:ident}
Take $\alpha>\mu$
and $(0, \beta, n) \in \Gamma(\mu)$. 
Then we have the following identity in $H(\mu, d)$, 
\begin{align*}
\delta_{1, \beta, n}^{\mu} &=
\delta_{0, \beta, n} \ast \delta_{1, 0, 0} \\
&= \sum_{\begin{subarray}{c}
(\beta_1, n_1) +(\beta_2, n_2)=(\beta, n), \\
n_i=\mu(\omega \cdot \beta_i)
\end{subarray}}
\delta_{1, \beta_1, n_1}^{\alpha} \ast \delta_{0, \beta_2, n_2}. 
\end{align*}
\end{prop}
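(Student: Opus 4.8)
The plan is to prove both equalities by identifying the Hall algebra elements in question with classes of (locally closed) Harder--Narasimhan strata of the stack $\mathrsfs{A}_{1,\beta,n}$. The key preliminary observation is that, since $(0,\beta,n)\in\Gamma(\mu)$, Proposition~\ref{prop:stab}(ii) shows that \emph{every} object $(N_{H/X}[-1]\to F)\in\aA(\mu,d)$ of type $(1,\beta,n)$ is $\widehat{\mu}_{\mu}$-semistable, so $\mathrsfs{M}^{\mu}_{1,\beta,n}=\mathrsfs{A}_{1,\beta,n}$ and hence $\delta^{\mu}_{1,\beta,n}=[\id_{\mathrsfs{A}_{1,\beta,n}}]$. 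Similarly, for any $(0,\beta',n')\in\Gamma(\mu)$ every object $(0\to F')$ is $\widehat{\mu}_{\alpha}$-semistable, its subobjects being exactly the $(0\to G)$ with $G\subset F'$, for which $\widehat{\mu}_{\alpha}(0\to G)=\mu_{\omega}(G)\le\mu\le\mu_{\omega}(F'/G)=\widehat{\mu}_{\alpha}(0\to F'/G)$ by $\omega$-semistability of $F'$; thus $\mathrsfs{M}_{0,\beta',n'}=\mathrsfs{A}_{0,\beta',n'}$ and $\delta_{0,\beta',n'}=[\id_{\mathrsfs{A}_{0,\beta',n'}}]$. So both sides of the claimed identity will be shown to equal $[\id_{\mathrsfs{A}_{1,\beta,n}}]$.

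For the first equality, $\delta_{0,\beta,n}\ast\delta_{1,0,0}$ is by definition the class of the stack $\mathrsfs{Y}$ of short exact sequences $0\to(0\to F)\to E\to(N_{H/X}[-1]\to 0)\to 0$ in $\aA(\mu,d)$, mapped to $\mathrsfs{A}(\mu,d)$ via the middle term $E$. Applying Lemma~\ref{lem:exseq} with $E_2=(N_{H/X}[-1]\to 0)$ and $E_1=(0\to F)$ gives $\Hom((N_{H/X}[-1]\to 0),(0\to F))=0$ and, using Lemma~\ref{lem:Ext}, $\Ext^1((N_{H/X}[-1]\to 0),(0\to F))\cong\Ext^1_X(N_{H/X},F)\cong F\otimes\oO_H$. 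Hence such an extension is precisely an object $E=(N_{H/X}[-1]\xrightarrow{\,s\,}F)$ with $s$ its extension class, the sub- and quotient objects being canonically determined by $E$; the vanishing of the $\Hom$ shows the sequence carries no automorphism beyond those of $E$. Therefore $\mathrsfs{Y}\to\mathrsfs{A}_{1,\beta,n}$ is an equivalence and $\delta_{0,\beta,n}\ast\delta_{1,0,0}=[\id_{\mathrsfs{A}_{1,\beta,n}}]=\delta^{\mu}_{1,\beta,n}$.

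For the second equality I would use the HN stratification for $\widehat{\mu}_{\alpha}$-stability with $\alpha>\mu$. By Corollary~\ref{cor:HN}(iii) (together with Lemma~\ref{lem:NH}), every object $E$ of type $(1,\beta,n)$ has a unique HN filtration, which is either $0\subset E$ (so $E$ itself lies in $\mathrsfs{M}^{\alpha}_{1,\beta,n}$, the case $(\beta_2,n_2)=(0,0)$) or a two-step filtration $0\to E_1\to E\to(0\to F'')\to 0$ with $E_1\in\mathrsfs{M}^{\alpha}_{1,\beta_1,n_1}$ ($\widehat{\mu}_{\alpha}$-semistable of slope $\alpha$, either $(N_{H/X}[-1]\to 0)$ or coming from a parabolic stable pair by Proposition~\ref{prop:stab}(iii)) and $F''$ an $\omega$-semistable sheaf of slope $\mu$, being a quotient of the slope-$\mu$ $\omega$-semistable sheaf $F$ by the slope-$\mu$ subsheaf $F_1$. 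Recording the numerical type $((1,\beta_1,n_1),(0,\beta_2,n_2))$ of the graded pieces decomposes $\mathrsfs{A}_{1,\beta,n}$ into disjoint locally closed substacks indexed exactly by decompositions $(\beta_1,n_1)+(\beta_2,n_2)=(\beta,n)$ with $n_i=\mu(\omega\cdot\beta_i)$, covering all of $\mathrsfs{A}_{1,\beta,n}$. By uniqueness and functoriality of the HN filtration, the stratum of a given type is equivalent over $\mathrsfs{A}(\mu,d)$ to the stack of short exact sequences $0\to E_1\to E_2\to E_3\to 0$ with $E_1\in\mathrsfs{M}^{\alpha}_{1,\beta_1,n_1}$ and $E_3\in\mathrsfs{M}_{0,\beta_2,n_2}=\mathrsfs{A}_{0,\beta_2,n_2}$ (using Remark~\ref{donot:depend} for the $\alpha$-independence of the $r=0$ factor), which by definition represents $\delta^{\alpha}_{1,\beta_1,n_1}\ast\delta_{0,\beta_2,n_2}$. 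Summing over all such decompositions gives $\delta^{\mu}_{1,\beta,n}=[\id_{\mathrsfs{A}_{1,\beta,n}}]=\sum\delta^{\alpha}_{1,\beta_1,n_1}\ast\delta_{0,\beta_2,n_2}$, as required. (Alternatively, this second equality is an instance of Joyce's general transformation law between the elements $\delta$ for the stabilities $\widehat{\mu}_{\mu}$ and $\widehat{\mu}_{\alpha}$, but the HN argument is self-contained given Corollary~\ref{cor:HN}.)

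Given the machinery already in place, the computations above are essentially bookkeeping; the only point needing genuine care -- and thus the \emph{main obstacle} -- is the passage from the set-theoretic statements ("every object of the given type has a unique HN filtration of the prescribed shape") to the stack-level identities in $H(\mu,d)$, i.e. checking that $\delta_{0,\beta,n}\ast\delta_{1,0,0}$ and $\delta^{\alpha}_{1,\beta_1,n_1}\ast\delta_{0,\beta_2,n_2}$ coincide with the corresponding strata \emph{as stacks over $\mathrsfs{A}(\mu,d)$}, with the right automorphism groups rather than merely the same points. This is guaranteed by the $\Hom$-vanishing supplied by Lemma~\ref{lem:exseq} together with the uniqueness and canonicity of HN filtrations, so I do not expect a real difficulty, only some careful verification.
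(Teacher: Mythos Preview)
Your proof is correct and follows essentially the same approach as the paper: the paper's proof simply cites Corollary~\ref{cor:HN} together with Joyce's HN-stratification argument \cite[Theorem~5.11]{Joy4}, and what you have written is precisely the unpacking of that citation. The only stylistic difference is that for the first equality you argue directly via the $\Hom$/$\Ext$ computation of Lemma~\ref{lem:exseq}, whereas the paper's reference to Corollary~\ref{cor:HN} would phrase it as the HN stratification for $\alpha<\mu$ (part~(i)); these are the same content.
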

\begin{proof}
The result obviously follows from Corollary~\ref{cor:HN}
and the arguments given in~\cite[Theorem~5.11]{Joy4}. 
\end{proof}

\subsection{Lie algebra homomorphism}
Following~\cite{JS}, we can 
construct a Lie algebra homomorphism from 
a certain Lie subalgebra of 
$H(\mu, d)$ to a Lie algebra defined 
by the Euler pairing on $\aA(\mu, d)$. 
Applying the Lie algebra homomorphism to 
the formula in Proposition~\ref{prop:ident}, 
we obtain a formula relating invariants 
counting parabolic stable pairs to 
generalized DT invariants introduced in~\cite{JS}, \cite{K-S}. 

First by~\cite[Definition~5.13]{Joy2}, 
there is a Lie subalgebra, 
\begin{align*}
H^{\mathrm{Lie}}(\mu, d) \subset H(\mu, d), 
\end{align*}
consisting of elements 
$[\rho \colon \mathrsfs{X} \to \mathrsfs{A}(\mu, d)]$, 
supported on `virtual indecomposable objects'. 
The definition of the virtual indecomposable objects 
is very complicated and we omit its detail. The only 
property we need is that, 
\begin{align*}
\epsilon_{r, \beta, n}^{\alpha} \in H^{\mathrm{Lie}}(\mu, d),
\end{align*} 
for any element $\epsilon_{r, \beta, n}^{\alpha}$
constructed in the previous subsection. 

Next we define the Lie algebra $C(\mu, d)$. 
Let $\Gamma(\mu)$ be the abelian group 
given by (\ref{Gmu}). 
We have the map, 
\begin{align*}
\cl \colon \aA(\mu, d) \to \Gamma(\mu), 
\end{align*}
defined by 
\begin{align*}
\cl(N_{H/X}^{\oplus r}[-1] \to F) 
=(r, [F], \chi(F)). 
\end{align*}
Let $\chi \colon \Gamma(\mu) \times \Gamma(\mu) \to \mathbb{Z}$
be a bilinear anti-symmetric pairing given by 
\begin{align*}
\chi((r_1, \beta_1, n_1), (r_2, \beta_2, n_2))=
r_2(\beta_1 \cdot H) - r_1(\beta_2 \cdot H). 
\end{align*}
We have the following lemma. 
\begin{lem}
For $E_i \in \aA(\mu, d)$ with $i=1, 2$ satisfying 
\begin{align*}
\cl(E_i)=(r_i, \beta_i, n_i), \quad 
\omega \cdot (\beta_1 +\beta_2) \le d,
\end{align*} we have 
\begin{align}\notag
\chi(E_1, E_2) &=\dim \Hom(E_1, E_2)- \dim \Ext^1(E_1, E_2) \\
\label{chi1}
& \qquad -\dim \Hom(E_2, E_1) +\dim \Ext^1(E_2, E_1). 
\end{align}
\end{lem}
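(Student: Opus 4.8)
The plan is to compute the Euler pairing $\chi(E_1, E_2)$ by expressing everything through the explicit $\Hom$ and $\Ext^1$ groups for objects in $\aA(\mu, d)$ that were assembled in Lemma~\ref{lem:exseq}, and then to observe that the right-hand side of (\ref{chi1}) telescopes to the purely numerical quantity $r_2(\beta_1 \cdot H) - r_1(\beta_2 \cdot H)$. First I would note that since $\omega \cdot (\beta_1 + \beta_2) \le d$, the transversality hypothesis applies to $F_1$, $F_2$ and their extensions, so the exact sequence (\ref{exact:HME}) of Lemma~\ref{lem:exseq} is available for the pair $(E_1, E_2)$ in both orders. From that six-term exact sequence, taking the alternating sum of dimensions gives
\begin{align*}
\dim \Hom(E_2, E_1) - \dim \Ext^1(E_2, E_1)
&= \dim M(r_2, r_1) + \dim \Hom(F_2, F_1) \\
& \quad - \dim \Hom(N_{H/X}^{\oplus r_2}[-1], F_1) - \dim \Ext^1(F_2, F_1),
\end{align*}
and similarly with the roles of $1$ and $2$ swapped. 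Here I would use Lemma~\ref{lem:Ext}, which identifies $\Hom(N_{H/X}^{\oplus r_2}[-1], F_1) = \Ext^1(N_{H/X}^{\oplus r_2}, F_1) \cong (F_1 \otimes \oO_H)^{\oplus r_2}$, a zero-dimensional sheaf of length $r_2(\beta_1 \cdot H)$, since $\beta_1 \cdot H$ is exactly the length of $F_1 \otimes \oO_H$ under the transversality assumption; the other $\Ext$-groups of $N_{H/X}$ into $F_1$ vanish.

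Next I would subtract the two telescoped identities. The terms $\dim M(r_2, r_1) = r_1 r_2 = \dim M(r_1, r_2)$ cancel. The sheaf-theoretic contribution $\dim \Hom(F_2, F_1) - \dim \Ext^1(F_2, F_1) - \dim \Hom(F_1, F_2) + \dim \Ext^1(F_1, F_2)$ is the Euler pairing $\chi(F_1, F_2) - \chi(F_2, F_1)$ on one-dimensional sheaves on the Calabi-Yau $3$-fold $X$; by Serre duality $\Ext^i_X(F_2, F_1) \cong \Ext^{3-i}_X(F_1, F_2)^{\vee}$, so $\chi_X(F_1, F_2) = -\chi_X(F_2, F_1)$ — but more to the point, for sheaves supported in dimension $\le 1$ the Euler pairing $\chi_X(F_1, F_2)$ depends only on the Chern characters and is in fact $0$, so this whole block contributes nothing. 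What survives is precisely $-(-r_2(\beta_1 \cdot H)) + (-r_1(\beta_2 \cdot H)) = r_2(\beta_1 \cdot H) - r_1(\beta_2 \cdot H)$, which is the asserted value of $\chi((r_1,\beta_1,n_1),(r_2,\beta_2,n_2))$.

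The main obstacle I anticipate is bookkeeping the signs and making sure the vanishing $\chi_X(F_1, F_2) = 0$ for one-dimensional sheaves is invoked correctly (it follows from Riemann-Roch/Hirzebruch-Riemann-Roch together with $K_X = 0$: the Euler characteristic of $\dR\HOM(F_1,F_2)$ integrated against $\td(X)$ vanishes because both sheaves have Chern character concentrated in degrees $\ge 2$ and the relevant intersection numbers drop out). A secondary subtlety is checking that the finiteness of all the $\Ext^1$ groups in $\aA(\mu, d)$ — needed for the alternating-sum manipulation to be legitimate — is guaranteed by the restriction $\omega \cdot (\beta_1 + \beta_2) \le d$, which is exactly the hypothesis in the statement; this is precisely why that inequality is imposed. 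Once these points are in place the computation is a direct substitution, with no deformation-theoretic input beyond Lemma~\ref{lem:exseq} and Lemma~\ref{lem:Ext}.
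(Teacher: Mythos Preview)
Your proposal is correct and follows essentially the same route as the paper: apply the alternating-sum of dimensions to the exact sequence of Lemma~\ref{lem:exseq} in both orders, use Lemma~\ref{lem:Ext} to identify $\dim\Hom(N_{H/X}^{\oplus r_2}[-1],F_1)=r_2(\beta_1\cdot H)$, and then invoke Serre duality together with Riemann--Roch on the Calabi--Yau $3$-fold to kill the sheaf-theoretic block (the paper records this vanishing as the identity $\dim\Hom(F_1,F_2)-\dim\Ext^1(F_1,F_2)+\dim\Ext^1(F_2,F_1)-\dim\Hom(F_2,F_1)=0$). Your own caveat about sign bookkeeping is apt---the four-term sheaf block you wrote is, after Serre duality, equal to $\pm\chi_X(F_1,F_2)$ rather than $\chi(F_1,F_2)-\chi(F_2,F_1)$---but since this quantity vanishes either way the computation goes through unchanged.
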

\begin{proof}
By Lemma~\ref{lem:exseq}, we have 
\begin{align}\notag
&\dim \Hom(E_2, E_1)-\dim \Ext^1(E_2, E_1)  \\
\label{chi2}
&=\dim \Hom(F_2, F_1) -\dim \Ext^1(F_2, F_1)
+r_1 r_2 - r_1 (\beta_2 \cdot H). 
\end{align}
On the other hand, by the Serre
duality and Riemann-Roch 
theorem on $X$, we have 
\begin{align} \notag
&\dim \Hom(F_1, F_2)- \dim \Ext^1(F_1, F_2) \\
\label{chi3}
& \qquad + \dim \Ext^1(F_2, F_1) -\dim \Hom(F_2, F_1)=0. 
\end{align}
The formula (\ref{chi1}) 
follows from (\ref{chi2}) and (\ref{chi3}). 
\end{proof}
Let us consider the following 
$\mathbb{Q}$-vector space, 
\begin{align}\label{C(mu)}
C(\mu) \cneq \bigoplus_{(r, \beta, n) \in \Gamma(\mu)}
\mathbb{Q} c_{(r, \beta, n)}. 
\end{align}
There is the Lie algebra 
structure on the
 $\mathbb{Q}$-vector space $C(\mu)$ 
by 
\begin{align*}
[c_{v_1}, c_{v_2}] =(-1)^{\chi(v_1, v_2)} \chi(v_1, v_2)
c_{v_1 +v_2},
\end{align*}
for $v_1, v_2 \in \Gamma(\mu)$. Let $I_d \subset C(\mu)$
be the ideal of $C(\mu)$, 
generated by $c_{(r, \beta, n)}$ with $\omega \cdot \beta >d$. 
The Lie algebra $C(\mu, d)$ is defined to be the 
quotient, 
\begin{align*}
C(\mu, d) \cneq C(\mu)/I_{d}. 
\end{align*}
Finally we recall that the Behrend function~\cite{Beh}
on $\mathbb{C}$-schemes are naturally generalized to 
those on Artin stacks over $\mathbb{C}$. (cf.~\cite[Theorem~5.12]{JS}.)
Let 
\begin{align*}
\nu_{\mathrsfs{A}} \colon \mathrsfs{A}(\mu, d) \to \mathbb{Z},
\end{align*}
be the Behrend function on $\mathrsfs{A}(\mu, d)$. 
The argument of Joyce-Song~\cite{JS} 
is applied in our situation, and the
following result holds. 
\begin{thm}\label{thm:Lie}
There is a Lie algebra homomorphism, 
\begin{align*}
\Upsilon \colon H^{\mathrm{Lie}}(\mu, d) 
\to C(\mu, d), 
\end{align*}
such that for an element
\begin{align*}
u=
\left[ 
[M/\mathbb{C}^{\ast}] \stackrel{\rho}{\to} \mathrsfs{A}_{r, \beta, n}
\right] \in H^{\mathrm{Lie}}(\mu, d), 
\end{align*}
where $M$ is a $\mathbb{C}$-scheme 
with a trivial $\mathbb{C}^{\ast}$-action, 
we have 
\begin{align}\label{form:Up}
\Upsilon(u)=\left( \int_{M} \rho^{\ast} \nu_{\mathrsfs{A}} d\chi   \right)
c_{(r, \beta, n)}. 
\end{align}
\end{thm}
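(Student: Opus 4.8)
The plan is to follow the construction of Joyce--Song~\cite{JS} (which rests on Joyce's machinery of motivic Hall algebras and stack functions~\cite{Joy2,Joy4}), adapting it to the non-abelian category $\aA(\mu,d)$. First I would \emph{define} the map $\Upsilon$. On an element of the form $u=[[M/\mathbb{C}^{\ast}]\xrightarrow{\rho}\mathrsfs{A}_{r,\beta,n}]$ with $M$ a $\mathbb{C}$-scheme carrying the trivial $\mathbb{C}^{\ast}$-action, one sets $\Upsilon(u)=\bigl(\int_M\rho^{\ast}\nu_{\mathrsfs{A}}\,d\chi\bigr)\,c_{(r,\beta,n)}$, which is formula~(\ref{form:Up}). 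To extend this to all of $H^{\mathrm{Lie}}(\mu,d)$, one uses that every stack function $[\mathrsfs{X}\xrightarrow{\rho}\mathrsfs{A}_{r,\beta,n}]$ with $\mathrsfs{X}$ of finite type with affine stabilizers can be rewritten, in the relevant quotient, as a $\mathbb{Q}$-linear combination of terms $[[M_i/\mathbb{G}_m^{k_i}]\to\mathrsfs{A}_{r,\beta,n}]$; the ``no pole'' result of Joyce (cf.~\cite[Section~5]{JS}) guarantees that, after projecting away the higher-rank tori, the resulting coefficient of $c_{(r,\beta,n)}$ is a well-defined rational number independent of the presentation, equal to a Behrend-weighted Euler characteristic. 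Well-definedness and $\mathbb{Q}$-linearity of $\Upsilon$ are then immediate, since the weighted Euler characteristic is invariant under $1$-isomorphisms.

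The substantive point is that $\Upsilon$ respects brackets, $\Upsilon([u_1,u_2])=[\Upsilon(u_1),\Upsilon(u_2)]$. As in~\cite[Section~5]{JS} this reduces to two Behrend function identities for $\mathrsfs{A}(\mu,d)$: (i) multiplicativity under direct sums, $\nu_{\mathrsfs{A}}(E_1\oplus E_2)=(-1)^{\chi(\cl E_1,\cl E_2)}\nu_{\mathrsfs{A}}(E_1)\,\nu_{\mathrsfs{A}}(E_2)$, and (ii) an identity over the stack $\mathrsfs{E}x(\mu,d)$ of short exact sequences comparing the weighted Euler characteristic of the fibres of the extension map with the values of $\nu_{\mathrsfs{A}}$ on the sub- and quotient objects. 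Granting these, the bracket relation is a formal computation in the Hall algebra $H(\mu,d)$, using the definition of $\ast$ as a fibre product over $\mathrsfs{A}(\mu,d)^{\times 2}$, the structure of $H^{\mathrm{Lie}}(\mu,d)$, and the definition of the bracket on $C(\mu,d)$; note that the sign $(-1)^{\chi}$ and the factor $\chi$ in $[c_{v_1},c_{v_2}]$ are forced precisely by identity (i) together with the computation of the Euler form in~(\ref{chi1}), which on $\Gamma(\mu)$ with $\omega\cdot(\beta_1+\beta_2)\le d$ equals $r_2(\beta_1\cdot H)-r_1(\beta_2\cdot H)$.

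To obtain identities (i) and (ii) I would use the forgetful functor sending a pair $(N_{H/X}^{\oplus r}[-1]\xrightarrow{s}F)$ to the coherent sheaf $E\in\Coh(X)$ fitting in $0\to F\to E\to N_{H/X}^{\oplus r}\to 0$ with extension class $s$. Since $X$ is a Calabi--Yau $3$-fold, the moduli stack of objects of $\Coh(X)$ is Zariski-locally the critical locus of a regular function on a smooth scheme, and the Chern--Simons-type local models of~\cite{JS} (and~\cite{K-S}) descend to local models for $\mathrsfs{A}(\mu,d)$ once one adds the rigidifying datum $s$; the Behrend function identities are then deduced as in~\cite[Section~5]{JS} from the behaviour of Milnor fibres under Thom--Sebastiani and related formulas. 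The main obstacle, and the one place where genuine care is required, is that $\aA(\mu,d)$ is \emph{not} abelian: the sum $E_1\oplus E_2$ and the extensions~(\ref{par:ext}) exist only when $\omega\cdot(\beta_1+\beta_2)\le d$, so $\mathrsfs{E}x(\mu,d)$ parametrises only sequences of total degree $\le d$, and the identification $\Hom(N_{H/X}^{\oplus r}[-1],F)\cong F\otimes\oO_H$ of Lemma~\ref{lem:Ext} relies on the transversality forced by that bound. One must therefore check that every identity invoked in the Jacobi/associativity bookkeeping involves only classes of total degree $\le d$; this is automatic, because the target $C(\mu,d)=C(\mu)/I_d$ kills exactly the classes of degree $>d$, so all terms that would require the missing structure vanish on both sides. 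With this truncation understood, the argument of~\cite{JS} goes through and produces the desired $\Upsilon$.
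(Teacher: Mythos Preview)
Your overall architecture matches the paper's: invoke the Joyce--Song construction of the integration map, note that the non-abelianness of $\aA(\mu,d)$ is harmless because all brackets with $\omega\cdot(\beta_1+\beta_2)>d$ vanish on both sides, and reduce everything to the Behrend function identities, which in turn rest on a local critical-locus description of $\mathrsfs{A}(\mu,d)$.

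The one substantive divergence is how you obtain that critical-locus structure. You propose the forgetful map $(N_{H/X}^{\oplus r}[-1]\xrightarrow{s}F)\mapsto E$, where $E$ is the extension of $N_{H/X}^{\oplus r}$ by $F$, and then transport local models from the moduli of coherent sheaves. This step is not clearly justified: the assignment $(s,F)\mapsto E$ is neither an open embedding nor a smooth fibration of stacks (automorphism groups do not match, and different pair structures can sit on the same $E$), so there is no evident mechanism by which the Chern--Simons local model for $E$ ``descends'' after adding the datum $s$. Indeed the paper remarks elsewhere that the $E$-viewpoint is obstructed already at the level of obstruction theories, since there is no natural trace-type map $\dR\Hom(E,E)\to\dR\Hom(N_{H/X},N_{H/X})$.

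The paper instead uses the other forgetful map, $(N_{H/X}^{\oplus r}[-1]\to F)\mapsto F$, to the moduli stack $\mathrsfs{M}_{\beta,n}$ of one-dimensional $\omega$-semistable sheaves. By the description $\mathrsfs{A}_{r,\beta,n}=[R_U^{(r)}/(\GL(V)\times\GL(r))]$ and $\mathrsfs{M}_{\beta,n}=[U/\GL(V)]$, this map is a Zariski-locally trivial fibration with smooth fibre $[(\mathbb{C}^{\beta\cdot H})^{\times r}/\GL(r)]$. Since $\mathrsfs{M}_{\beta,n}$ is analytically locally a critical locus by~\cite[Theorem~5.3]{JS}, and tensoring a critical chart with a smooth factor is still a critical chart, the required local description of $\mathrsfs{A}_{r,\beta,n}$ follows immediately. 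Replacing your $E$-map with this $F$-map closes the gap with no further changes to your outline.
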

\begin{proof}
Although $\mathrsfs{A}(\mu, d)$ is not an 
abelian category, the same proof of~\cite[Theorem~5.12]{JS} works. 
We only have to notice the following two things. 

First for $v_i=(r_i, \beta_i, n_i) \in \Gamma(\mu)$
with $i=1, 2$ and $\omega \cdot (\beta_1 +\beta_2) >d$, we have 
\begin{align*}
[u_1, u_2]=0, \quad [c_{v_1}, c_{v_2}]=0, 
\end{align*}
for $u_i \in H_{r_i, \beta_i, n_i}$. 
Therefore we only need to consider the Lie brackets
in the case of $\omega \cdot (\beta_1 +\beta_2) \le d$. 
In that case, we can discuss as if $\aA(\mu, d)$
were an abelian category, and use the argument in~\cite[Theorem~5.12]{JS}. 

The next thing is to show the version of~\cite[Theorem~5.3]{JS}
in our situation. Namely we need to check that 
the moduli stack $\mathrsfs{A}(\mu, d)$
is analytically locally written as a critical 
locus of some holomorphic function, up
to some group action. 

It is enough to check this for the substack 
$\mathrsfs{A}_{r, \beta, n} \subset \mathrsfs{A}(\mu, d)$.
 Let $\mathrsfs{M}_{\beta, n}$ be 
the moduli stack of one dimensional 
semistable sheaves $F$ on $X$ with $[F]=\beta$, $\chi(F)=n$. 
We have the forgetting 1-morphism, 
\begin{align*}
{\bf f} \colon 
\mathrsfs{A}_{r, \beta, n} \to \mathrsfs{M}_{\beta, n},
\end{align*}
sending $(N_{H/X}^{\oplus r}[-1] \to F)$ to $F$. 
On the other hand, 
in the notation of Subsection~\ref{subsec:const}, 
Subsection~\ref{subsec:stackA}, 
the stack $\mathrsfs{M}_{\beta, n}$
is written as the quotient stack, 
\begin{align*}
\mathrsfs{M}_{\beta, n} \cong [U/\GL(V)].
\end{align*}
Combined with the description (\ref{Astack})
of $\mathrsfs{A}_{r, \beta, n}$, 
the 1-morphism ${\bf f}$ is 
induced from the natural 
projection $R_{U}^{(r)} \to U$, 
\begin{align*}
{\bf f} \colon 
\left[R_U^{(r)}/\left(\GL(V) \times \GL(r, \mathbb{C}) \right)   \right]
\to [U/\GL(V)]. 
\end{align*}
The morphism $R_U^{(r)} \to U$
is a Zariski-locally 
trivial fibration with fiber $(\mathbb{C}^{\beta \cdot H})^{\times r}$. 
Thus the morphism 
${\bf f}$ is also a Zariski locally trivial 
fibration with fiber the 
quotient stack 
\begin{align*}
[(\mathbb{C}^{\beta \cdot H})^{\times r}/\GL(r, \mathbb{C})].
\end{align*} 
By~\cite[Theorem~5.3]{JS}, the stack 
$\mathrsfs{M}_{\beta, n}$ is analytically 
locally written as a critical locus of some holomorphic
function up to some group action.
Hence the same 
property holds for the stack $\mathrsfs{A}_{r, \beta, n}$
by the above argument. 
\end{proof}

\subsection{Product expansion formula}
Let $\Upsilon$ be the Lie algebra homomorphism 
discussed in Theorem~\ref{thm:Lie}.
If $\alpha \in \mathbb{Q}$ satisfies 
$\alpha>\mu$, then we have 
$\delta^{\alpha}_{1, \beta, n}=\epsilon^{\alpha}_{1, \beta, n}$
and
\begin{align}\label{Up:D}
\Upsilon(\epsilon^{\alpha}_{1, \beta, n})
=-\mathrm{DT}_{n, \beta}^{\rm{par}} c_{(1, \beta, n)}, 
\end{align} 
by Definition~\ref{defi:parainv},
the isomorphism (\ref{one:isom}) and the formula (\ref{form:Up}). 
Here we need to change the sign 
since the Behrend function on a scheme $M$
and that on $[M/\mathbb{C}^{\ast}]$ differ 
by a sign. 

By applying 
$\Upsilon$ to the element 
$\epsilon_{0, \beta, n} \in H^{\rm{Lie}}(\mu, d)$,  
we obtain the invariant $N_{n, \beta} \in \mathbb{Q}$. 
\begin{defi}\label{defi:Nn}
For $(0, \beta, n) \in \Gamma(\mu)$ with
$\omega \cdot \beta \le d$, 
the invariant $N_{n, \beta} \in \mathbb{Q}$ 
is defined by   
\begin{align}\label{via:N}
\Upsilon(\epsilon_{0, \beta, n})=-N_{n, \beta} c_{(0, \beta, n)}. 
\end{align}
\end{defi}
The invariant $N_{n, \beta} \in \mathbb{Q}$ is 
nothing but a generalized DT invariant introduced in~\cite{JS}, \cite{K-S}, 
counting $\omega$-semistable one dimensional 
sheaves $F$ satisfying $[F]=\beta$, $\chi(F)=n$. 
In fact, by Remark~\ref{donot:depend} (ii), 
the stack defining $\delta_{0, \beta, n}$ 
is the moduli stack of the above $\omega$-semistable 
sheaves, and it is obvious that the invariant
$N_{n, \beta}$ defined via (\ref{via:N}) 
coincides with the invariant 
given in~\cite{JS}. 
Also see~\cite[Section~4]{Tsurvey} for the explanation of 
the construction of $N_{n, \beta} \in \mathbb{Q}$. 
\begin{rmk}
The restriction $\omega \cdot \beta \le d$ is 
not necessary in defining $N_{n, \beta}$
in~\cite{JS}. The restriction $\omega \cdot \beta \le d$
is put in Definition~\ref{defi:Nn} since 
$\epsilon_{0, \beta, n}=0$ if $\omega \cdot \beta>0$ 
so $N_{n, \beta}$ cannot be defined via (\ref{via:N}) 
without the above restriction. 
\end{rmk}
\begin{rmk}\label{rmk:depend}
As explained in~\cite[Theorem~6.16]{JS}, the invariant 
$N_{n, \beta}$ does not depend on a choice of $\omega$. 
Of course it also does not depend on $H$, as 
we do not use $H$ to define $\omega$-semistable sheaves.
\end{rmk}
\begin{rmk}\label{rmk:coprime}
Suppose that $n$ and $\beta$ are coprime. 
Then there is a $\mathbb{Q}$-ample divisor $\omega$
such that the moduli space of $\omega$-stable 
one dimensional sheaves $F$ with $[F]=\beta$, 
$\chi(F)=n$ is a projective scheme. 
(i.e. there is no strictly $\omega$-semistable 
sheaves.)
If $M_n(X, \beta)$ is such a moduli space, then $N_{n, \beta}$
is given by 
\begin{align*}
N_{n, \beta}=\int_{M_n(X, \beta)} \nu_{M} d\chi, 
\end{align*}
where $\nu_M$ is the Behrend function on $M_n(X, \beta)$. 
\end{rmk}
Recall that we defined the series 
$\mathrm{DT}^{\rm{par}}(\mu, d)$
as an element in $\Lambda_{\le d}$ in Subsection~\ref{subsec:Count}. 
Applying the Lie algebra homomorphism $\Upsilon$ in 
Theorem~\ref{thm:Lie}, we obtain a formula 
relating $\mathrm{DT}^{\rm{par}}_{n, \beta}$
and $N_{n, \beta}$. 
The following result follows from the 
same arguments as in~\cite[Theorem~5.24]{JS}, \cite[Theorem~4.7]{Tolim2}, 
\cite[Theorem~5.8]{Tcurve1}. 
For the reader's convenience, we provide the argument. 
\begin{thm}\label{thm:DTpar}
We have the following formula in $\Lambda_{\le d}$, 
\begin{align}\label{form:thm2}
\mathrm{DT}^{\rm{par}}(\mu, d)
=\prod_{\begin{subarray}{c}
\beta>0, \\
 n/\omega \cdot \beta=\mu
\end{subarray}}
\exp\left( (-1)^{\beta \cdot H -1} 
N_{n, \beta}q^n t^{\beta}  \right)^{\beta \cdot H}. 
\end{align}
Here $\beta>0$ means that $\beta$ is 
a homology class of an effective one cycle on $X$.
\end{thm}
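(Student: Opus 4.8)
The plan is to lift the identity of Proposition~\ref{prop:ident} to an identity of generating series in the Hall algebra $H(\mu,d)$, recognize it as a conjugation, and push it forward by the Lie algebra homomorphism $\Upsilon$ of Theorem~\ref{thm:Lie}. First I would fix $\alpha>\mu$ and set
\begin{align*}
E_0\cneq \sum_{\begin{subarray}{c}\beta>0\\ n=\mu(\omega\cdot\beta)\end{subarray}}\epsilon_{0,\beta,n},
\qquad
D_1^{\alpha}\cneq \sum_{\begin{subarray}{c}\beta\ge 0\\ n=\mu(\omega\cdot\beta)\end{subarray}}\delta_{1,\beta,n}^{\alpha},
\end{align*}
both finite sums in $H(\mu,d)$ since classes with $\omega\cdot\beta>d$ vanish. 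By (\ref{Or:equiv}) we have $1+\sum_{\beta>0}\delta_{0,\beta,n}=\exp(E_0)$; and because for $\alpha>\mu$ the only decomposition of $(1,\beta,n)$ into classes all of $\widehat{\mu}_{\alpha}$-slope $\alpha$ is the trivial one, $\delta_{1,\beta,n}^{\alpha}=\epsilon_{1,\beta,n}^{\alpha}$ for every $\beta\ge 0$ (in particular $\delta_{1,0,0}=\epsilon_{1,0,0}$). Summing the two expressions for $\delta_{1,\beta,n}^{\mu}$ in Proposition~\ref{prop:ident} over all slope-$\mu$ classes with $r=1$ then yields
\begin{align*}
\exp(E_0)\ast \delta_{1,0,0}\;=\;\sum_{\beta\ge 0}\delta_{1,\beta,n}^{\mu}\;=\;D_1^{\alpha}\ast\exp(E_0),
\end{align*}
whence $D_1^{\alpha}=\exp(E_0)\ast\delta_{1,0,0}\ast\exp(-E_0)=\exp(\mathrm{ad}_{E_0})(\delta_{1,0,0})$, the last step being the formal identity $e^{X}Ye^{-X}=e^{\mathrm{ad}_X}Y$, legitimate here because $E_0$ is nilpotent modulo the classes with $\omega\cdot\beta>d$.

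Next I would apply $\Upsilon$. Each iterated bracket $(\mathrm{ad}_{E_0})^{k}(\delta_{1,0,0})$ is built from the elements $\epsilon_{0,\beta,n}$ and $\epsilon_{1,0,0}$, hence lies in $H^{\mathrm{Lie}}(\mu,d)$, and so does $D_1^{\alpha}$; since $\Upsilon$ is a Lie algebra homomorphism, $\Upsilon(D_1^{\alpha})=\exp(\mathrm{ad}_{\Upsilon(E_0)})(\Upsilon(\delta_{1,0,0}))$ in $C(\mu,d)$. By Definition~\ref{defi:Nn}, $\Upsilon(E_0)=-\sum_{\beta>0}N_{n,\beta}\,c_{(0,\beta,n)}$; by (\ref{Up:D}) extended to $(\beta,n)=(0,0)$ — where $M_{0}^{\rm par}(X,0)=\Spec\mathbb{C}$ and $\DT_{0,0}^{\rm par}=1$ — we have $\Upsilon(\delta_{1,0,0})=-c_{(1,0,0)}$ and $\Upsilon(\epsilon_{1,\beta,n}^{\alpha})=-\DT_{n,\beta}^{\rm par}c_{(1,\beta,n)}$ for $\beta>0$. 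The classes $c_{(0,\ast,\ast)}$ span an abelian subalgebra of $C(\mu,d)$ (the pairing $\chi$ vanishes on them) and $\chi((0,\beta',n'),(1,\beta,n))=\beta'\cdot H$, so under the identification $c_{(1,\beta,n)}\leftrightarrow q^{n}t^{\beta}$ the operator $\mathrm{ad}_{\Upsilon(E_0)}$ acts on the $r=1$ part as multiplication by $-g$, where
\begin{align*}
g\cneq \sum_{\begin{subarray}{c}\beta>0\\ n=\mu(\omega\cdot\beta)\end{subarray}}(-1)^{\beta\cdot H}(\beta\cdot H)\,N_{n,\beta}\,q^{n}t^{\beta}\in\Lambda_{\le d}.
\end{align*}
Hence on the one hand $\Upsilon(D_1^{\alpha})=\exp(\mathrm{ad}_{\Upsilon(E_0)})(-c_{(1,0,0)})=-\exp(-g)$, and on the other hand, reading off coefficients, $\Upsilon(D_1^{\alpha})=-c_{(1,0,0)}-\sum_{\beta>0}\DT_{n,\beta}^{\rm par}c_{(1,\beta,n)}=-\DT^{\rm par}(\mu,d)$. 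Comparing the two gives $\DT^{\rm par}(\mu,d)=\exp(-g)$, which, since $(-1)^{\beta\cdot H}=-(-1)^{\beta\cdot H-1}$, is exactly the right-hand side of (\ref{form:thm2}).

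The bookkeeping above is routine; the point that requires genuine care — and the part I expect to be the main obstacle — is justifying that forming $\exp(\pm E_0)$, the conjugation identity, and the term-by-term application of $\Upsilon$ all make sense even though $\aA(\mu,d)$ is not an abelian category. This rests on the finiteness of $H(\mu,d)$ modulo $\{\omega\cdot\beta>d\}$, on the associativity of $\ast$, and on Theorem~\ref{thm:Lie} (which supplies $\Upsilon$ as a Lie homomorphism defined on all the $\epsilon$'s that occur). A secondary point to pin down carefully is the sign $\Upsilon(\delta_{1,0,0})=-c_{(1,0,0)}$, which comes from the $B\mathbb{C}^{\ast}$ stabilizer of the object $N_{H/X}[-1]$. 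Modulo these, everything is a direct computation in $C(\mu,d)$, and the argument runs exactly as in \cite[Theorem~5.24]{JS}, \cite[Theorem~4.7]{Tolim2} and \cite[Theorem~5.8]{Tcurve1}.
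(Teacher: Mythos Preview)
Your proof is correct and follows essentially the same route as the paper: sum Proposition~\ref{prop:ident} to obtain a conjugation identity $D_1^{\alpha}=\exp(E_0)\ast\delta_{1,0,0}\ast\exp(-E_0)$, rewrite it via Baker--Campbell--Hausdorff as $\exp(\mathrm{ad}_{E_0})(\delta_{1,0,0})$, and push forward by $\Upsilon$ using (\ref{Up:D}) and (\ref{via:N}). The only cosmetic difference is that you package the iterated brackets as ``multiplication by $-g$'' on the $r=1$ part, whereas the paper expands the sum $\sum_{k\ge 0}\frac{1}{k!}(\mathrm{Ad}_{\mathfrak{E}})^{k}(\delta_{1,0,0})$ term by term; the bookkeeping, the signs (including $\Upsilon(\delta_{1,0,0})=-c_{(1,0,0)}$), and the caveats you flag about finiteness in $\Lambda_{\le d}$ are all exactly as in the paper's proof.
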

\begin{proof}
Let us take $\alpha \in \mathbb{Q}$ satisfying 
$\alpha>\mu$. 
We set $\delta^{\alpha} \in H^{\rm{Lie}}(\mu, d)$ to be 
\begin{align*}
\delta^{\alpha} \cneq
\sum_{n=\mu(\omega \cdot \beta)}\delta_{1, \beta, n}^{\alpha}
=\sum_{n=\mu(\omega \cdot \beta)}\epsilon_{1, \beta, n}^{\alpha}. 
\end{align*}
Note that $\delta_{1, \beta, n}^{\alpha}=\epsilon_{1, \beta, n}^{\alpha}=
\delta_{0, \beta, n}=\epsilon_{0, \beta, n}=0$
if $\omega \cdot \beta>d$. 
By Proposition~\ref{prop:ident}, we have the 
identity in $H(\mu, d)$, 
\begin{align}\label{delta:id}
\left( \sum_{n=\mu(\omega \cdot \beta)} \delta_{0, \beta, n} \right) \ast \delta_{1, 0, 0}
=\delta^{\alpha} \ast \left( \sum_{n=\mu(\omega \cdot \beta)} \delta_{0, \beta, n} \right). 
\end{align}
We set $\mathfrak{E} \in H^{\rm{Lie}}(\mu, d)$ to be 
\begin{align*}
\mathfrak{E}=\sum_{n=\mu(\omega \cdot \beta)}
\epsilon_{0, \beta, n}. 
\end{align*}
Then by (\ref{Or:equiv}) and (\ref{delta:id}), we have 
\begin{align}\notag
\delta^{\alpha} &= \exp(\mathfrak{E}) \ast \delta_{1, 0, 0}
 \ast \exp(\mathfrak{E})^{-1} \\
\label{Ad}
&=\sum_{k \ge 0} 
\frac{1}{k!} (\mathrm{Ad}_{\mathfrak{E}})^{k}(\delta_{1, 0, 0}). 
\end{align} 
Here we have used the Baker-Campbell-Hausdorff 
formula in (\ref{Ad}). 
Applying $\Upsilon$ to (\ref{Ad}),
using (\ref{Up:D}), (\ref{via:N})
and setting $\mathrm{DT}_{0, 0}^{\rm{par}}=1$,   
we have the following 
identities in $C(\mu, d)$, 
\begin{align}\notag
&\sum_{n=\mu(\omega \cdot \beta)}\mathrm{DT}_{n, \beta}^{\rm{par}}
c_{(1, \beta, n)} \\
\notag
&=\sum_{k\ge 0} \frac{(-1)^k}{k!}
\sum_{\begin{subarray}{c}
\beta_1, \cdots, \beta_k \in H_2(X, \mathbb{Z}), \\
n_1, \cdots, n_k \in \mathbb{Z}, \\
n_i=\mu(\omega \cdot \beta_i)
\end{subarray}}
\prod_{i=1}^{k} N_{n_i, \beta_i}
\cdot 
\mathrm{Ad}_{c_{(0, \beta_1, n_1)}} \circ
 \cdots \circ
\mathrm{Ad}_{c_{(0, \beta_k, n_k)}}(c_{(1, 0, 0)}) \\
\label{ident1}
&= \sum_{k\ge 0} \frac{1}{k!}
\sum_{\begin{subarray}{c}
\beta_1, \cdots, \beta_k \in H_2(X, \mathbb{Z}), \\
n_1, \cdots, n_k \in \mathbb{Z}, \\
n_i=\mu(\omega \cdot \beta_i)
\end{subarray}}
\prod_{i=1}^{k} (-1)^{\beta_i \cdot H -1}(\beta_i \cdot H) N_{n_i, \beta_i}
\cdot 
c_{(1, \sum_{i=1}^k \beta_i, \sum_{i=1}^{k} n_i)}.  
\end{align}
The formula (\ref{form:thm2}) obviously follows from (\ref{ident1}). 
\end{proof}

\section{Multiple cover formula}\label{sec:mult}
In this section, we discuss relationship 
between Theorem~\ref{thm:DTpar} and the 
conjectural multiple cover formula 
of the generalized DT invariants $N_{n, \beta}$. 
\subsection{Conjectural multiple cover formula}\label{subsec:conjectural}
In this subsection, we recall a conjectural 
multiple cover formula of the 
invariants $N_{n, \beta}$. 
The statement of the conjecture is as follows. 
\begin{conj}{\bf\cite[Conjecture~6.20]{JS}, 
\cite[Conjecture~6.3]{Tsurvey}}\label{conj:mult2}
We have the following formula, 
\begin{align}\label{form:mult:N}
N_{n, \beta}=\sum_{k\ge 1, k|(n, \beta)}
\frac{1}{k^2}N_{1, \beta/k}. 
\end{align}
\end{conj}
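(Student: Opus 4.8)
The plan is to deduce Conjecture~\ref{conj:mult2} from a purely geometric statement about the integer-valued parabolic stable pair invariants, and then to prove that statement by a Jacobian localization. First I would use Theorem~\ref{thm:DTpar}, which expresses the generating series $\mathrm{DT}^{\rm par}(\mu,d)$ in $\Lambda_{\le d}$ as the infinite product of $\exp\!\left((-1)^{\beta\cdot H-1}N_{n,\beta}q^n t^\beta\right)^{\beta\cdot H}$. Taking logarithms and comparing coefficients, a formal power series manipulation shows that the multiple cover formula (\ref{form:mult:N}) holds for every $(n,\beta)$ with $\omega\cdot\beta\le d$ and $n/\omega\cdot\beta=\mu$ if and only if this infinite product collapses to the finite product $\prod\bigl(1-(-1)^{\beta\cdot H}q^n t^\beta\bigr)^{(\beta\cdot H)N_{1,\beta}}$; this is Proposition~\ref{prop:mult:para}. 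After this reduction the target is an identity among $\mathbb{Z}$-valued invariants, with all Hall-algebra input already absorbed into Theorem~\ref{thm:DTpar}.

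The next step is to localize the problem on the support curve. For each reduced curve $C\subset X$ there is a local parabolic stable pair theory, and the local version of the product expansion implies the global one (Proposition~\ref{loc:global}), so it suffices to treat a fixed $C$. As explained in Subsection~\ref{subsec:relation}, the local moduli space carries an action of the Jacobian $\Pic^0(C)$: a degree-zero line bundle $L$ sends $(F,s)$ to $(F\otimes L, s\otimes\mathrm{id}_L)$, and since $H$ meets $C$ in a finite scheme the section $s\in F\otimes\oO_H$ transports compatibly. I would then apply Behrend-weighted localization with respect to the torus part of $\Pic^0(C)$ (for instance the full $(\mathbb{C}^\ast)^{\delta}$ when $C$ is nodal), reducing the local invariants to a sum of contributions from the fixed locus, and analyze the residual action of the abelian part. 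The fixed pairs are, informally, those whose sheaf is pulled back along a finite quotient $C\to C'$; summing their contributions over the divisors $k\mid(n,\beta)$ should reproduce exactly the $N_{1,\beta/k}/k^2$ pattern, i.e. the desired product expansion of $\mathrm{DT}^{\rm par}(\mu,d)$.

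The main obstacle — which is why the argument is carried out in the companion paper~\cite{Todmu} rather than here — is the precise control of the fixed loci and their Behrend functions. One must classify the $\Pic^0(C)$-fixed parabolic stable pairs, understand the deformation theory of the fixed locus (ideally enough to reduce its Behrend-weighted Euler characteristic to $N_{1,\beta/k}$), and match the combinatorics of the multiple-cover sum with the finite quotient maps out of $C$. I expect this to be feasible when $C$ has at worst nodal singularities, where $\Pic^0(C)$ is a semiabelian variety and both the torus localization and the relevant obstruction theory are tractable; handling more degenerate curves would require a replacement for this clean picture, and is left open.
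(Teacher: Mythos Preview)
The statement you are addressing is a \emph{conjecture}, and the paper does not prove it; there is no proof in the paper to compare your proposal against. What the paper does is exactly the reduction you describe in your first two paragraphs: Theorem~\ref{thm:DTpar} and Proposition~\ref{prop:mult:para} translate the conjecture into the product expansion identity for $\mathrm{DT}^{\rm par}(\mu,d)$, and Proposition~\ref{loc:global} reduces that to the local version (Conjecture~\ref{conj:multloc}) for each reduced support curve $C$. Your summary of this reduction is accurate and matches the paper's approach precisely.

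The gap is the one you name yourself. The Jacobian localization step is not a proof here but a program, deferred to~\cite{Todmu}, and even there it is only carried out under restrictive hypotheses (at worst nodal $C$). Your informal description of the $\Pic^0(C)$-fixed pairs as ``pulled back along a finite quotient $C\to C'$'' is a heuristic that is not established in this paper and is not obviously the correct picture; Remark~\ref{rmk:final} indicates that the fixed-locus analysis is subtle and that even identifying the contribution of $\mathbb{C}^\ast$-fixed sheaves is nontrivial. So your proposal is a faithful outline of the paper's strategy, but it is not a proof of Conjecture~\ref{conj:mult2}, and you should not present it as one: the conjecture remains open in general.
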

The conjecture is motivated by the 
strong rationality conjecture of 
the generating series of PT invariants~\cite{PT}. 
As we recalled in Subsection~\ref{subsec:relation}, a PT stable 
pair consists of data, 
\begin{align}\label{PT:mult}
(F, s), \quad s \colon \oO_X \to F, 
\end{align}
where $F$ is a pure one dimensional sheaf 
and $s$ is surjective in dimension one. 
For $\beta \in H_2(X, \mathbb{Z})$ and $n\in \mathbb{Z}$, 
The moduli space of PT stable pairs (\ref{PT:mult}) 
with $[F]=\beta$ and $\chi(F)=n$ is denoted by 
$P_n(X, \beta)$. 
The PT invariant is defined by 
\begin{align}\notag
P_{n, \beta} \cneq \int_{P_n(X, \beta)}
\nu_{P} d\chi \in \mathbb{Z}. 
\end{align}
Here $\nu_{P}$ is the Behrend function on 
$P_n(X, \beta)$. 

Let $\mathrm{PT}_{\beta}(X)$ and $\mathrm{PT}(X)$ 
be the generating series, 
\begin{align}\notag
\mathrm{PT}_{\beta}(X) &\cneq 
\sum_{n \in \mathbb{Z}}P_{n, \beta}q^n, \\
\mathrm{PT}(X) & \cneq 
\sum_{\beta \in H_2(X, \mathbb{Z})} \mathrm{PT}_{\beta}(X)t^{\beta}. 
\end{align}
The main conjecture by Pandharipande-Thomas~\cite{PT}
is an equivalence between 
the generating series of 
Gromov-Witten 
invariants and the generating 
series of PT invariants $\mathrm{PT}(X)$
after a suitable variable change. 
In order to make the variable change 
well-defined, the series $\mathrm{PT}_{\beta}(X)$
should satisfy a (weak) rationality property: 
i.e. $\mathrm{PT}_{\beta}(X)$ should be 
the Laurent expansion of a rational function of 
$q$, invariant under $q \leftrightarrow 1/q$. 

On the other hand, if we believe
GW/PT correspondence, 
then the series $\mathrm{PT}(X)$ is 
expected to be written as a 
certain infinite product expansion, 
called \textit{Gopakumar-Vafa form}. 
The conjecture is formulated in the following way. 
(cf.~\cite[Equation~(18)]{Katz2}, \cite[Conjecture~6.2]{Tsurvey}.)
\begin{conj}\label{conj:PTGV}
There are integers 
\begin{align*}
n_{g}^{\beta} \in \mathbb{Z}, \mbox{ for }
g \ge 0, \ \beta \in H_2(X, \mathbb{Z}), 
\end{align*}
such that we have 
\begin{align}\label{GVform}
\PT(X)=\prod_{\beta >0}\prod_{j=1}^{\infty}
(1-(-q)^j t^{\beta})^{j n_{0}^{\beta}}
 \cdot \prod_{g=1}^{\infty}\prod_{k=0}^{2g-2}
(1-(-q)^{g-1-k}t^{\beta})^{(-1)^{k+g}n_{g}^{\beta}
\left(\begin{subarray}{c}
2g-2 \\
k
\end{subarray} \right)}. 
\end{align}
\end{conj}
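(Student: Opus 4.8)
Since (\ref{GVform}) is itself a conjecture, by a ``proof'' I mean a derivation from the companion conjectures above together with established structural results; the plan is to follow the route that stays closest to the wall-crossing machinery of this paper. The first step is to repeat the analysis leading to Theorem~\ref{thm:DTpar}, but in the rank-one Joyce--Song setting of pairs $\oO_X\to F$ rather than pairs $N_{H/X}[-1]\to F$ (cf.~\cite[Section~13]{JS}): for each fixed slope $\mu$ one writes the part of $\PT(X)$ carried by one-dimensional sheaves of slope $\mu$ as the $\Upsilon$-image of a logarithm-type element assembled from the $\epsilon_{0,\beta,n}$, and then sums over all slopes. The output is $\PT(X)$ expressed as an infinite product, over effective $\beta$ and over $n$, of exponentials of $N_{n,\beta}q^n t^\beta$ with explicit combinatorial weights --- the PT analogue of the formula (\ref{form:thm2}) for $\DT^{\rm par}$. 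This step is essentially formal once the rank-one wall-crossing has been set up.

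The second step is to feed in the multiple cover formula of Conjecture~\ref{conj:mult2}: substituting $N_{n,\beta}=\sum_{k\mid(n,\beta)}k^{-2}N_{1,\beta/k}$ and resumming the resulting $q$-series, the weights $k^{-2}$ are precisely what is needed for the $\exp$--$\log$ cancellation that converts the slope-$\mu$ factor into a product of the form $\prod_{j\ge 1}(1-(-q)^j t^\beta)^{j N_{1,\beta}}$. Since $N_{1,\beta}$ is Katz's genus-zero Gopakumar--Vafa invariant $n_0^\beta$, this recovers the genus-zero product --- the first factor in (\ref{GVform}) --- and in particular the (weak) rationality and $q\leftrightarrow 1/q$ symmetry of $\PT_\beta(X)$.

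What then remains is the second product in (\ref{GVform}), with \emph{integers} $n_g^\beta$ for $g\ge 1$, and this is where the real difficulty lies. The counting of one-dimensional semistable sheaves, equivalently the multiple cover formula, controls only the ``diagonal'' slope and hence, at best, the genus-zero invariants; it does not detect the higher-genus structure. To obtain the $g\ge 1$ factors I would invoke the Gromov--Witten/stable-pairs correspondence, descending from the GW/DT correspondence of~\cite{MNOP} together with the DT/PT wall-crossing, which reduces (\ref{GVform}) to the Gopakumar--Vafa finiteness-and-integrality conjecture in Gromov--Witten theory; alternatively one can try to read the $n_g^\beta$ off a perverse (cohomological) refinement of the sheaf counting on the moduli of one-dimensional sheaves $M_n(X,\beta)$ with $n$ coprime to $\beta$. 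Either way, the main obstacle is not the product \emph{shape} or the genus-zero coefficients, which are within reach of the wall-crossing plus the multiple cover formula, but the existence and integrality of the higher-genus Gopakumar--Vafa invariants $n_g^\beta$, which lies outside the scope of the one-dimensional sheaf theory developed here.
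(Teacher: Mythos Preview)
This statement is a conjecture, and the paper offers no proof; what it does (citing \cite{BrH}, \cite{Tsurvey}) is record that Conjecture~\ref{conj:PTGV} is \emph{equivalent} to the multiple cover formula of Conjecture~\ref{conj:mult2}, via the wall-crossing formula~(\ref{form:rat}). Your plan to derive~(\ref{GVform}) from Conjecture~\ref{conj:mult2} is therefore the right shape, but Step~1 has a genuine gap: the wall-crossing for $\PT(X)$ does not give a pure product of exponentials of $N_{n,\beta}q^nt^\beta$. The analogy with Theorem~\ref{thm:DTpar} breaks because $\aA(\mu,d)$ fixes a slope $\mu$, whereas a PT pair $(\oO_X\to F)$ imposes no slope condition on $F$ (which need not even be semistable), so there is no decomposition of $\PT(X)$ into slope-$\mu$ pieces on which one can rerun the Harder--Narasimhan argument. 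The actual formula, proved in \cite{Tolim2}, \cite{BrH} by wall-crossing in the derived category, is~(\ref{form:rat}): your product multiplied by an additional factor $\sum_{n,\beta}L_{n,\beta}q^nt^\beta$.

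This omission also accounts for the misconception in Step~3. You claim the higher-genus $n_g^\beta$ lie outside one-dimensional sheaf theory and must be imported from GW/PT, but the Corollary following~(\ref{form:rat}) asserts the opposite: Conjecture~\ref{conj:mult2} alone is equivalent to the \emph{full} Conjecture~\ref{conj:PTGV}. Your Step~2 correctly collapses the first factor of~(\ref{form:rat}) to the genus-zero product $\prod_{\beta>0}\prod_{j\ge 1}(1-(-q)^jt^\beta)^{jN_{1,\beta}}$ with $n_0^\beta=N_{1,\beta}$; the higher-genus part then comes precisely from the $L_{n,\beta}$ factor you dropped, whose properties $L_{n,\beta}=L_{-n,\beta}$ and $L_{n,\beta}=0$ for $\lvert n\rvert\gg 0$ force each $\sum_n L_{n,\beta}q^n$ to be a Laurent polynomial symmetric under $q\leftrightarrow 1/q$, exactly the shape contributed by the $g\ge 1$ product in~(\ref{GVform}). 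The missing ingredient is the $L_{n,\beta}$-invariants and their structural properties, not Gromov--Witten theory.
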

The above conjecture also implies the 
weak rationality of $\mathrm{PT}_{\beta}(X)$, and 
in fact it is nothing but Pandharipande-Thomas's
strong rationality conjecture~\cite[Conjecture~3.14]{PT}.
 
Now in~\cite{Tolim2}, \cite{BrH}, 
the weak rationality of $\mathrm{PT}_{\beta}(X)$
is solved. The idea is to relate the
invariants $P_{n, \beta}$ with 
 $N_{n, \beta}$
and other invariants $L_{n, \beta}$, 
and use some properties of the latter invariants. 
More precisely we have the following theorem. 
\begin{thm}{\bf\cite{Tolim2} (Euler characteristic version), \cite{BrH}}
There are invariants 
$L_{n, \beta} \in \mathbb{Q}$ satisfying 
\begin{align*}
L_{n, \beta}=L_{-n, \beta}, \quad L_{n, \beta}=0,
\mbox{ for } \lvert n \rvert \gg 0,
\end{align*}
 such that 
the following formula holds:
\begin{align}\label{form:rat}
\mathrm{PT}(X)=
\prod_{n>0, \beta>0}
\exp \left( (-1)^{n-1} N_{n, \beta}q^n t^{\beta}\right)^{n} \left( \sum_{n, \beta}L_{n, \beta}q^n t^{\beta} \right).  
\end{align}
\end{thm}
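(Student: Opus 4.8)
The plan is to run the same motivic Hall algebra wall-crossing machinery that produced Theorem~\ref{thm:DTpar}, but in the rank one situation in which $N_{H/X}[-1]$ is replaced by $\oO_X$, following~\cite{Tolim2}, \cite{BrH}. Fix $\beta$. First I would work inside a suitable abelian subcategory $\bB \subset D^b(X)$ containing the two term complexes $I^{\bullet}=(\oO_X \stackrel{s}{\to} F)$ with $F \in \Coh_{\le 1}(X)$, together with $\oO_X$ itself and the pure one dimensional sheaves; this is the category of pairs underlying~\cite{PT}. On $\bB$ one constructs a one parameter family of weak stability conditions, interpolating between the chamber whose semistable objects of numerical type $(1,\beta,n)$ are exactly the PT stable pairs $(\oO_X \to F)$, and a \emph{limit} chamber. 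In the limit chamber the stable objects are the \emph{limit stable pairs} of~\cite{Tolim2}; by boundedness and a GIT construction parallel to Subsection~\ref{subsec:const}, their moduli spaces $L_n(X,\beta)$ are projective schemes of finite type, and one sets $L_{n,\beta}\cneq\int_{L_n(X,\beta)}\nu_L\,d\chi$ with $\nu_L$ the Behrend function~\cite{Beh}.

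Next I would apply the Joyce--Song, Kontsevich--Soibelman wall-crossing formula~\cite{JS}, \cite{K-S} in the Hall algebra of $\bB$, exactly as in the proof of Theorem~\ref{thm:DTpar}: the generating element encoding PT stable pairs and the one encoding limit stable pairs differ by the wall-crossing transformation associated to crossing the walls between the two chambers, each wall being governed by the element $\epsilon_{0,\beta,n}$ built from the moduli stacks of $\omega$-semistable one dimensional sheaves. Pushing this identity through the Behrend function weighted Lie algebra homomorphism of the type constructed in Theorem~\ref{thm:Lie}, and using the Baker--Campbell--Hausdorff expansion as in (\ref{Ad}), one obtains a factorisation in which each rank one object contributes the Euler pairing $\chi(\oO_X,F)=n$ rather than $\beta\cdot H$; this produces precisely
\begin{align*}
\mathrm{PT}(X)=\prod_{n>0,\ \beta>0}\exp\bigl((-1)^{n-1}N_{n,\beta}q^n t^{\beta}\bigr)^{n}\cdot\Bigl(\sum_{n,\beta}L_{n,\beta}q^n t^{\beta}\Bigr),
\end{align*}
the bracketed factor being by construction the generating series of the limit invariants. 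The difference from Theorem~\ref{thm:DTpar} is that there is no ample divisor cutoff forcing a single wall; one must instead argue that for fixed $\beta$ only finitely many walls separate the PT and limit chambers, which again follows from boundedness of the relevant families of semistable objects.

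It remains to verify the two asserted properties of $L_{n,\beta}$. The vanishing $L_{n,\beta}=0$ for $\lvert n\rvert\gg0$ is immediate from boundedness: the family of limit semistable objects of fixed class $\beta$ is bounded, so their holomorphic Euler characteristics lie in a finite set, whence $L_n(X,\beta)=\emptyset$ for $\lvert n\rvert$ large. For the symmetry $L_{n,\beta}=L_{-n,\beta}$ I would use the contravariant dualising functor $\mathbb{D}(-)\cneq\dR\HOM_X(-,\oO_X)$ (up to a fixed shift). Since $X$ is a Calabi--Yau $3$-fold, $\mathbb{D}$ is an anti-autoequivalence of $D^b(X)$ preserving the limit heart $\bB$ up to the natural reindexing, it sends an object of numerical type $(1,\beta,n)$ to one of type $(1,\beta,-n)$, and it carries limit (semi)stable objects to limit (semi)stable objects. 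Hence $\mathbb{D}$ induces an isomorphism of schemes $L_n(X,\beta)\cong L_{-n}(X,\beta)$ respecting the critical locus structure, and the Behrend function is intrinsic under such identifications; integrating $\nu$ over the two moduli spaces therefore gives $L_{n,\beta}=L_{-n,\beta}$.

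The technical heart, and the main obstacle, is the construction and control of the limit stability condition itself: one must define the limit heart $\bB$ and check it has enough structure to support the Hall algebra, show the limit semistable moduli stacks are of finite type so that $L_{n,\beta}$ is well defined, and prove finiteness of walls so that the telescoping wall-crossing argument terminates and the Lie algebra homomorphism applies term by term. Bridgeland's approach in~\cite{BrH} bypasses the explicit wall analysis by applying the integration map to a single Poisson automorphism identity in the Hall algebra; either route yields the stated product formula.
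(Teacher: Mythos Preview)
The paper does not prove this theorem: it is quoted from the references \cite{Tolim2} and \cite{BrH} and is used only as background input to the discussion of Conjecture~\ref{conj:PTGV} and Conjecture~\ref{conj:mult2}. There is therefore no ``paper's own proof'' to compare against.

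That said, your sketch is a faithful outline of the arguments in those references. The category $\bB$, the one-parameter family of weak stability conditions interpolating between PT and limit stability, the finiteness of walls for fixed $\beta$, the Hall algebra wall-crossing pushed through the Behrend-weighted integration map, and the Baker--Campbell--Hausdorff manipulation are exactly the ingredients of \cite{Tolim2}; Bridgeland's alternative via a single Hall algebra identity is also correctly summarised. The symmetry $L_{n,\beta}=L_{-n,\beta}$ via the derived dual and the vanishing for $\lvert n\rvert\gg 0$ via boundedness are likewise the standard arguments.

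One small inaccuracy: you define $L_{n,\beta}$ as $\int_{L_n(X,\beta)}\nu_L\,d\chi$ over a projective fine moduli \emph{scheme} of limit stable objects. In general limit stability admits strictly semistable objects, so no such scheme exists; the invariants $L_{n,\beta}$ are defined through the Hall algebra element $\epsilon$ and the integration map, exactly as $N_{n,\beta}$ is in Definition~\ref{defi:Nn}. This does not affect the overall strategy, but the claimed ``GIT construction parallel to Subsection~\ref{subsec:const}'' for $L_n(X,\beta)$ is not available in general.
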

The above theorem implies 
the weak rationality of $\mathrm{PT}_{\beta}(X)$. 
(cf.~\cite[Corollary~4.8]{Tolim2}.)
The formula (\ref{form:rat}) 
is weaker than the formula (\ref{GVform}),
but it reduces Conjecture~\ref{conj:PTGV} to Conjecture~\ref{conj:mult2}. 
Namely we have the following corollary.  
\begin{cor}{\bf \cite[Theorem~1.1]{BrH}, \cite[Theorem~6.4]{Tsurvey}}

(i) The series $\mathrm{PT}_{\beta}(X)$ is the Laurent 
expansion of a rational function of $q$, invariant 
under $q \leftrightarrow 1/q$. 

(ii) Conjecture~\ref{conj:PTGV} holds if and only if
 Conjecture~\ref{conj:mult2} holds. In this case, we have 
\begin{align*}
n_{0, \beta}=N_{1, \beta}. 
\end{align*}
\end{cor}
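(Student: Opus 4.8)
The plan is to read both parts off the product formula (\ref{form:rat}), analysing the coefficient of $t^{\beta}$ on each side. Write $\mathrm{PT}(X)=A(X)\cdot L(X)$, where $A(X)=\prod_{n>0,\,\gamma>0}\exp\bigl((-1)^{n-1}N_{n,\gamma}q^{n}t^{\gamma}\bigr)^{n}$ and $L(X)=\sum_{n,\gamma}L_{n,\gamma}q^{n}t^{\gamma}$. For part (i) I would first note that the coefficient of $t^{\beta}$ in $L(X)$ is the Laurent polynomial $\sum_{n}L_{n,\beta}q^{n}$, which has finitely many terms (as $L_{n,\beta}=0$ for $|n|\gg 0$) and is invariant under $q\leftrightarrow 1/q$ (as $L_{n,\beta}=L_{-n,\beta}$). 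So it suffices to control $A(X)$. The factor attached to a class $\gamma$ is $\exp\bigl(t^{\gamma}\sum_{n>0}n(-1)^{n-1}N_{n,\gamma}q^{n}\bigr)$; since $-\otimes\oO_{X}(1)$ is an autoequivalence preserving one dimensional $\omega$-semistability and shifting $\chi$ by $\omega\cdot\gamma$, the sequence $n\mapsto N_{n,\gamma}$ is periodic of period $\omega\cdot\gamma$, hence $\sum_{n>0}n(-1)^{n-1}N_{n,\gamma}q^{n}$ is the Taylor expansion at $q=0$ of an explicit rational function of $q$. As the coefficient of $t^{\beta}$ in $A(X)$ is a finite sum of products of such rational functions over decompositions of $\beta$ into positive classes, $\mathrm{PT}_{\beta}(X)$ is a rational function of $q$. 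For the symmetry I would substitute $q\mapsto 1/q$ in (\ref{form:rat}): the factor $L(X)$ is unchanged by $L_{n,\gamma}=L_{-n,\gamma}$, while in $A(X)$ the reindexing $n\mapsto -n$ together with $N_{n,\gamma}=N_{-n,\gamma}$ --- which holds because the derived dual $\dR\Hom(-,\oO_{X})[2]$ is an anti-autoequivalence carrying $\omega$-semistable one dimensional sheaves with $\chi=n$ to those with $\chi=-n$ --- shows that, as a rational function, $A(X)|_{q\mapsto 1/q}=A(X)$; here one works in $\mathbb{Q}(q)[[t^{\beta}]]$ and uses that the $n>0$ and $n<0$ partial sums of a periodic-coefficient series represent the same rational function.

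For part (ii) I would compare (\ref{form:rat}) with the Gopakumar--Vafa form (\ref{GVform}) coefficient of $t^{\beta}$ at a time, by induction on $\omega\cdot\beta$. Assuming Conjecture~\ref{conj:mult2}, substitute $N_{n,\gamma}=\sum_{k\mid(n,\gamma)}k^{-2}N_{1,\gamma/k}$ into $A(X)$ and reorganise the sum over $n>0$ according to the divisor $k$: using $(-1)^{jk-1}q^{jk}=-(-q)^{jk}$ and $\sum_{j\ge 1}jx^{j}=x(1-x)^{-2}$ with $x=(-q)^{k}$, one finds $\log A(X)=-\sum_{\gamma>0}\bigl(\sum_{k\mid\gamma}k^{-1}N_{1,\gamma/k}(-q)^{k}(1-(-q)^{k})^{-2}\bigr)t^{\gamma}$, which is exactly the logarithm of the genus zero factor $\prod_{\gamma>0}\prod_{j\ge 1}(1-(-q)^{j}t^{\gamma})^{jN_{1,\gamma}}$ of (\ref{GVform}), with $n_{0}^{\gamma}=N_{1,\gamma}$. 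Thus $\mathrm{PT}(X)=\prod_{\gamma>0}\prod_{j\ge 1}(1-(-q)^{j}t^{\gamma})^{jN_{1,\gamma}}\cdot L(X)$, and it remains to put $L(X)$ into the higher genus shape of (\ref{GVform}) with integer coefficients. Conversely, assuming Conjecture~\ref{conj:PTGV}: the coefficient of $t^{\beta}$ in the genus zero GV factor has poles in $q$, whereas those of $L(X)$ and of the genus $\ge 1$ GV factors are Laurent polynomials; matching polar parts against $A(X)$ in (\ref{form:rat}), and subtracting the (already known, by induction) contributions of proper divisor classes of $\beta$, forces $\sum_{n>0}n(-1)^{n-1}N_{n,\beta}q^{n}=N_{1,\beta}\sum_{n>0}n(-1)^{n-1}q^{n}$, which by periodicity of $n\mapsto N_{n,\beta}$ is precisely Conjecture~\ref{conj:mult2} for $\beta$ together with $n_{0,\beta}=N_{1,\beta}$.

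The main obstacle is the remaining half of the ``if'' direction in (ii): after identifying $A(X)$ with the genus zero factor, one must show that the residual series $L(X)=\sum_{n,\beta}L_{n,\beta}q^{n}t^{\beta}$ can be written as $\prod_{\beta>0}\prod_{g\ge 1}\prod_{k=0}^{2g-2}(1-(-q)^{g-1-k}t^{\beta})^{(-1)^{k+g}n_{g}^{\beta}\binom{2g-2}{k}}$ with \emph{integer} $n_{g}^{\beta}$. Solving for the $n_{g}^{\beta}$ recursively in $\beta$ is formally possible over $\mathbb{Q}$, because the Laurent polynomials $\bigl((-q)^{1/2}-(-q)^{-1/2}\bigr)^{2g-2}$, $g\ge 1$, form a basis of the $q\leftrightarrow 1/q$-symmetric Laurent polynomials; but integrality is not a formal consequence of $L_{n,\beta}=L_{-n,\beta}$ alone, and must be extracted from the finer multiplicative (plethystic) structure of the series $\sum_{n}L_{n,\beta}q^{n}$ established in~\cite{Tolim2}, \cite{BrH}. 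Once that is in place, (ii) follows, and (i) is then also immediate, since every factor in (\ref{GVform}) is manifestly a rational function of $q$ invariant under $q\leftrightarrow 1/q$.
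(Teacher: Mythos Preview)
The paper does not supply its own proof of this corollary: it is stated with attribution to \cite[Theorem~1.1]{BrH} and \cite[Theorem~6.4]{Tsurvey}, and the text immediately moves on to the next subsection. So there is no in-paper argument to compare against; your proposal is effectively a reconstruction of the argument from those references.

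That said, your outline is essentially the standard one. For (i), the two inputs you use---periodicity $N_{n+\omega\cdot\gamma,\gamma}=N_{n,\gamma}$ via $-\otimes\oO_X(1)$ and the symmetry $N_{n,\gamma}=N_{-n,\gamma}$ via derived duality---are exactly what is used in \cite{Tolim2}, \cite{Tsurvey}, and your passage from periodic coefficients to rationality and $q\leftrightarrow 1/q$-invariance is correct. For the direction Conjecture~\ref{conj:mult2} $\Rightarrow$ Conjecture~\ref{conj:PTGV} in (ii), your computation collapsing $A(X)$ to the genus-zero factor of (\ref{GVform}) with $n_0^{\beta}=N_{1,\beta}$ is right, and you correctly isolate the residual problem: one must still show that $L(X)$ admits the higher-genus product form with \emph{integer} exponents $n_g^{\beta}$. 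This is not formal and is where the argument genuinely relies on the finer structure of $L_{n,\beta}$ established in \cite{Tolim2}, \cite{BrH}; you are right to flag it rather than to hand-wave it away. Your converse direction, matching pole structure in $q$ and inducting on $\omega\cdot\beta$, is also the correct strategy, though the final step---deducing $N_{n,\beta}=\sum_{k|(n,\beta)}k^{-2}N_{1,\beta/k}$ from the equality of rational functions using periodicity---deserves a line of justification (it amounts to the fact that the functions $q^r/(1-q^p)$ for $1\le r\le p$ are linearly independent).

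In short: there is no paper proof to compare with, your sketch follows the route of the cited references, and the one genuine gap you point out (integrality of the higher-genus $n_g^{\beta}$) is indeed the nontrivial input that must be imported from \cite{Tolim2}, \cite{BrH}.
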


\subsection{Multiple cover formula via parabolic stable pairs}
Recall that we have given a formula relating 
invariants counting parabolic stable pairs to 
the invariants $N_{n, \beta}$. In this subsection, 
we see that conjecture~\ref{conj:mult2}
is also equivalent to a conjectural product expansion 
formula of the series $\mathrm{DT}^{\rm{par}}(\mu, d)$. 
Namely we have the following proposition. 
\begin{prop}\label{prop:mult:para}
The formula (\ref{form:mult:N}) holds for any $(n, \beta)$
with $\beta \cdot \omega \le d$
and $n/\beta \cdot \omega =\mu$ if and only if 
the following formula holds in $\Lambda_{\le d}$, 
\begin{align}\label{cor:form2}
\mathrm{DT}^{\rm{par}}(\mu, d)
=\prod_{\begin{subarray}{c}
\beta>0, \\
 n/\omega \cdot \beta=\mu
\end{subarray}}
\left(1-(-1)^{\beta \cdot H} q^n t^{\beta}\right)^{(\beta \cdot H)N_{1, \beta}}
\end{align}
\end{prop}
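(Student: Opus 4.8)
The plan is to derive the equivalence directly from Theorem~\ref{thm:DTpar} by taking logarithms of both sides of (\ref{cor:form2}) and comparing the coefficients of $q^n t^{\beta}$ term by term. Let $\mathfrak{m}\subset\Lambda_{\le d}$ be the ideal spanned by the $q^n t^\beta$ with $\beta>0$; since $\omega\cdot\beta\ge 1$ for every effective nonzero $\beta$, we have $\mathfrak{m}^{d+1}=0$, so $\exp$ and $\log$ are mutually inverse bijections between $1+\mathfrak{m}$ and $\mathfrak{m}$. Both sides of (\ref{cor:form2}) lie in $1+\mathfrak{m}$, so (\ref{cor:form2}) is equivalent to the equality of their logarithms. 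Throughout I will only consider pairs $(n,\beta)$ with $\beta>0$ effective, $\omega\cdot\beta\le d$ and $n=\mu(\omega\cdot\beta)$, and I record the elementary fact that for such $\beta$ one has $\beta\cdot H=h(\omega\cdot\beta)>0$, where $H\in\lvert\oO_X(h)\rvert$, so that division by $\beta\cdot H$ is harmless.

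First I would compute the logarithm of the left hand side. Using Theorem~\ref{thm:DTpar} (formula (\ref{form:thm2})) together with the identity $\exp(y)^{m}=\exp(my)$ for a positive integer $m$, one gets
\begin{align*}
\log \mathrm{DT}^{\rm{par}}(\mu, d)=\sum_{\begin{subarray}{c}\beta>0\\ n=\mu(\omega\cdot\beta)\end{subarray}}(\beta\cdot H)(-1)^{\beta\cdot H-1}N_{n,\beta}\, q^n t^{\beta},
\end{align*}
so the coefficient of $q^n t^\beta$ in it is $-(-1)^{\beta\cdot H}(\beta\cdot H)N_{n,\beta}$.

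Next I would expand the logarithm of the right hand side of (\ref{cor:form2}) via $\log(1-x)=-\sum_{k\ge1}x^k/k$ with $x=(-1)^{\beta\cdot H}q^n t^\beta$, obtaining a double sum over $\beta$ and $k\ge1$ of terms proportional to $(-1)^{k(\beta\cdot H)}q^{kn}t^{k\beta}/k$. I would then re-index by $\beta'=k\beta$, $n'=kn$ (which is compatible with $n'=\mu(\omega\cdot\beta')$) and collect the coefficient of a fixed $q^{n'}t^{\beta'}$: the pairs $(\beta,k)$ that contribute are exactly those with $k\mid(n',\beta')$, and using $k(\beta\cdot H)=\beta'\cdot H$, $(\beta\cdot H)=(\beta'\cdot H)/k$ and $N_{1,\beta}=N_{1,\beta'/k}$ one finds this coefficient to be $-(-1)^{\beta'\cdot H}(\beta'\cdot H)\sum_{k\mid(n',\beta')}k^{-2}N_{1,\beta'/k}$. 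Comparing with the previous step and cancelling the nonzero integer $-(-1)^{\beta\cdot H}(\beta\cdot H)$, the equality (\ref{cor:form2}) becomes equivalent to $N_{n,\beta}=\sum_{k\mid(n,\beta)}k^{-2}N_{1,\beta/k}$ holding for every admissible $(n,\beta)$, which is exactly (\ref{form:mult:N}). Since every manipulation is reversible, this settles both implications simultaneously.

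The two generating-function computations are routine. The one place demanding genuine care is the re-indexing step: I would need to verify that, upon passing from the double sum over $(\beta,k)$ to one over $(\beta'=k\beta,k)$, the set of admissible indices matches the divisor set $\{k:k\mid(n',\beta')\}$ precisely — both that $\beta'/k$ is effective and that $n'/k\in\mathbb{Z}$ — and that no term is lost or gained upon reducing modulo the ideal defining $\Lambda_{\le d}$, which uses $\omega\cdot(\beta'/k)\le\omega\cdot\beta'\le d$. Keeping the signs straight via $(-1)^{k(\beta\cdot H)}=(-1)^{\beta'\cdot H}$ is part of the same bookkeeping, and I expect this to be the only subtle point.
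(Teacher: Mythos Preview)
Your proposal is correct and follows essentially the same approach as the paper's own proof: take logarithms of both sides using Theorem~\ref{thm:DTpar}, expand $\log(1-x)$ as a power series, re-index the double sum by $(\beta',n')=(k\beta,kn)$, and compare coefficients. Your write-up is in fact somewhat more careful than the paper's, in that you explicitly justify why $\log$ and $\exp$ are well-defined (via nilpotency of $\mathfrak{m}$) and why the common factor $-(-1)^{\beta\cdot H}(\beta\cdot H)$ is nonzero and may be cancelled.
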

\begin{proof}
By taking the logarithm of both sides of 
(\ref{form:thm2}), we have 
\begin{align}\label{logPar}
\log \mathrm{DT}^{\rm{par}}(\mu, d)=
\sum_{\begin{subarray}{c}
\beta>0, \\
n/\omega \cdot \beta =\mu
\end{subarray}}
(-1)^{\beta \cdot H -1}(\beta \cdot H)
N_{n, \beta}q^n t^{\beta}. 
\end{align}
On the other hand, the logarithm 
of the RHS of (\ref{cor:form2}) is 
\begin{align}\notag
&\sum_{\begin{subarray}{c}
\beta>0, \\
n/\omega \cdot \beta =\mu
\end{subarray}}
(\beta \cdot H)
N_{1, \beta} \log \left( 1-(-1)^{\beta \cdot H}q^n t^{\beta} \right) \\
&=\label{logPar2}
\sum_{\begin{subarray}{c}
\beta>0, \\
n/\omega \cdot \beta =\mu
\end{subarray}}
\sum_{k\ge 1, k|(n, \beta)}
\frac{(-1)^{\beta \cdot H -1}}{k^2}
(\beta \cdot H) N_{1, \beta/k}q^n t^{\beta}. 
\end{align}
Comparing (\ref{logPar}) with (\ref{logPar2}), we obtain the result. 
\end{proof}
\begin{rmk}\label{enough:check}
The proof of Proposition~\ref{prop:mult:para} shows that, 
in order to check the formula (\ref{form:mult:N}) 
for a specific $(\beta, n)$, 
it is enough to check the equality of the
coefficients of $q^{n}t^{\beta}$
of $\log \mathrm{DT}^{\rm{par}}(\mu, d)$
and the logarithm of the RHS of (\ref{cor:form2}). 
\end{rmk}
By the above proposition, the following 
conjecture is equivalent to both of Conjecture~\ref{conj:mult2} 
and Conjecture~\ref{conj:PTGV}. 
\begin{conj}\label{conj:prod:exp}
We have the following formula in $\Lambda_{\le d}$, 
\begin{align}\notag
\mathrm{DT}^{\rm{par}}(\mu, d)
=\prod_{\begin{subarray}{c}
\beta>0, \\
 n/\omega \cdot \beta=\mu
\end{subarray}}
\left(1-(-1)^{\beta \cdot H} 
q^n t^{\beta}\right)^{(\beta \cdot H)N_{1, \beta}}.
\end{align}
\end{conj}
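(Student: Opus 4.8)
The plan is to establish Conjecture~\ref{conj:prod:exp} directly on the parabolic stable pair side. By Proposition~\ref{prop:mult:para} the formula is equivalent to the multiple cover formula (\ref{form:mult:N}) for $N_{n,\beta}$, so one could in principle attack the latter; but $N_{n,\beta}$ is defined through the Lie algebra homomorphism $\Upsilon$ and the logarithm, in the Hall algebra, of the moduli stack of semistable sheaves, whereas $\DT^{\rm{par}}_{n,\beta}$ is an honest weighted Euler characteristic of the projective scheme $M_n^{\rm{par}}(X,\beta)$ provided by Theorem~\ref{thm:moduli}. The strategy is therefore to compute $\mathrm{DT}^{\rm{par}}(\mu,d)$ and then read off the identity for $N_{n,\beta}$: by Theorem~\ref{thm:DTpar} one has $\log\mathrm{DT}^{\rm{par}}(\mu,d)=\sum_{\beta>0,\,n=\mu(\omega\cdot\beta)}(-1)^{\beta\cdot H-1}(\beta\cdot H)N_{n,\beta}q^n t^{\beta}$, so by Remark~\ref{enough:check} matching this with the logarithm of the right-hand side of (\ref{cor:form2}) is a term-by-term comparison of coefficients of $q^n t^{\beta}$, and the target identity becomes exactly $N_{n,\beta}=\sum_{k\ge 1,\,k|(n,\beta)}\frac{1}{k^2}N_{1,\beta/k}$.

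First I would reduce to a local statement. Localising the moduli space $M_n^{\rm{par}}(X,\beta)$ along the closed subscheme of parabolic stable pairs whose underlying sheaf is supported on a fixed reduced curve $C\subset X$ produces, for each such $C$, a local parabolic stable pair theory with its own generating series, and one checks that $\mathrm{DT}^{\rm{par}}(\mu,d)$ factors as a product of these local contributions; it then suffices to prove the local analogue of (\ref{cor:form2}). This step is where the specific shape of our pairs pays off — there is no analogue for the pairs (\ref{JS:pair}) of Joyce-Song — because the local moduli space sits inside a moduli space of stable pairs on a neighbourhood of $C$, which makes a Jacobian symmetry visible.

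The heart of the argument is then a localisation with respect to the action of the Jacobian group $\Pic^0(C)$ on the local moduli space, obtained by twisting the underlying sheaf $F$ by a degree-zero line bundle together with the induced isomorphism on $F\otimes\oO_H$, an operation that preserves parabolic stability and the numerical data $[F],\chi(F)$. Since $\Pic^0(C)$ is connected the Behrend function is constant along its orbits, and orbits of positive dimension have vanishing Euler characteristic, so the weighted Euler characteristic localises to the $\Pic^0(C)$-fixed locus. When $C$ has at worst nodal singularities I would aim to describe this fixed locus explicitly in terms of symmetric products built from the components of the normalisation of $C$ and its nodes, evaluate the Behrend weights there, and sum the contributions; comparing the result coefficient by coefficient with the logarithm of the right-hand side of (\ref{cor:form2}) should then reproduce the multiple cover identity above, with the terms $N_{1,\beta/k}$ arising from the degenerate strata.

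The main obstacle will be the explicit evaluation of these fixed-locus contributions together with their Behrend-function weights, and in particular the correct bookkeeping of the signs $(-1)^{\beta\cdot H}$. For curves with singularities worse than nodal the relevant fixed loci sit inside compactified Jacobians whose geometry is much less transparent, and it is unclear that the combinatorial description persists; this is the principal reason the method, at least in its first form, is confined to the nodal case. A secondary point is to verify that the reduction to the local theory, and the product factorisation in $\Lambda_{\le d}$, are insensitive to the auxiliary choice of $H\in\lvert\oO_X(h)\rvert$ from Lemma~\ref{lem:trans}, whose construction a priori depends on $d$. All of this is carried out in the subsequent paper~\cite{Todmu}.
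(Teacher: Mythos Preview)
The statement you are attempting to prove is a \emph{conjecture} in the paper, not a theorem; there is no proof in the paper to compare against. The paper explicitly states that Conjecture~\ref{conj:prod:exp} is equivalent to Conjecture~\ref{conj:mult2} (via Proposition~\ref{prop:mult:para}) and to Conjecture~\ref{conj:PTGV}, and leaves all three open. What you have written is not a proof but a sketch of the strategy the paper itself advocates in Section~\ref{sec:mult} and Remark~\ref{rmk:final}, deferring the actual work to~\cite{Todmu}. Even there, as the paper and your own final paragraph acknowledge, the method only handles the case where the supporting curve has at worst nodal singularities; the general case remains open. So your proposal does not prove the conjecture.

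Two smaller points. First, your claim that $\mathrm{DT}^{\rm{par}}(\mu,d)$ ``factors as a product of these local contributions'' is not how the reduction works in the paper. Proposition~\ref{loc:global} passes from local to global not by a product decomposition of the generating series but by integrating the constructible functions $\gamma\mapsto\widehat{\DT}^{\rm{par}}_{n,\gamma}$ and $\gamma\mapsto\widehat{N}_{n,\gamma}$ over the Chow variety $\Chow_{\beta}(X)$ and showing both integrals recover the global quantities $\widehat{\DT}^{\rm{par}}_{n,\beta}$ and $\widehat{N}_{n,\beta}$; the local identity~(\ref{wtilde2}) then transfers to the global one~(\ref{wtilde}). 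Second, even granting the nodal localisation in~\cite{Todmu}, you would still need to know that every effective one-cycle contributing to $N_{n,\beta}$ is supported on a curve whose normalisation is a disjoint union of $\mathbb{P}^1$'s (cf.\ Remark~\ref{rmk:final}), and that the remaining non-nodal rational curves can be handled; neither step is addressed.
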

By Remark~\ref{enough:check}, the above 
conjecture is equivalent to the following: 
if we set $\widehat{\DT}_{n, \beta}^{\rm{par}} \in \mathbb{Q}$
by 
\begin{align}\label{DTtilde}
\log \DT^{\rm{par}}(\mu, d)=
\sum_{n, \beta} \widehat{\DT}_{n, \beta}^{\rm{par}}q^n t^{\beta}, 
\end{align}
then we have the formula, 
\begin{align*}
\widehat{\DT}_{n, \beta}^{\rm{par}}=
(-1)^{\beta \cdot H -1}(\beta \cdot H) \sum_{k\ge 1, k|(n, \beta)}
\frac{1}{k^2} N_{1, \beta/k}. 
\end{align*}

\subsection{Local parabolic stable pair invariants}
In the following subsections, we study the local 
version of parabolic stable pairs and 
relevant results. All the 
arguments are similar to the global 
case, and we omit several details. 

In what follows, we fix a reduced subscheme,
\begin{align*}
i\colon C \subset X, 
\end{align*}
with $\dim C=1$. 
We also fix a divisor in $X$, 
\begin{align}\label{choice:H}
H \in \lvert \oO_X(h) \rvert, \quad h>0, 
\end{align}
which intersects with $C$ transversally. 
Of course, any one cycle on $X$ supported on 
$C$ intersects with $H$ transversally. 
On the other hand,
 we do not assume the transversality 
for the intersection of $H$ with curves
of bounded degree other than $C$. 
In this sense, the way we choose for (\ref{choice:H}) is different 
from the way for $H \subset X$ in Lemma~\ref{lem:trans}. 
The former one depends on the curve $C$, while 
the latter one depends on the degree $d \in \mathbb{Z}_{>0}$. 

Let $M_n^{\rm{par}}(X, \beta)$ be the moduli space 
of parabolic stable pairs
with respect to the above choice of $H$. 
Since $H$ may not satisfy the condition in Lemma~\ref{lem:trans}, 
the moduli space $M_n^{\rm{par}}(X, \beta)$
may not be projective, 
but it 
is at least a quasi-projective 
scheme as we mentioned in Remark~\ref{rmk:quasi}. 
Let $\Chow_{\beta}(X)$
be the Chow variety parameterizing 
one cycles $C' \subset X$ with $[C']=\beta$. 
There is a Hilbert-Chow type morphism, 
\begin{align*}
\pi_{\beta} \colon 
M_n^{\rm{par}}(X, \beta) \to \Chow_{\beta}(X), 
\end{align*}
sending a parabolic stable pair $(F, s)$
to the one cycle $[F]$. 

Let $C_1, \cdots, C_N$ be the set of 
irreducible components of $C$. 
We have 
\begin{align*}
H_2(C, \mathbb{Z}) \cong \bigoplus_{i=1}^{N}
\mathbb{Z}[C_i], 
\end{align*}
and we can identify $H_2(C, \mathbb{Z})$
with the group of one cycles on $X$ supported on $C$.
The effective cone is defined by, 
\begin{align*}
H_2(C, \mathbb{Z})_{>0}
\cneq \left\{ \sum_{i=1}^{N}a_i [C_i] :
a_i \ge 0 \right\} \setminus \{0\} 
\subset H_2(C, \mathbb{Z}). 
\end{align*}
For each $\gamma \in H_2(C, \mathbb{Z})_{>0}$, 
we can regard it as 
\begin{align*}
\gamma \in \Chow_{i_{\ast}\gamma}(X). 
\end{align*} 
We define the local parabolic stable pair 
invariant in the following way. 
\begin{defi}\label{loc:inv}
For each $\gamma \in H_2(C, \mathbb{Z})_{>0}$, we define 
$\mathrm{DT}_{n, \gamma}^{\rm{par}} \in \mathbb{Z}$
to be 
\begin{align*}
\mathrm{DT}_{n, \gamma}^{\rm{par}}
\cneq \int_{\pi_{i_{\ast}\gamma}^{-1}(\gamma)} \nu_{M} d\chi. 
\end{align*}
\end{defi} 
Here we note that $\nu_M$ is a Behrend function on 
$M_n^{\rm{par}}(X, i_{\ast}\gamma)$, not on 
the fiber $\pi_{i_{\ast}\gamma}^{-1}(\gamma)$. 

\subsection{Local generalized DT invariants}
We can also define the local version of
generalized DT invariants
in a way similar to Definition~\ref{defi:Nn}. 
Namely we replace the category 
$\aA(\mu, d)$ by the category of pairs, 
\begin{align}\label{pair:NF}
N_{H/X}^{\oplus r}[-1] \to F, 
\end{align}
where $F$ is an $\omega$-semistable sheaf supported on $C$, 
satisfying $\mu_{\omega}(F)=\mu$. 
The category consisting of the above 
pairs is denoted by $\aA(\mu, C)$. 
The moduli stack of objects in $\aA(\mu, C)$
can be constructed as in Subsection~\ref{subsec:stackA}. 
Namely let $\mathrsfs{A}(\mu)$ be the stack of 
pairs $N_{H/X}^{\oplus r}[-1] \to F$, where 
$F$ is $\mu_{\omega}$-semistable 
with $\mu_{\omega}(F)=\mu$, but 
not necessary supported on $C$. 
By the arguments in Subsection~\ref{subsec:stackA} and 
Remark~\ref{rmk:quasi}, the 
stack $\mathrsfs{A}(\mu)$ 
can be shown to be an Artin stack 
locally of finite type over $\mathbb{C}$. 
The desired stack of objects in $\aA(\mu, C)$
is the closed substack, 
\begin{align*}
\mathrsfs{A}(\mu, C) \subset \mathrsfs{A}(\mu).
\end{align*}
The stack $\mathrsfs{A}(\mu, C)$ 
decomposes as 
\begin{align*}
\mathrsfs{A}(\mu, C)=\coprod_{(r, \gamma, n) \in 
\Gamma(\mu, C)}
\mathrsfs{A}_{r, \gamma, n},
\end{align*}
where $\mathrsfs{A}_{r, \gamma, n}$ is 
the stack of pairs (\ref{pair:NF}) 
with $[F]=\gamma$ as a one cycle supported on $C$,
and $\Gamma(\mu, C)$
is defined similarly to (\ref{Gmu}), 
by replacing $H_2(X, \mathbb{Z})$ by $H_2(C, \mathbb{Z})$. 
 
Hence we have the Hall type algebra $H(\mu, C)$
and the Lie subalgebra of virtual indecomposable objects, 
\begin{align*}
H^{\rm{Lie}}(\mu, C) \subset H(\mu, C).
\end{align*} 
The Lie algebra $C(\mu, C)$ is also 
defined similarly to (\ref{C(mu)}), just by 
replacing $H_2(X, \mathbb{Z})$ by $H_2(C, \mathbb{Z})$,
\begin{align*}
C(\mu, C)=\bigoplus_{(r, \gamma, n) \in
\Gamma(\mu, C)}
\mathbb{Q}c_{(r, \gamma, n)}. 
\end{align*}
Here we do not take a quotient as in defining $C(\mu, d)$. 

The Lie algebra homomorphism 
\begin{align*}
\Upsilon \colon H^{\rm{Lie}}(\mu, C)
\to C(\mu, C),
\end{align*}
can be similarly constructed as in 
Theorem~\ref{thm:Lie}. 
The only point we have to notice is that
we use the Behrend function on $\mathrsfs{A}(\mu)$, 
not on $\mathrsfs{A}(\mu, C)$. 
As in the proof of Theorem~\ref{thm:Lie}, 
the stack $\mathrsfs{A}(\mu)$ is analytic locally 
written as a critical locus of some holomorphic
function, hence the same argument 
in Theorem~\ref{thm:Lie} can be applied. 
Also for an element
\begin{align*}
u=\left[ [M/\mathbb{C}^{\ast}] \stackrel{\rho}{\to}
\mathrsfs{A}_{r, \gamma, n} \right] \in H^{\rm{Lie}}(\mu, C), 
\end{align*}
where $M$ is a $\mathbb{C}$-scheme with a trivial 
$\mathbb{C}^{\ast}$-action, we have 
\begin{align}\notag
\Upsilon(u)=\left( \int_{M} \rho^{\ast} \nu_{\mathrsfs{A}} d\chi   \right)
c_{(r, \gamma, n)}. 
\end{align}
Here $\nu_{\mathrsfs{A}}$ is the 
Behrend function on $\mathrsfs{A}(\mu)$
restricted to $\mathrsfs{A}_{r, \gamma, n}$, 
which may be different from that on $\mathrsfs{A}_{r, \gamma, n}$. 

The elements, 
\begin{align*}
\delta_{r, \gamma, n}^{\alpha} \in H(\mu, C), \ 
\epsilon_{r, \gamma, n}^{\alpha} \in H^{\rm{Lie}}(\mu, C),
\end{align*}
can be defined similarly to (\ref{delta}), (\ref{epsilon}), 
using similar $\widehat{\mu}_{\alpha}$-semistability on $\aA(\mu, C)$
and the moduli stack of $\widehat{\mu}_{\alpha}$-semistable 
pairs (\ref{pair:NF}). 
The local generalized DT invariant is defined 
as follows. 
\begin{defi}
For $(0, \gamma, n) \in \Gamma(\mu, C)$, the invariant 
$N_{n, \gamma} \in \mathbb{Q}$ is defined 
by 
\begin{align*}
\Upsilon(\epsilon_{0, \gamma, n}) =-N_{n, \gamma} c_{(0, \gamma, n)}. 
\end{align*}
\end{defi} 
\begin{rmk}
Similarly to Remark~\ref{rmk:depend}, 
the invariant $N_{n, \gamma}$ does not depend on $\omega$. 
\end{rmk}
\begin{rmk}
Suppose that $n$ and $\gamma$ are coprime. 
Then there is an ample $\mathbb{Q}$-divisor $\omega$
such that the moduli space of $\omega$-stable 
sheaves $F$ with $\chi(F)=n$ and $[F]=\gamma$
as a one cycle is a closed subscheme, 
\begin{align*}
M_n(C, \gamma) \subset M_n(X, i_{\ast}\gamma). 
\end{align*}
As in Remark~\ref{rmk:coprime}, the invariant 
$N_{n, \gamma}$ is given by 
\begin{align}\label{N_n}
N_{n, \gamma} =\int_{M_n(C, \gamma)} \nu_{M} d\chi, 
\end{align}
where $\nu_M$ is the Behrend function on 
$M_n(X, i_{\ast}\gamma)$, not on $M_n(C, \gamma)$. 
\end{rmk}

\subsection{Generating series of local invariants}
Let $\mathrm{DT}^{\rm{par}}(\mu, C)$ be the 
generating series, 
\begin{align*}
\mathrm{DT}^{\rm{par}}(\mu, C)
\cneq 1+ \sum_{\begin{subarray}{c}
n\in \mathbb{Z}, \ 
\gamma \in H_2(C, \mathbb{Z})_{>0}, \\
n/\omega \cdot \gamma=\mu
\end{subarray}}
\mathrm{DT}_{n, \gamma}^{\rm{par}}
q^n t^{\gamma} \in \Lambda_{C}.
\end{align*}
Here $\Lambda_{C}$ is defined by 
\begin{align*}
\Lambda_{C} \cneq \prod_{
n \in \mathbb{Z}, 
\gamma \in H_2(C, \mathbb{Z})_{>0}}
\mathbb{Q} q^n t^{\gamma}. 
\end{align*}
As an analogy of Theorem~\ref{thm:DTpar}, 
the following result holds. 
\begin{thm}
We have the following formula in $\Lambda_{C}$, 
\begin{align}\label{form:thm3}
\mathrm{DT}^{\rm{par}}(\mu, C)
=\prod_{\begin{subarray}{c}
n\in \mathbb{Z}, \gamma \in H_2(C, \mathbb{Z})_{>0}, \\
n/\omega \cdot \gamma =\mu
\end{subarray}}
\exp\left( (-1)^{\gamma \cdot H -1} 
N_{n, \gamma}q^n t^{\gamma}  \right)^{\gamma \cdot H}. 
\end{align}
\end{thm}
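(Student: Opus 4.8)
The plan is to imitate the proof of Theorem~\ref{thm:DTpar} word for word, now working in the Hall algebra $H(\mu, C)$, the Lie subalgebra $H^{\mathrm{Lie}}(\mu, C)$, the Lie algebra $C(\mu, C)$ and the homomorphism $\Upsilon\colon H^{\mathrm{Lie}}(\mu, C)\to C(\mu, C)$ constructed in the preceding subsections. First I would fix $\alpha\in\mathbb{Q}$ with $\alpha>\mu$ and record the local analogues of Proposition~\ref{prop:stab}, Corollary~\ref{cor:HN} and Proposition~\ref{prop:ident}: the objects $N_{H/X}[-1]\to F$ of $\aA(\mu, C)$ have $2$-step Harder--Narasimhan filtrations with respect to $\widehat{\mu}_{\alpha}$, and $\widehat{\mu}_{\alpha}$-stability of such an object is equivalent to parabolic stability of the associated pair $(F, s)$, so that for $(0,\gamma,n)\in\Gamma(\mu, C)$ one has in $H(\mu, C)$ the identity
\[
\delta_{1,\gamma,n}^{\mu}=\delta_{0,\gamma,n}\ast\delta_{1,0,0}=\sum_{\begin{subarray}{c}(\gamma_1,n_1)+(\gamma_2,n_2)=(\gamma,n)\\ n_i=\mu(\omega\cdot\gamma_i)\end{subarray}}\delta_{1,\gamma_1,n_1}^{\alpha}\ast\delta_{0,\gamma_2,n_2}.
\]
Summing over all $n=\mu(\omega\cdot\gamma)$ and writing $\delta^{\alpha}=\sum\delta_{1,\gamma,n}^{\alpha}=\sum\epsilon_{1,\gamma,n}^{\alpha}$, $\mathfrak{E}=\sum\epsilon_{0,\gamma,n}$, this yields $\bigl(\sum\delta_{0,\gamma,n}\bigr)\ast\delta_{1,0,0}=\delta^{\alpha}\ast\bigl(\sum\delta_{0,\gamma,n}\bigr)$; since $1+\sum\delta_{0,\gamma,n}=\exp(\mathfrak{E})$, the Baker--Campbell--Hausdorff formula gives the conjugation identity $\delta^{\alpha}=\exp(\mathfrak{E})\ast\delta_{1,0,0}\ast\exp(\mathfrak{E})^{-1}=\sum_{k\ge 0}\tfrac{1}{k!}(\mathrm{Ad}_{\mathfrak{E}})^{k}(\delta_{1,0,0})$, exactly as in the proof of Theorem~\ref{thm:DTpar}.

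Next I would apply $\Upsilon$ to this identity. The two inputs are $\Upsilon(\epsilon_{1,\gamma,n}^{\alpha})=-\mathrm{DT}_{n,\gamma}^{\rm{par}}c_{(1,\gamma,n)}$, which holds because the moduli stack $\mathrsfs{M}_{1,\gamma,n}^{\alpha}$ of $\widehat{\mu}_{\alpha}$-semistable objects of $\aA(\mu, C)$ with one-cycle class $\gamma$ is $[\pi_{i_{\ast}\gamma}^{-1}(\gamma)/\mathbb{C}^{\ast}]$ by the local analogue of the isomorphism (\ref{one:isom}), so that $\int\rho^{\ast}\nu_{\mathrsfs{A}}\,d\chi$ equals $\mathrm{DT}_{n,\gamma}^{\rm{par}}$ of Definition~\ref{loc:inv} up to the sign coming from the $\mathbb{C}^{\ast}$-quotient; and $\Upsilon(\epsilon_{0,\gamma,n})=-N_{n,\gamma}c_{(0,\gamma,n)}$ by the definition of the local invariant. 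On $C(\mu, C)$ the Lie bracket is $[c_{v_1},c_{v_2}]=(-1)^{\chi(v_1,v_2)}\chi(v_1,v_2)c_{v_1+v_2}$ with $\chi((r_1,\gamma_1,n_1),(r_2,\gamma_2,n_2))=r_2(\gamma_1\cdot H)-r_1(\gamma_2\cdot H)$, so $\mathrm{Ad}_{c_{(0,\gamma_i,n_i)}}$ applied to a class $c_{(1,\ast,\ast)}$ multiplies it by $-(-1)^{\gamma_i\cdot H}(\gamma_i\cdot H)$ and shifts the index; iterating, dividing by $k!$ and summing over $k$ reproduces the expansion of $\prod\exp\bigl((-1)^{\gamma\cdot H-1}N_{n,\gamma}q^{n}t^{\gamma}\bigr)^{\gamma\cdot H}$ exactly as in the display (\ref{ident1}). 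Comparing the coefficients of each $c_{(1,\gamma,n)}$ then gives (\ref{form:thm3}).

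The one genuinely new point --- and the step I expect to need the most care --- is that $\Lambda_{C}$ is a completed ring (an infinite product over the effective cone $H_2(C,\mathbb{Z})_{>0}$), not the finite quotient $\Lambda_{\le d}$, and that the divisor $H$ chosen in (\ref{choice:H}) is not assumed to satisfy Lemma~\ref{lem:trans}, so the relevant moduli stacks are only quasi-projective (cf.~Remark~\ref{rmk:quasi}); one also has to check that the restriction of $\nu_{\mathrsfs{A}}$ to the parabolic stable locus agrees with the Behrend function used in Definition~\ref{loc:inv}. To make the Hall-algebra manipulations and the application of $\Upsilon$ rigorous I would fix an arbitrary $\gamma_0\in H_2(C,\mathbb{Z})_{>0}$ and carry out the whole argument modulo the ideal generated by $q^{n}t^{\gamma}$ with $\gamma\not\le\gamma_0$ in the partial order on $H_2(C,\mathbb{Z})\cong\bigoplus_{i=1}^{N}\mathbb{Z}[C_i]$: since only finitely many effective classes lie below $\gamma_0$, all the stacks in sight are then of finite type, the sums over $k$ and over partitions are finite, and the structural results of Section~\ref{sec:wall} (the homomorphism $\Upsilon$ of Theorem~\ref{thm:Lie}, the elements $\epsilon$, the Baker--Campbell--Hausdorff formula) apply verbatim. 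Letting $\gamma_0$ vary and comparing coefficients of $q^{n}t^{\gamma}$ for every $\gamma$ then yields the identity (\ref{form:thm3}) in $\Lambda_{C}$. All the remaining details being formally identical to Section~\ref{sec:wall}, I would only indicate these modifications and refer to the proof of Theorem~\ref{thm:DTpar}.
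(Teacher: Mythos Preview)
Your proposal is correct and follows exactly the approach the paper takes: the paper's entire proof is the single sentence ``The same proof of Theorem~\ref{thm:DTpar} is applied.'' You have simply unpacked that sentence, and your additional care about the completion of $\Lambda_{C}$ (truncating at an arbitrary $\gamma_0$ to make the Hall-algebra sums finite) and about matching the Behrend function $\nu_{\mathrsfs{A}}|_{\mathrsfs{A}(\mu,C)}$ with the $\nu_M$ of Definition~\ref{loc:inv} is a legitimate refinement of points the paper leaves implicit.
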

\begin{proof}
The same proof of Theorem~\ref{thm:DTpar} is applied. 
\end{proof}

As an analogy of Conjecture~\ref{conj:mult2}
and Conjecture~\ref{conj:prod:exp}, we 
propose the following conjecture. 
\begin{conj}\label{conj:multloc}
For $(n, \gamma) \in \mathbb{Z} \oplus H_2(C, \mathbb{Z})$, 
we have the following formula, 
\begin{align}\label{mult:locform}
N_{n, \gamma}=
\sum_{k\ge 1, k|(n, \gamma)}
\frac{1}{k^2} N_{1, \gamma/k}. 
\end{align}
Or equivalently, we have the following formula
for any $\mu \in \mathbb{Q}$, 
\begin{align*}
\mathrm{DT}^{\rm{par}}(\mu, C)
=\prod_{\begin{subarray}{c}
\gamma \in H_2(C, \mathbb{Z})_{>0}, \\
n/\omega \cdot \gamma=\mu
\end{subarray}}
\left(1-(-1)^{\gamma \cdot H}
q^n t^{\gamma}  \right)^{(\gamma \cdot H)N_{1, \gamma}}.
\end{align*}
\end{conj}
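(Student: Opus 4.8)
The plan is to reduce the statement to the product form and then to compute the geometric side $\mathrm{DT}^{\rm{par}}(\mu,C)$ directly, using the Jacobian action indicated in the introduction. First, taking logarithms exactly as in the proof of Proposition~\ref{prop:mult:para}, the two displayed formulas are equivalent: the multiple cover identity (\ref{mult:locform}) holds for all $(n,\gamma)$ with $n/\omega\cdot\gamma=\mu$ if and only if the coefficient of $q^n t^{\gamma}$ in $\log\mathrm{DT}^{\rm{par}}(\mu,C)$ equals $(-1)^{\gamma\cdot H-1}(\gamma\cdot H)\sum_{k\mid(n,\gamma)}k^{-2}N_{1,\gamma/k}$. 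Combined with the local analogue of Theorem~\ref{thm:DTpar} (formula (\ref{form:thm3})), which already gives $\log\mathrm{DT}^{\rm{par}}(\mu,C)=\sum(-1)^{\gamma\cdot H-1}(\gamma\cdot H)N_{n,\gamma}q^nt^{\gamma}$, it therefore suffices to prove the multiple cover formula for the local invariants $N_{n,\gamma}$ themselves. The advantage of the present setup is that $N_{n,\gamma}$ is tied, through (\ref{form:thm3}), to the integer invariant $\mathrm{DT}^{\rm{par}}_{n,\gamma}=\int_{\pi_{i_{\ast}\gamma}^{-1}(\gamma)}\nu_M\,d\chi$ of Definition~\ref{loc:inv}, a Behrend-weighted Euler characteristic of an honest quasi-projective scheme and hence accessible to torus-type localization.

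The key geometric input is the action of the Jacobian $\Pic^0(C)$ on the fibre $\pi_{i_{\ast}\gamma}^{-1}(\gamma)$, given by $(F,s)\mapsto(F\otimes L,\,s\otimes\mathrm{id}_L)$ for $L\in\Pic^0(C)$. This is well defined: tensoring by a degree-zero line bundle on $C$ preserves the support cycle $[F]=\gamma$, the Euler characteristic $\chi(F)=n$, and $\omega$-semistability, and it carries the third condition of Definition~\ref{defi:para} for $s$ into the corresponding condition for $s\otimes\mathrm{id}_L$, since destabilizing quotients of $F$ and of $F\otimes L$ correspond under $-\otimes L$. Because $-\otimes L$ is an autoequivalence of $\Coh(X)$ preserving the moduli problem, the intrinsic Behrend function $\nu_M$ is invariant under this action. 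Now $\Pic^0(C)$ is an abelian variety, so $\chi(\Pic^0(C))=0$, and more generally every positive-dimensional orbit has vanishing topological Euler characteristic. Stratifying the fibre by orbit type and using multiplicativity of $\chi$ together with the invariance of $\nu_M$ (the same principle that reduces Behrend-weighted counts to fixed loci under a $\mathbb{C}^{\ast}$-action, cf.~\cite{Beh}, \cite{JS}), all strata with positive-dimensional orbits contribute zero. Hence
\begin{align*}
\mathrm{DT}^{\rm{par}}_{n,\gamma}=\int_{\left(\pi_{i_{\ast}\gamma}^{-1}(\gamma)\right)^{\Pic^0(C)}}\nu_M\,d\chi,
\end{align*}
and the whole invariant localizes onto the fixed locus of pairs $(F,s)$ with $F\otimes L\cong F$ for all $L\in\Pic^0(C)$.

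The remaining, and by far the hardest, step is the analysis of this fixed locus. A pair $(F,s)$ is fixed precisely when $\{L:F\otimes L\cong F\}$ is all of $\Pic^0(C)$, which forces a $k$-fold self-similarity of $F$ for some $k\mid(n,\gamma)$; morally such $F$ are pushed forward from étale $\mathbb{Z}/k$-covers of $C$, or supported on $k$-fold structures lying over the primitive class $\gamma/k$. The aim is to show, for each $k\mid(n,\gamma)$, that the corresponding fixed stratum is identified---up to an order-$k$ automorphism gerbe and a further $k$-fold covering ambiguity---with the $\chi=1$ moduli space underlying $N_{1,\gamma/k}$, so that the two independent factors of $k$ produce the weight $k^{-2}$ while the Euler-characteristic direction collapses to $\chi=1$. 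Summing the fixed-point contributions over $k$ then yields $\mathrm{DT}^{\rm{par}}_{n,\gamma}$, equivalently $N_{n,\gamma}$, as $\sum_{k\mid(n,\gamma)}k^{-2}N_{1,\gamma/k}$. Matching both the Behrend weights and the combinatorial factors $k^{-2}$ across these strata, and in particular establishing the collapse to $\chi=1$ together with the resulting independence of the primitive invariant from $n$, is the main obstacle; it appears tractable only under control of the singularities of $C$, so the argument is expected to go through when $C$ has at worst nodal singularities, with the details deferred to \cite{Todmu}.
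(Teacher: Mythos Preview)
The statement you are addressing is a \emph{conjecture} in the paper, not a theorem; the paper does not prove it. The only ``proof'' content implicit in the statement is the claimed equivalence of the two displayed formulations, and that follows exactly as you say, by the logarithm argument of Proposition~\ref{prop:mult:para} combined with the local product formula (\ref{form:thm3}). On that point your proposal and the paper agree.

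Your attempt to go further and actually \emph{establish} the conjecture via Jacobian localization is precisely the strategy the paper sketches in Remark~\ref{rmk:final} and explicitly defers to the sequel~\cite{Todmu}; it is not carried out in the present paper. You are honest that the hard step---identifying the $\Pic^{0}(C)$-fixed locus with the $\chi=1$ moduli and extracting the factor $k^{-2}$---is not done here, so there is no claim to judge against a nonexistent proof. That said, one technical point in your sketch is inaccurate: you assert that $\Pic^{0}(C)$ is an abelian variety and hence has $\chi=0$. For a singular reduced curve $C$, $\Pic^{0}(C)$ is only a \emph{semi}-abelian variety (an extension of the Jacobian of the normalization by a torus and possibly an additive group), and in the case emphasized in Remark~\ref{rmk:final}---$\widetilde{C}$ a union of $\mathbb{P}^{1}$'s with $C$ nodal of positive arithmetic genus---the abelian-variety part is trivial and the entire Jacobian is a torus $(\mathbb{C}^{\ast})^{g}$. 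The localization argument still has content in that case (free $\mathbb{C}^{\ast}$-orbits have $\chi=0$), but the reasoning is $\mathbb{C}^{\ast}$-localization rather than the abelian-variety fact you invoke, and the fixed locus can be nonempty. Conversely, when $C$ is a disjoint union of smooth rational curves, $\Pic^{0}(C)$ is trivial and your localization step is vacuous; the conjecture must then be attacked by other means. These are exactly the subtleties that make the conjecture nontrivial and that are postponed to~\cite{Todmu}.
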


\begin{exam}\label{exam:-12}
Let $f \colon X\to Y$
and $C \subset X$ be as in Example~\ref{exam:-1}, Example~\ref{exam:-1-1}. 
As in Example~\ref{exam:-1}, suppose that 
there is a divisor $H\in \lvert \oO_X(1) \rvert$
which intersects with $C$ at one 
point $p\in C$. Then by Example~\ref{exam:-1-1}, 
we have 
\begin{align*}
\mathrm{DT}^{\rm{par}}(\mu, C)=
\left\{ \begin{array}{cc}
1+q^{\mu}t, & \mu \in \mathbb{Z}, \\
1, & \mbox{ otherwise. } 
\end{array} \right. 
\end{align*}
Hence Conjecture~\ref{conj:multloc}
holds in this case with $N_{1, [C]}=1$
and $N_{1, m[C]}=0$ for $m\ge 2$. 
In particular
for $(m, n) \in \mathbb{Z}^{\oplus 2}$
with $m\ge 1$, 
we have $N_{n, m[C]} \neq 0$
only if $m|n$, and in this case, we have 
\begin{align}\label{Nnm}
N_{n, m[C]}=\frac{1}{m^2}.
\end{align}
\end{exam}
\begin{rmk}
In Example~\ref{exam:-12}, 
there may not exist a divisor $H\subset X$
which intersects with $C$ at a one point
in general. 
However such a divisor always exists on an analytic neighborhood 
of $C \subset U \subset X$, and we can deduce (\ref{Nnm}) 
by the arguments on $U$. 
\end{rmk}
\begin{rmk}
The formula (\ref{Nnm}) is well-known,
c.f.~\cite[Example~6.2]{JS}. 
However the argument in Example~\ref{exam:-12} 
seems to be the easiest 
argument to deduce (\ref{Nnm}). 
\end{rmk}

\subsection{From local theory to global theory}
Finally in this section, we 
show that Conjecture~\ref{conj:multloc} implies Conjecture~\ref{conj:mult2}
using parabolic stable pair invariants. 
\begin{prop}\label{loc:global}
For given $n\in \mathbb{Z}$ and $\beta \in H_2(X, \mathbb{Z})$, 
suppose that the formula (\ref{mult:locform})
holds for any reduced curve $C \stackrel{i}{\hookrightarrow} X$
and $\gamma \in H_2(C, \mathbb{Z})$
with $i_{\ast}\gamma =\beta$. 
Then $N_{n, \beta}$ satisfies the formula (\ref{form:mult:N}). 
\end{prop}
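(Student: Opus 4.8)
The plan is to fiber everything over the Chow variety and thereby reduce the global multiple cover formula for the fixed pair $(n,\beta)$ to the collection of local ones. Set $d=\omega\cdot\beta$ and choose a divisor $H$ as in Lemma~\ref{lem:trans} for this $d$. If $C\hookrightarrow X$ is a reduced curve carrying an effective class $\gamma$ with $i_{\ast}\gamma=\beta$, then $\omega\cdot C\le\omega\cdot\gamma=\omega\cdot\beta\le d$ (and likewise for every sub-curve and every effective sub-class), so this single $H$ is a legitimate choice of divisor for the local theory of every such $C$; by Remark~\ref{rmk:depend} (and its local analogue) the invariants $N_{\bullet,\bullet}$ do not depend on it. By Remark~\ref{enough:check} it then suffices to prove that the coefficient of $q^{n}t^{\beta}$ in $\log\DT^{\rm{par}}(\mu,d)$ equals $(-1)^{\beta\cdot H-1}(\beta\cdot H)\sum_{k\ge1,\,k\mid(n,\beta)}k^{-2}N_{1,\beta/k}$.

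First I would record, for every effective $\beta'\le\beta$ with $n'=\mu(\omega\cdot\beta')$, the decomposition
\begin{align*}
\DT^{\rm{par}}_{n',\beta'}=\int_{\Chow_{\beta'}(X)}\DT^{\rm{par}}_{n',\gamma_Z}\,d\chi(Z),
\end{align*}
where for $Z\in\Chow_{\beta'}(X)$ one writes $C_Z=\Supp(Z)_{\rm red}$, $i_Z\colon C_Z\hookrightarrow X$, and $\gamma_Z\in H_2(C_Z,\mathbb{Z})_{>0}$ for the cycle class of $Z$. This is immediate from Definition~\ref{loc:inv} together with the existence of the Hilbert-Chow morphism $\pi_{\beta'}\colon M^{\rm{par}}_{n'}(X,\beta')\to\Chow_{\beta'}(X)$ and the compatibility of $\chi$-integration with pushforward of constructible functions; note that $\DT^{\rm{par}}_{n',\gamma_Z}$ depends only on the cycle $Z$, not on any ambient reduced curve, since the moduli space and Behrend function only see $X$. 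Substituting this into the expansion of $\log\DT^{\rm{par}}(\mu,d)$ and pushing forward along the summation morphism $\prod_i\Chow_{\beta_i}(X)\to\Chow_{\beta}(X)$, a short combinatorial identity on the (finite) fibers --- over a cycle $Z$ supported on $C_Z$, the effective decompositions $Z=Z_1+\cdots+Z_{\ell}$ correspond bijectively to the ordered decompositions $\gamma_Z=\gamma'_1+\cdots+\gamma'_{\ell}$ in $H_2(C_Z,\mathbb{Z})_{>0}$, compatibly with the local invariants --- yields
\begin{align*}
[q^{n}t^{\beta}]\log\DT^{\rm{par}}(\mu,d)=\int_{\Chow_{\beta}(X)}\bigl([q^{n}t^{\gamma_Z}]\log\DT^{\rm{par}}(\mu,C_Z)\bigr)\,d\chi(Z).
\end{align*}

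Now the local analogue of Theorem~\ref{thm:DTpar} gives $[q^{n}t^{\gamma_Z}]\log\DT^{\rm{par}}(\mu,C_Z)=(-1)^{\gamma_Z\cdot H-1}(\gamma_Z\cdot H)N_{n,\gamma_Z}$, and since $\gamma_Z\cdot H=\beta\cdot H$ and $i_{Z\ast}\gamma_Z=\beta$, the hypothesis rewrites this as $(-1)^{\beta\cdot H-1}(\beta\cdot H)\sum_{k\mid(n,\gamma_Z)}k^{-2}N_{1,\gamma_Z/k}$. It then remains to check
\begin{align*}
\int_{\Chow_{\beta}(X)}\sum_{k\ge1,\,k\mid(n,\gamma_Z)}k^{-2}N_{1,\gamma_Z/k}\,d\chi(Z)=\sum_{k\ge1,\,k\mid(n,\beta)}k^{-2}N_{1,\beta/k},
\end{align*}
which follows from the same Chow decomposition applied to the invariants $N_{1,\beta/k}$ --- these are coprime, hence honestly $N_{1,\beta/k}=\int_{\Chow_{\beta/k}(X)}N_{1,\gamma_W}\,d\chi(W)$ --- together with the observation that for each $k$ the locus $\{Z\in\Chow_{\beta}(X):k\mid\gamma_Z\}$ is identified with $\Chow_{\beta/k}(X)$ by $W\mapsto kW$, under which $\gamma_{kW}/k=\gamma_W$. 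Combining with Proposition~\ref{prop:mult:para} gives formula (\ref{form:mult:N}) for $(n,\beta)$.

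The main obstacle is the second step: making precise that extracting the coefficient of $q^{n}t^{\beta}$ commutes with the $\chi$-integrations over the Chow varieties and with the logarithm, i.e.\ that the $\chi$-integration formulas for the individual $\DT^{\rm{par}}_{n_i,\beta_i}$ can be assembled across the terms of the log-expansion. This is tractable precisely because we have reduced to a single coefficient: only finitely many lengths $\ell$, finitely many decompositions $\vec{\beta}$ of $\beta$, and finite-type Chow varieties occur, and on the fibers of the summation morphism every integral degenerates to a finite sum. The remaining ingredients --- the bookkeeping of the divisor $H$ across the global and local theories, the $H$-independence of the $N$'s, and the local analogues of Theorem~\ref{thm:DTpar} and Remark~\ref{enough:check} asserted in the excerpt --- are routine or are taken as black boxes.
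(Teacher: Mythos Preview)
Your proposal is correct and follows essentially the same route as the paper's own proof: fix a single $H$ as in Lemma~\ref{lem:trans}, integrate over the Chow variety to show that both $\widehat{\DT}^{\rm par}_{n,\beta}$ and the multiple-cover sum $\widehat{N}_{n,\beta}$ are obtained as $\chi$-integrals of their local counterparts (the former via the summation morphism $\prod_i\Chow_{\beta_i}(X)\to\Chow_{\beta}(X)$, the latter via the bijection $\Chow_{\beta/k}(X)\to\{Z:k\mid\gamma_Z\}$), and then apply the local hypothesis pointwise together with $\gamma_Z\cdot H=\beta\cdot H$. The only cosmetic differences are that the paper takes $d>\omega\cdot\beta$ rather than $d=\omega\cdot\beta$, and spells out the log-expansion computation explicitly rather than packaging it as a pushforward.
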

\begin{proof}
We take $d>\omega \cdot \beta$ and 
a divisor $H \subset X$ as in Lemma~\ref{lem:trans}.
We set $\mu=n/\omega \cdot \beta$, 
and consider the invariant 
$\widehat{\DT}_{n, \beta}^{\rm{par}} \in \mathbb{Q}$
as in (\ref{DTtilde}). 
Also we set $\widehat{N}_{n, \beta} \in \mathbb{Q}$ to be
\begin{align}\label{Ntilde}
\widehat{N}_{n, \beta}
\cneq 
\sum_{k\ge 1, k|(n, \beta)} \frac{1}{k^2}
N_{1, \beta/k}
\end{align}
By Remark~\ref{enough:check}, 
it is enough to show the formula, 
\begin{align}\label{wtilde}
\widehat{\DT}^{\rm{par}}_{n, \beta}
=(-1)^{\beta \cdot H -1}(\beta \cdot H)\widehat{N}_{n, \beta}. 
\end{align}
Let $\gamma \in \Chow_{\beta}(X)$ be a one
cycle on $X$ 
and $C_{\gamma} \subset X$ the reduced curve defined by 
$C_{\gamma}=\Supp(\gamma)$. 
By our choice of $H$, the curve $C_{\gamma}$ intersects with $H$
transversally. 
Let us consider the generating 
series $\mathrm{DT}^{\rm{par}}(\mu, C_{\gamma})$ and 
its logarithm. We can similarly define 
$\widehat{\DT}^{\rm{par}}_{n, \gamma} \in \mathbb{Q}$
by the $q^n t^{\gamma}$-coefficient 
of $\log \mathrm{DT}^{\rm{par}}(\mu, C_{\gamma})$.
Also by replacing $\beta$ by $\gamma$ 
in (\ref{Ntilde}), we can similarly define 
$\widehat{N}_{n, \gamma}$.

Let us consider the 
 assignments $\gamma \mapsto 
\widehat{\DT}^{\rm{par}}_{n, \gamma}, \widehat{N}_{n, \gamma}$. 
They determine constructible functions on $\Chow_{\beta}(X)$, 
\begin{align*}
\widehat{\DT}^{\rm{par}}_{n, \ast}
&\colon \Chow_{\beta}(X) \ni \gamma \mapsto 
\widehat{\DT}^{\rm{par}}_{n, \gamma} \in \mathbb{Q}, \\
\widehat{N}_{n, \ast} &\colon \Chow_{\beta}(X) \ni \gamma 
\mapsto \widehat{N}_{n, \gamma} \in \mathbb{Q}. 
\end{align*}
We consider the integrations of these
constructible functions over $\Chow_{\beta}(X)$. 
First note that, by the definition of local parabolic stable pair 
invariant in Definition~\ref{loc:inv},
we have 
\begin{align}\label{int:1}
\mathrm{DT}^{\rm{par}}_{n, \beta}
=\int_{\gamma \in \Chow_{\beta}(X)}
\mathrm{DT}^{\rm{par}}_{n, \gamma}
d \chi. 
\end{align} 
Therefore we have 
\begin{align}\notag
&\int_{\gamma \in \Chow_{\beta}(X)}
\widehat{\DT}^{\rm{par}}_{n, \gamma}
d\chi \\
\notag
&= \int_{\gamma \in \Chow_{\beta}(X)}
\sum_{l\ge 1} \frac{(-1)^{l-1}}{l}
\sum_{\begin{subarray}{c}
(\gamma_i, n_i) \in H_2(C_{\gamma}, \mathbb{Z})\oplus \mathbb{Z}, 
1\le i \le l, \\
(\gamma_1, n_1) + \cdots +(\gamma_l, n_l)=(\gamma, n), \\
n_i=\mu(\omega \cdot \gamma_i)
\end{subarray}}
\prod_{i=1}^{l} \mathrm{DT}_{n_i, \gamma_i}^{\rm{par}} d\chi \\
\notag
&= \sum_{l\ge 1} \frac{(-1)^{l-1}}{l}
\sum_{\begin{subarray}{c}
(\beta_i, n_i) \in H_2(X, \mathbb{Z})\oplus \mathbb{Z}, 
1\le i \le l, \\
(\beta_1, n_1) + \cdots +(\beta_l, n_l)=(\beta, n), \\
n_i=\mu(\omega \cdot \beta_i)
\end{subarray}} 
\int_{\{\gamma_i\}_{i=1}^{l} \in
\prod_{i=1}^{l} \Chow_{\beta_i}(X)}
\prod_{i=1}^{l} \mathrm{DT}_{n_i, \gamma_i}^{\rm{par}} d\chi \\
\notag
&= \sum_{l\ge 1} \frac{(-1)^{l-1}}{l}
\sum_{\begin{subarray}{c}
(\beta_i, n_i) \in H_2(X, \mathbb{Z})\oplus \mathbb{Z},
1\le i\le l, \\
(\beta_1, n_1) + \cdots +(\beta_l, n_l)=(\beta, n), \\
n_i=\mu(\omega \cdot \beta_i)
\end{subarray}} 
\prod_{i=1}^{l} \mathrm{DT}_{n_i, \beta_i}^{\rm{par}} \\
\label{last:eq}
&= \widehat{\DT}^{\rm{par}}_{n, \beta}. 
\end{align}
Here we have used (\ref{int:1}) in the third equation. 

Next, we consider the integration 
of  
the function $\widehat{N}_{n, \ast}$. 
For $a \in \mathbb{Z}_{\ge 1}$, we set 
$\Chow_{\beta}^{(a)}(X) \subset \Chow_{\beta}(X)$ to be 
\begin{align*}
\Chow_{\beta}^{(a)}(X) \cneq 
\{ \gamma \in \Chow_{\beta}(X) : 
\divv(\gamma, n)=a\}, 
\end{align*}
where $\divv(\ast)$ is the divisibility of the vector $\ast$. 
By setting $e=\divv(\beta, n)$, 
we have 
\begin{align}\notag
&\int_{\gamma \in \Chow_{\beta}(X)}
\widehat{N}_{n, \gamma} d\chi \\
\notag
&= \sum_{a \ge 1} \int_{\gamma \in \Chow_{\beta}^{(a)}(X)}
\sum_{k\ge 1, k|a}\frac{1}{k^2}
N_{1, \gamma/k} d\chi \\
\notag
&= \sum_{k\ge 1, k|e}
\frac{1}{k^2}
\sum_{a\ge 1, k|a|e} \int_{\gamma \in \Chow_{\beta}^{(a)}(X)}
N_{1, \gamma/k} d\chi \\
\label{div:k}
&=\sum_{k\ge 1, k|e} \frac{1}{k^2}
\int_{\gamma' \in \Chow_{\beta/k}} N_{1, \gamma'} d\chi \\
\label{eq:4}
&=\sum_{k\ge 1, k|e} \frac{1}{k^2}
N_{1, \beta/k} \\
\label{eq:5}
&= \widehat{N}_{n, \beta}. 
\end{align}
Here (\ref{div:k})
follows from the set theoretic bijection
for $k|e$,  
\begin{align*}
\Chow_{\beta/k}(X) \ni \gamma' \mapsto 
k\gamma' \in \bigcup_{a\ge 1, k|a|e} \Chow_{\beta}^{(a)}(X). 
\end{align*}
Also (\ref{eq:4}) follows from (\ref{N_n}).

Now we use the assumption 
for the local theory. 
It just implies the equality, 
 \begin{align}\label{wtilde2}
\widehat{\DT}^{\rm{par}}_{n, \gamma}
=(-1)^{\gamma \cdot H -1}(\gamma \cdot H)\widehat{N}_{n, \gamma},
\end{align}
for any $\gamma \in \Chow_{\beta}(X)$. 
By (\ref{wtilde2}), (\ref{last:eq}), (\ref{eq:5})
and noting $\gamma \cdot H= \beta \cdot H$, 
 the equality (\ref{wtilde}) follows.  
\end{proof}
As a corollary, we have the following. 
\begin{cor}
Suppose that Conjecture~\ref{conj:multloc} is true for any 
reduced curve $C\subset X$. 
Then Conjecture~\ref{conj:mult2} holds. 
\end{cor}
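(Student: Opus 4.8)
The plan is to deduce the corollary directly from Proposition~\ref{loc:global}, together with the reformulation in Remark~\ref{enough:check}. Fix an arbitrary pair $n \in \mathbb{Z}$ and $\beta \in H_2(X, \mathbb{Z})$; since Conjecture~\ref{conj:mult2} is precisely the assertion that the formula (\ref{form:mult:N}) holds for every such pair, it suffices to establish (\ref{form:mult:N}) for this fixed $(n, \beta)$. By Proposition~\ref{loc:global}, this in turn follows as soon as we verify that the local formula (\ref{mult:locform}) holds for every reduced curve $i \colon C \hookrightarrow X$ and every one cycle class $\gamma \in H_2(C, \mathbb{Z})$ with $i_{\ast}\gamma = \beta$.

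To carry this out, I would first observe that for each such $C$ and $\gamma$, the class $\gamma$ lies in $H_2(C, \mathbb{Z})_{>0}$ (it pushes forward to the effective class $\beta$ and $C$ is reduced), so that $(n, \gamma)$ belongs to the range $\mathbb{Z} \oplus H_2(C, \mathbb{Z})$ to which Conjecture~\ref{conj:multloc} applies. The hypothesis of the corollary is exactly that Conjecture~\ref{conj:multloc} holds for the curve $C$, i.e. that (\ref{mult:locform}) holds for \emph{all} $(n, \gamma) \in \mathbb{Z} \oplus H_2(C, \mathbb{Z})$, in particular for those $\gamma$ with $i_{\ast}\gamma = \beta$. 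Hence the hypothesis of Proposition~\ref{loc:global} is met, and we conclude that $N_{n, \beta}$ satisfies (\ref{form:mult:N}). As $(n, \beta)$ was arbitrary, Conjecture~\ref{conj:mult2} follows.

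I do not expect a genuine obstacle here: all of the real content --- the choice of $H$ as in Lemma~\ref{lem:trans}, the Chow-variety integration comparing the constructible functions $\gamma \mapsto \widehat{\DT}^{\mathrm{par}}_{n, \gamma}$ and $\gamma \mapsto \widehat{N}_{n, \gamma}$, and the (local and global) product expansion of Theorem~\ref{thm:DTpar} --- is already packaged inside Proposition~\ref{loc:global}, so the corollary is a formal consequence of that proposition. The one step worth spelling out carefully, and the reason the statement is not quite a tautology, is that the proof of Proposition~\ref{loc:global} uses (\ref{mult:locform}) for the reduced curves $C_{\gamma} = \Supp(\gamma)$ with $\gamma \in \Chow_{\beta}(X)$ and for \emph{all} cycle classes occurring in a decomposition of $\beta$ supported on $C_{\gamma}$; since Conjecture~\ref{conj:multloc} is quantified over all of $\mathbb{Z} \oplus H_2(C_{\gamma}, \mathbb{Z})$, it supplies exactly these instances, and nothing more needs to be checked.
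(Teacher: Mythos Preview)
Your proposal is correct and matches the paper's approach: the paper states the corollary without proof, treating it as an immediate consequence of Proposition~\ref{loc:global}, which is exactly what you do. Your careful verification that the hypothesis of Proposition~\ref{loc:global} is supplied by the universal quantification in Conjecture~\ref{conj:multloc} is the only thing that needs saying, and you say it.
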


\begin{rmk}\label{rmk:final}
For $\gamma \in \Chow_{\beta}(X)$, 
let us take the curve $C_{\gamma}=\Supp(\gamma)$
and its normalization, 
\begin{align*}
\widetilde{C}_{\gamma} \to C_{\gamma}. 
\end{align*}
As discussed in~\cite[Lemma~2.11]{Todmu},
the invariant $N_{n, \gamma}$ vanishes 
unless $\widetilde{C}_{\gamma}$
is a disjoint union of $\mathbb{P}^1$. 
The idea for the proof is as follows:
suppose for simplicity that $C_{\gamma}$
is a smooth curve of positive genus, 
 $C_{\gamma} \subset U \subset X$ 
be a sufficiently small analytic 
neighborhood of $C_{\gamma}$ in $X$, 
and $\Pic^0(U)$ the group of line bundles on $U$
which restricts to degree zero line bundles 
on $C_{\gamma}$. The group $\Pic^{0}(U)$ acts 
on the moduli space which defines the invariant 
$N_{n, \gamma}$. 
We can also find a subgroup $S^1 \subset \Pic^0(U)$
whose induced action on the above moduli 
space is free. Hence $N_{n, \gamma}=0$ follows
by the localization argument. 

Now suppose that $\widetilde{C}_{\gamma}$ is 
a union of $\mathbb{P}^1$,  
$C_{\gamma}$ has at worst nodal singularities, and 
the arithmetic genus of $C_{\gamma}$ is positive. 
In this case, the group 
$\Pic^{0}(U)$ contains $\mathbb{C}^{\ast}$, 
so we may try to localize by this action. 
In this case, there may be $\mathbb{C}^{\ast}$-fixed 
sheaves supported on $C_{\gamma}$.  
However it is not easy to investigate the 
contribution of $\epsilon_{0, \gamma, n}$ 
at the $\mathbb{C}^{\ast}$-fixed sheaf, 
and the above localization argument is not 
obvious in this case. 
 
In~\cite{Todmu}, instead of
the invariant $N_{n, \gamma}$, we 
apply the 
$\mathbb{C}^{\ast}$-localization to the parabolic 
stable pair invariant 
$\DT_{n, \gamma}^{\rm{par}}$. 
The moduli space $M_n^{\rm{par}}(C_{\gamma}, \gamma)$
also admits the $\mathbb{C}^{\ast}$-action, 
(while the moduli space of PT or JS stable pairs 
do not,) and the definition of $\mathrm{DT}_{n, \gamma}^{\rm{par}}$ 
does not require the technique on Hall algebras. 
There is no technical difficulty in 
applying the localization on $M_n^{\rm{par}}(X, \gamma)$, 
and can investigate the contribution of 
$\mathbb{C}^{\ast}$-fixed parabolic stable pairs 
to the invariant $\mathrm{DT}^{\rm{par}}_{n, \gamma}$. 
It will turn out that these localization argument is 
relevant to show Conjecture~\ref{conj:multloc} in some cases,
and these details will be pursued in~\cite{Todmu}. 
\end{rmk}

Institute for the Physics and 
Mathematics of the Universe, 

Todai Institute for Advanced Studies (TODIAS), 
University of Tokyo,

5-1-5 Kashiwanoha, Kashiwa, 277-8583, Japan.

\textit{E-mail address}: yukinobu.toda@ipmu.jp

\end{document}